\documentclass[12pt,twoside]{amsart}

\usepackage{amssymb,amsmath}
\setlength{\baselineskip}{1.25\baselineskip}

\theoremstyle{plain}
\newtheorem{theorem}{Theorem}[section]
\newtheorem{lemma}[theorem]{Lemma}

\theoremstyle{definition}
\newtheorem{remark}[theorem]{Remark}
\newtheorem{definition}[theorem]{Definition}

\numberwithin{equation}{section}




  \input xypic
  \xyoption{all}

 \usepackage{tikz}
\usetikzlibrary{arrows}

\setlength{\textwidth}{5.8in} \setlength{\textheight}{8.0in}
\hoffset=-0.45truein \voffset=0.1truein

 \usepackage{times}

  \usepackage{amsmath, amsthm, amsfonts}

\def \fkz{\mathfrak{z}}

\def \df{\square}
\def \mc{\mathcal}
\def \inv{^{-1}}

\def \cplane{\mathbb{C}}
\def \mff{\mathsf}
\def \indexn{^{(k)}}

\def \half{\frac{1}{2}}

\def \bb {\mathbb}
\def \b{\bar }

\def \p{\partial}

\def \halfplane{\mathsf{h}^\ast}

 \def\CHART{\mathsf{U}}
 
\def\t{\mathfrak{t}}

              \def \bb {\mathbb}
\def \p{\partial}

\def \v {\vskip 0.1in}

\def \df{\square}
\def \mc{\mathcal}
\def \inv{^{-1}}

\def \v{\vskip 0.1in}

\def \cplane{\mathbb{C}}
\def \indexn{^{(k)}}

\def \half{\frac{1}{2}}

\def \bb {\mathbb}
\def \b{\bar }

\def \p{\partial}

\def \halfplane{\mathsf{h}^\ast}
\def \indexm {_{k} }

 \def\CHART{\mathsf{U}}

\def\t{\mathfrak{t}}
\begin{document}

\title[Prescribed Scaler Curvatures for Homogeneous Toric Bundles]
{Prescribed Scaler Curvatures\\ for Homogeneous Toric Bundles}

\author[Chen]{Bohui Chen}
     \address{Department of Mathematics, Sichuan University, Chengdu, 610064, China}
     \email{bohui@cs.wisc.edu}
\author[Han]{Qing Han}
\address{Department of Mathematics,
University of Notre Dame,
Notre Dame, IN 46556, USA}
\email{qhan@nd.edu}
\address{Beijing International Center for Mathematical Research,
Peking University,
Beijing, 100871, China} \email{qhan@math.pku.edu.cn}
\author[Li]{An-Min Li}
\address{Department of Mathematics,
Sichuan University,
 Chengdu, 610064, China}
\email{anminliscu@126.com}
\author[Lian]{Zhao Lian}
\address{Department of Mathematics,
Sichuan University,
Chengdu, 610064, China}
\author[Sheng]{Li Sheng} 
\address{Department of Mathematics,
Sichuan University,
Chengdu, 610064, China}
\email{lshengscu@gmail.com}
\thanks{Chen acknowledges the support of NSFC Grant .\\
${}\quad\ $Han acknowledges the support of NSF
Grant DMS-1404596.\\
${}\quad\ $Li acknowledges the support of NSFC Grant NSFC11521061.\\
${}\quad\ $Sheng acknowledges the support of NSFC Grant NSFC11471225.}

{\abstract
In this paper, we study the generalized Abreu equation on a Delzant ploytope $\Delta \subset \mathbb{R}^2$ and
prove the existence of the constant scalar metrics
of homogeneous toric bundles under the assumption of an appropriate stability.
\endabstract
}

\maketitle


\section{Introduction}\label{sec-Introduction}

It is an important question to study whether there exist K\"ahler metrics of
constant scalar curvatures, or more general, the extremal metrics, in given K\"ahler classes.
The underlying equation is a fourth-order or sixth-order nonlinear complex elliptic equation for
potential functions. The general form of such an equation seems out of reach at the
present time.

In a series of papers \cite{D-1}, \cite{D2}, \cite{D3}, and \cite{D4}, Donaldson studied
this problem on toric manifolds and proved the existence
of metrics of constant scaler curvatures on toric surfaces
under an appropriate stability condition. Later on in \cite{CLS-1} and \cite{CLS-2},
Chen, Li and Sheng proved the existence of metrics of prescribed scaler curvatures
on toric surfaces under the uniform stability condition.

Toric manifolds are a class of K\"ahler manifolds which admit a $(\mathbb C^*)^n$-action
with a dense, fixed-point free orbit and hence admit moment maps with images given by
convex polytopes in $\mathbb R^n$. On toric manifolds, invariant K\"ahler metrics
can be characterized by their symplectic potentials, which are smooth strictly convex
functions on the polytopes. As a result, the fourth-order nonlinear complex elliptic equation
mentioned above is reduced to a fourth-order nonlinear real elliptic equation,
referred to as the Abreu equation.

In \cite{D5}, Donaldson suggested to study the existence K\"ahler metrics
of constant scalar curvatures on toric fibrations. He presented
the underlying equation, which we call the generalized Abreu equation. In this paper we show that the method developed by Chen, Li and Sheng can be used to study   constant scalar curvatures  on homogeneous toric bundles.

The main result in this paper is following theorem, concerning the existence of metrics of prescribed scalar curvatures
on homogeneous toric bundles.

\begin{theorem}\label{theorem_1.1}
Let $(M,\omega)$ be a compact toric surface and $\Delta$ be its
Delzant polytope. Let $G/K$ be a generalized flag manifold with $dim(Z(K))=2$ and
$G\times_KM$ be the homogeneous toric bundle. Let $\mathbb{D}>0$ be the
Duistermaat-Heckman polynomial, and $A\in C^{\infty}(\bar{\Delta})$ be a given smooth function.
Suppose that $\mathbb D$ is an
edge-nonconstant function on $\bar\Delta$ and
$(\Delta,\mathbb{D}, A)$ is uniformly stable. Then, there is a smooth
$(G,  T^{2})$-invariant metric $\mathcal{G}$ on $G\times_{K}M$
such that the scalar curvature of $\mathcal{G}$ is $\mathbb{S}=A+h_G$.
\end{theorem}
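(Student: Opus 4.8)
The strategy is to reduce the geometric problem to a fourth-order PDE on $\Delta$ --- the generalized Abreu equation with the Duistermaat--Heckman weight $\mathbb{D}$ --- and to solve this equation by adapting the variational/continuity scheme of Chen, Li and Sheng \cite{CLS-1}, \cite{CLS-2}, with $\mathbb{D}$ playing the role of the (constant) density on a toric surface. \textbf{Step 1 (reduction).} Exactly as in the toric case, a $(G,T^{2})$-invariant K\"ahler metric $\mathcal{G}$ on $G\times_K M$ in the fixed class corresponds to a \emph{symplectic potential}: a smooth strictly convex function $u$ on $\Delta$ with the Guillemin-type boundary behaviour determined by $(\Delta,\mathbb{D})$. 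One checks that the scalar curvature of $\mathcal{G}$ equals $A+h_G$ precisely when $u$ solves
\[
 -\sum_{i,j=1}^{2}\frac{\partial^{2}\big(\mathbb{D}\,u^{ij}\big)}{\partial x_{i}\,\partial x_{j}}=\mathbb{D}\,A\qquad\text{on }\Delta ,
\]
where $(u^{ij})$ is the inverse of the Hessian $(u_{ij})$, the term $h_G$ accounting for the part of the scalar curvature coming from $G/K$. Thus it suffices to produce such a $u$.

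\textbf{Step 2 (variational set-up).} On the space of normalized convex functions on $\bar\Delta$ (say, those vanishing together with their gradient at a fixed interior point), introduce
\[
 \mathcal{F}_{A,\mathbb{D}}(u)=-\int_{\Delta}\mathbb{D}\,\log\det(u_{ij})\,dx+\mathcal{L}_{A,\mathbb{D}}(u),\qquad
 \mathcal{L}_{A,\mathbb{D}}(u)=\int_{\partial\Delta}\mathbb{D}\,u\,d\sigma-\int_{\Delta}\mathbb{D}\,A\,u\,dx ,
\]
whose formal Euler--Lagrange equation is the equation of Step 1. Uniform stability of $(\Delta,\mathbb{D},A)$ is, by definition, the inequality $\mathcal{L}_{A,\mathbb{D}}(u)\ge\lambda\int_{\partial\Delta}u\,d\sigma$ for some $\lambda>0$ and all such $u$; this forces $\mathcal{F}_{A,\mathbb{D}}$ to be bounded below and coercive. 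I would then run a continuity method: deform the right-hand side from the Guillemin model (for which a solution is explicit) to $\mathbb{D}A$, obtain openness from the implicit function theorem applied to the linearized fourth-order elliptic operator, and reduce closedness to a priori estimates for the solutions $u_t$ along the path --- equivalently, one may work directly with a minimizing sequence for $\mathcal{F}_{A,\mathbb{D}}$.

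\textbf{Step 3 (a priori estimates).} This is the heart of the proof. From the lower bound on $\mathcal{F}_{A,\mathbb{D}}$ and the stability inequality one extracts a uniform $C^{0}$ bound on the normalized potentials and a lower bound on $\int_{\Delta}\mathbb{D}\,\log\det(u_{ij})\,dx$. On compact subsets of the open polytope, $\mathbb{D}$ and $1/\mathbb{D}$ are bounded, so Donaldson's interior estimate for the Abreu equation goes through essentially unchanged and yields two-sided bounds on $\det(u_{ij})$ and then on $(u_{ij})$, hence local $C^{\infty}$ bounds by bootstrapping the fourth-order equation. The substantial work is the estimate up to $\partial\Delta$: near the relative interior of an edge one performs an affine change of variables and reduces to a weighted one-dimensional problem, where the \emph{edge-nonconstant} hypothesis on $\mathbb{D}$ keeps the reduced equation non-degenerate; near a vertex one rescales and rules out collapse. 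The only way compactness can fail is that the normalized potentials converge to a convex limit that is affine along some face of $\Delta$; one shows such a limit makes $\mathcal{L}_{A,\mathbb{D}}$ nonpositive on the limiting configuration, contradicting uniform stability --- and it is the edge-nonconstancy of $\mathbb{D}$ that supplies the required strict sign there.

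\textbf{Step 4 (conclusion).} With uniform $C^{0}$ bounds and two-sided Hessian bounds, interior and up to the boundary, the family $\{u_t\}$ (or the minimizing sequence) subconverges in $C^{\infty}_{\mathrm{loc}}$ to a smooth strictly convex $u_\infty$ solving the generalized Abreu equation together with the Guillemin boundary conditions; the associated $(G,T^{2})$-invariant metric $\mathcal{G}$ on $G\times_K M$ then has scalar curvature $A+h_G$. I expect the main obstacle to be Step 3 --- the boundary estimates near the corners of $\Delta$ and the accompanying non-degeneration argument --- which is precisely where both the uniform stability and the edge-nonconstancy of $\mathbb{D}$ are used in an essential way.
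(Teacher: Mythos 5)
Your high-level scheme (reduce to the generalized Abreu equation, run a continuity path in the right-hand side, get $C^{0}$ bounds from uniform stability, interior estimates from the interior regularity theory, and close by boundary estimates) does match the paper's architecture. But two things in Step 3 do not survive contact with the actual proof.

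First, your explanation of where edge-nonconstancy of $\mathbb{D}$ is used is wrong. You say it ``supplies the required strict sign'' in the stability functional on a limiting affine configuration. That strict sign is already supplied by the uniform stability constant $\lambda$; edge-nonconstancy has nothing to do with it. In the paper the hypothesis is a purely technical device: near each edge $\ell$ one rewrites the equation in complex coordinates adapted to the divisor $Z_\ell$ and obtains $-\square\log\mathbb{F}_\ell=\mathbb{A}_\ell$, where $\mathbb{A}_\ell=A+\sum_k \partial_{\xi_k}(\log\mathbb{D})\,b^1_k$ picks up a first-order drift from $\mathbb{D}$. The Ricci-curvature estimate near the divisor (Theorem \ref{theorem_5.0.1}, the analogue of the toric affine blow-up argument) requires a \emph{lower} bound $|\mathbb{A}_\ell|\ge\delta>0$ on a neighbourhood of the edge. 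Edge-nonconstancy of $\mathbb{D}$ guarantees that one can choose an affine shear of the $(\xi^\ell_1,\xi^\ell_2)$-coordinates so that $\mathbb{A}_\ell$ is edge-nonvanishing (Remark \ref{remark_9.1}); without it, $\mathbb{A}_\ell$ could have a zero on the edge and the curvature estimate would fail. So the hypothesis enters at the very first analytic step of the boundary regularity, not at the end in a stability contradiction.

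Second, the boundary estimates are substantially more involved than ``an affine change of variables and a weighted one-dimensional problem'' near edges, or ``rescale and rule out collapse'' near vertices. Near an edge the paper needs: (i) comparability of Euclidean and Calabi-geodesic balls (Theorem \ref{theorem_5.3.3}), which in turn rests on the determinant estimates of Lemmas \ref{lemma 2.5}--\ref{lemma_2.8} and the strict convexity of the restriction of $u$ to the edge (Lemma \ref{lemma_5.2.1}); (ii) a curvature bound $(\mathcal{K}+\mathbb{K})\,d_u^2\le C$ near the divisor proved by affine blow-up on the Calabi metric, where the extra term $\mathbb{K}$, coming from the $G/K$-directions of the metric $\mathcal{G}_f$, has no analogue on a toric surface and has to be controlled separately (Theorem \ref{theorem_5.0.1}, Lemma \ref{lemma_7.16a}); (iii) two-sided bounds on $H=\det(g_{i\bar j})/\det(f_{i\bar j})$ (Theorems \ref{theorem 6.1} and \ref{theorem_7.0.1}); and (iv) a convergence lemma through the drift term $\frac{1}{2}\sum\partial_{\xi_i}(\log\mathbb{D})\,\mathrm{Re}[z_i\partial_{z_i}\log(W\mathbb{D})]$ that has no counterpart in the Abreu equation (Lemma \ref{lemma_4.1}). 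Near a vertex one constructs a $G$-invariant subharmonic barrier $P+Q+\mathsf{C}_9 f_\vartheta$ on the full preimage $G(\Omega_\vartheta)$ in the bundle, using the Laplacian of the \emph{total} K\"ahler metric on $G\times_K M$ (Lemma \ref{lemma_8.2.1}), then pushes the bound down to $\Omega_\vartheta$ by invariance; the normalization condition \eqref{equ_R_3.6} on the position of $\Delta$ is needed for the sign of $\square(\log\mathbb{F}_\vartheta+Nf_\vartheta)$. None of this is visible from ``reduce to a 1D problem'' or ``rescale.'' You have the right skeleton but have mislocated the role of the crucial hypothesis and underestimated where the real analytic work is.
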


Theorem \ref{theorem_1.1} provides an affirmative answer to
the Yau-Tian-Donaldson conjecture for the homogeneous toric bundle
in the case $\mathbb{S}=constant$ and $n=2$.

Refer to Section \ref{sec-Prliminaries} for various notations and terminology, in particular,
the expressions of $\mathbb{D}$ and $h_G$ in \eqref{eqn_D2.7}
and the notions of the uniform stability
in Definition \ref{definition_1.5} and the edge-nonconstant functions in Definition
\ref{definition_1}.

The underlying equation is the following fourth-order PDE:
\begin{equation}\label{eqn-GeneralizedAbreu}
-\frac{1}{\mathbb {D}}\sum_{i,j=1}^n\frac{\partial^2 (\mathbb {D}u^{ij})}{\partial \xi_i\partial \xi_j}
=\mathbb{S}-h_G\quad\text{in }\Delta,
\end{equation}
where $\mathbb{S}$ is the scalar curvature function on $\Delta$ and
$\mathbb {D}$ and $h_G$ are two known functions which are determined by the underling geometry,
with $\mathbb {D}$ strictly positive in $\bar\Delta$. On toric varieties,
we have $\mathbb {D}\equiv1$ and $h_G\equiv0$, and the equation  \eqref{eqn-GeneralizedAbreu}
reduces to the Abreu equation.
In consistence with toric varieties, we write $A=\mathbb{S}-h_G$.

The equation \eqref{eqn-GeneralizedAbreu} was introduced by Donaldson \cite{D5}
in the study of scalar curvatures of toric fibrations. See also \cite{R} and \cite{N-1}.
We call \eqref{eqn-GeneralizedAbreu} the {\it generalized Abreu equation}.
Our aim in this paper is to solve \eqref{eqn-GeneralizedAbreu} in Delzant polytopes for solutions
satisfying the Guillemin boundary condition.

We first note a simple fact. Even if the scalar curvature $\mathbb S$ is constant, the right-hand side
of the equation \eqref{eqn-GeneralizedAbreu} is a function in general, due to the presence of $h_G$.
The difference between the Abreu equation and the generalized Abreu equation seems minor, with
two functions $\mathbb D$ and $h_G$ inserted in the Abreu equation. However, the difference
in the underlying geometry is significant and, as a consequence, the study of the generalized Abreu
equation has become more complicated.

We first review studies of the Abreu equation for dimension 2, with functions in the right-hand side. Donaldson
derived an $L^\infty$-estimate as well as the interior regularity.
Chen, Li and Sheng derived regularity near boundary by a series of techniques
such as affine blowup, differential inequalities on affine invariant quantities and the Bernstein theorem.
They derived regularity near edges and near vertices by different methods. The proof of
the regularity near edges consists of three steps.
First, they derived an estimate of Ricci curvatures near edges
in geodesic balls in terms of the  determinants of the Hessian of solutions. Second, they derived a lower bound of
geodesic distances. Third, they derived estimates of a ratio of two determinants related to the Hessian of solutions.
By combining these three steps, they obtain regularity near edges.
To obtain regularity near vertices, it is crucial to construct subharmonic functions in
the preimages of neighborhoods of vertices by moment maps.

Now we turn our attention to the generalized Abreu equation
\eqref{eqn-GeneralizedAbreu} in $\Delta$.
For $\Delta\subset \mathbb R^2$,
Nyberg \cite{N-1} derived an $L^\infty$-estimate as well as the interior regularity for
toric fibrations.
For general dimension, Li, Lian, and Sheng \cite{LLS} proved that the uniform stability is the
necessary condition for the solvability of the generalized Abreu equation
and implies the interior regularity.
Some estimates and differential inequalities were established in
\cite{LLS-1} and \cite{LSZ}.

Our main concern in this paper is the boundary regularity for the generalized Abreu equation
\eqref{eqn-GeneralizedAbreu}. We will use methods from
\cite{CLS-1} and \cite{CLS-2} to the present setting. Some generalizations from the Abreu equation to
the generalized Abreu equation are straightforward, while others are technical and involve
more complicated calculations, mostly due to the presence of the function
$\mathbb D$ in \eqref{eqn-GeneralizedAbreu}.

The form of \eqref{eqn-GeneralizedAbreu} is hard to analyze. We need to
write it in its equivalent forms. 
The Abreu equation is given by \eqref{eqn-GeneralizedAbreu}  with $\mathbb D=1$ and $h_G=0$.
In this case, we can discuss $u=u(\xi)$ or its Legendre transform $f=f(x)$. The Hessian of $u$ or $f$
plays an important role. In fact,
$[\det(u_{ij})]^{-1}$ satisfies a linearized Monge-Amp\`ere equation
in coordinates $(\xi_1, \cdots, \xi_n)$ and $\log\det(f_{ij})$  also
satisfies a similar linearized Monge-Amp\`ere equation
in coordinates $(x_1, \cdots, x_n)$. We can consider any one of these equations depending on
our purposes. However, we do not have such options
for the generalized Abreu equation \eqref{eqn-GeneralizedAbreu}.
For \eqref{eqn-GeneralizedAbreu},
the Hessian is replaced by $\mathbb D\det(f_{ij})$, which satisfies nice equations
in $(\xi_1, \cdots, \xi_n)$.  The function
$\log[\mathbb D\det(f_{ij})]$ satisfies a linear equation with lower order term in coordinates $(x_1, \cdots, x_n)$.
A similar equation also holds in complex manifolds. Based on these equations, we can
discuss convergence of sequences of $f_k$ in complex manifolds.

The blowup process is only different. Under
appropriate assumptions, we can conclude that
$\frac{\p\log\mathbb D_k}{\p \xi_{i}}$ converges to
zero as $k\to \infty$. Then the limits go back to the Abreu equation. 

Estimates near vertices are different significantly.
For toric varieties, we need to estimate in preimages $U$ of vertices by moment maps.
Now for homogeneous toric bundles, similar estimates need to be established in
$G(U)$, under the group action. It is more complicated to construct subharmonic functions there.

We will follow  \cite{CLS-1} and \cite{CLS-2} closely
in our study of homogeneous toric bundles. Some estimates were
already established in \cite{LLS}, \cite{LLS-1}, and \cite{LSZ}. In this paper,
we will focus only on the difference caused by the presence of $\mathbb D$
in \eqref{eqn-GeneralizedAbreu}.

The paper is organized as follows. In Section \ref{sec-Prliminaries},
we briefly review homogeneous toric bundles and introduce the generalized
Abreu equation. We also review some basic estimates for the generalized
Abreu equation. In Section \ref{sect_5}, we discuss estimates of geodesic distances near
the boundary of polytopes. In Section \ref{sect_EstimateK},
we estimate Ricci curvatures near divisors. In Section \ref{sect_6} and Section \ref{sect_7}, we discuss
an upper bound and a lower bound of a function $H$, respectively.
In Section \ref{sect_Convergence}, we prove
a convergence theorem and we only point out the difference. In Section \ref{sec_8}, we
discuss regularity near boundary and finish the proof of Theorem \ref{theorem_1.1}.
Sections  \ref{sect_5},  \ref{sect_6} and \ref{sect_7} are similar to the corresponding part in \cite{CLS-2}.
We  only present main estimates. For other sections, we provide a little more
detailed discussions and emphasize the difference from the corresponding part in
\cite{CLS-1} and  \cite{CLS-2}.


\section{Preliminaries}\label{sec-Prliminaries}

In this section,
we briefly review homogeneous toric bundles and introduce the generalized
Abreu equation. We also review some basic estimates for the generalized
Abreu equation.

\subsection{Homogeneous Toric Bundles}\label{toric-fibration}

We recall some facts about homogeneous toric bundles and refer to \cite{PS},
\cite{Ar} and \cite{D5} for details.
Let $G$ be a compact semisimple Lie group, $K$ be the centralizer of a torus $S$ in $G$,
and $T$ be a maximal torus in $G$ containing $S$.
Then, $T\subset C(S)=K$ and $G/K$ is a generalized flag manifold.
Denote $\mathfrak{o}=eK$.  Let  $\mathfrak{g}$ (resp.  $\mathfrak{k}$, $\mathfrak{h}$ )
be the Lie algebra of $G$ (resp. $K$, $T$) and $B$ denote the Killing form of $\mathfrak{g}$.
Recall that $-B$ is a positive definite inner product on $\mathfrak{g}$.
There is an orthogonal decomposition with respect to $-B$ given by
$$\mathfrak{g}=\mathfrak{k}\oplus \mathfrak{m},$$  with $Ad(k)\mathfrak{m}\subset \mathfrak{m}$
for any $k\in K.$
The tangent space of $G/K$ at $\mathfrak{o}$ is identified with $\mathfrak{m}$.

Let $Z(K)$ be the center of $K$, which is an $n$-dimensional torus, denoted by $T^{n}$.
Let $(M,\omega)$ be a compact toric K\"{a}hler manifold of complex dimension $n$, where $T^{n}$
acts effectively on $M$. Let $\varrho: K\rightarrow T^{n}$ be a surjective homomorphism.
The homogeneous toric bundle $G\times_{K}M$ is defined to be the space $G\times M$ modulo the relation
$$(gh,x)=(g,\varrho(h)x)\quad\text{for any }g\in G, h\in K, x\in M.$$
Later on, we will omit $\varrho$ to simplify notations.
The space $G\times_{K}M$ is a fiber bundle with fiber $M$ and base space $G/K$,
a generalized flag manifold. There is a natural $G$-action on $G\times_{K}M$   given by
$$g\cdot[h,x]=[gh,x]
\quad\text{for any }g\in G, x\in M,$$
and a natural $T^{n}$-action on $G\times_{K}M$ given by
$$k\cdot[h,x]=[g, k^{-1}x]\quad\text{for any }k\in T^{n}.$$

Denote by $\mathfrak{g}^{\mathbb {C}},\mathfrak{k}^{\mathbb {C}}, \mathfrak{h}^{\mathbb {C}}$
the complexification of   $\mathfrak{g}$, $\mathfrak{k}$, $\mathfrak{h}$, respectively.
Let $R$ be the root system of $\mathfrak{g}^{\mathbb {C}}$ with respect to
$\mathfrak{h}^{\mathbb {C}}$,  we have
$$
\mathfrak{g}^{\mathbb {C}}=\mathfrak{h}^{\mathbb {C}}\oplus \sum_{\alpha\in R}\mathbb C E_{\alpha}.
$$
Then, $R$ admits a decomposition
$$R=R_{K}+R_{M}$$
so that $E_{\alpha}\in \mathfrak{k}^{\mathbb {C}}$ if $\alpha\in R_{K}$ and
$E_{\alpha}\in \mathfrak{m}^{\mathbb {C}}$ if $\alpha\in R_{M}.$
For any $\varphi\in \mathfrak{h}^{*}$, we define $h_{\varphi}\in \mathfrak{h}$ by
$$B(h, h_{\varphi})=\varphi(h)\quad\text{for any }h\in \mathfrak{h},$$
and set
$H_{\varphi}=\sqrt{-1}h_{\varphi}.$
Define
$\mathfrak{t}=\mathfrak{h}\bigcap \mathfrak{z}(\mathfrak{k}^{\mathbb{C}}).$
Consider the restriction map $\kappa:\mathfrak{h}^*\rightarrow \mathfrak{t}^*$ given by
$$\kappa( \alpha )= \alpha|_{\mathfrak{t}}.$$
Set $R_T=\kappa(R)=\kappa(R_{M})$. The elements of $R_T$ are called $T$-roots.

We fix an ordering and let $R^+$ (resp. $R_K^+$)  be the set of positive roots of $R$ (resp. $R_K$).
Set $R^+_M=R^+\setminus R_K^+$. Let $\{\tilde{ H}_{\alpha_1},\cdots, \tilde{H}_{\alpha_n}\}$
be a base of $\mathfrak{t}$. We choose a Weyl basis $e_{\alpha}\in \mathfrak{g}_{\alpha}^{\mathbb{C}}$
of $\mathfrak{g}^{\mathbb{C}}$
such that, for $e_{\alpha}\in \mathfrak{g}_{\alpha}$ and $e_{-\alpha}\in \mathfrak{g}_{-\alpha}$,
\begin{equation}\label{eqn_2.0}
B(e_{\alpha},e_{-\alpha})=1,\;\;\;[e_{\alpha},e_{-\alpha}]=h_{\alpha}.\end{equation}
Set  $$  V_{\alpha}=e_{\alpha}-e_{-\alpha}, \quad{W}_{\alpha}=\sqrt{-1}(e_{\alpha}+e_{-\alpha}).$$
It is easy to see that $  H_{\alpha},  V_{\alpha},  {W}_{\alpha} \in
(\mathfrak{g}_{\alpha}\oplus \mathfrak{g}_{-\alpha}\oplus [\mathfrak{g}_{\alpha},\mathfrak{g}_{-\alpha}])
\cap \mathfrak{g}.$
For any $\alpha\in  {R}^{+}$, we have
$$\alpha= \sum_{j=1}^{n+\ell} M_{\alpha}^{j}\alpha_{j},$$
with $M_{\alpha}^{j}\geq 0.$
Obviously,
\begin{equation}\label{eqn_4.6}
H_{\alpha}= \sum_{j=1}^{n+\ell} M_{\alpha}^{j}H_{\alpha_j}= \sum_{j=1}^{n} M_{\alpha}^{j}\tilde{H}_{\alpha_j}
+ \sum_{j=n+1}^{n+\ell} M_{\alpha}^{'j}H_{\alpha_j}.
\end{equation}
 For any $\alpha\in R_{M^{+}}$, by $M_{\alpha}^{j}\geq 0,$  we have
\begin{equation}
\sum_{j=1}^{n} M_{\alpha}^{j}>0.
\end{equation}

\subsection{The Generalized Abreu Equation}\label{sec-2.2}

For any $1\leq j\leq n$ and $\alpha\in R_M^{+}$,
let $\tilde{H}^*_{\alpha_j} $, $  V^*_{\alpha},  W^*_{\alpha}   $
be the fundamental vector fields  corresponding to  $\tilde{H}_{\alpha_j} $, $V_{\alpha}, W_{\alpha}$.
Then the left-invariant vector fields
$\{\frac{\p}{\p x_{j}}, \tilde{H}^*_{\alpha_j},V^*_{\alpha}, W^*_{\alpha}\}_{1\leq j\leq n,\alpha\in R_M^{+}}$
is a local basis of $G\times_{K}M$.
Let $\{dx_{j},\nu^{j}, dV^{\alpha},dW^{\alpha}\}_{1\leq j\leq n,\alpha\in  R_M^{+}}$
be the dual left-invariant 1-form of the basis.

Denote by $\tau:M\rightarrow \bar{\Delta}\subset \mathfrak{t}^*$
the moment map of $M$, where $\Delta$ is a Delzant polytope.
The left invariant 1-form $\{\nu^{1},\cdots, \nu^{n}\}$ can be seen as a basis of $\mathfrak{t}^{*}.$
The moment map $\tau: M \rightarrow  \mathfrak{t}^{*}$
has components relative to this basis of $\mathfrak{t}^{*}$, which we denote by $\tau_i$.
Note that $\sum_{i=1}^{n} \tau_{i}\nu^{i}$ is independent of the choice of the basis.

We fix a point $o\in \mathbb{R}^n$ and
identify $\mathfrak{t}^*$ with $T_{o}\mathbb{R}^{n}$. We view $o$ as
the origin of $\mathbb R^n$ and $\{\nu^{1},\cdots, \nu^{n}\}$ as
a  basis of $\mathbb{R}^n$.
Let $\xi=(\xi_1,...,\xi_n)$ be the coordinate system with respect to such a basis.
We choose
$$  \bar{\Delta}\subset \{(\xi_1,...,\xi_n)| \xi_1>0,\;\xi_2>0,\;...,\xi_n>0\}$$  such that
\begin{equation}\label{equ_R_3.6}
 \sum_{\alpha\in R_{M^{+}}}\frac{(\sum_{j=1}^{n} M_{\alpha}^{j})diam(\Delta)}{D_{\alpha}}<\frac{n}{4},
\end{equation}
where
$$D_{\alpha}:=2\sum_{i=1}^{n} \tau_{i}\nu^{i}=2\sum_{j=1}^{n} M_{\alpha}^{j}\xi_{j}>0\;\;\forall \;\xi\in \bar{\Delta}.$$
Since the moment map is equivariant, we can also regard $\tau$
as a map from $G\times_{K}M$ to $\mathfrak{t}^*$ and the components $\tau_{i}$ as functions on $G\times_KM$;
that is, we extend $\tau: G\times_{K}M\rightarrow \bar{\Delta}$ by $\tau([g,x])=\tau(x)$.

Suppose that $\Delta$ is defined by linear inequalities $h_k(\xi)-c_k>0$,  for $k=1, \cdots, d$,
where $c_k$ are constants and
$h_k$ are affine functions in $\mathbb  R^n$, $k=1, \cdots, d$, and each $h_k(\xi)-c_k=0$ defines a facet of $\Delta$.
Write $\delta_k(\xi)=h_k(\xi)-c_k$
and set
\begin{equation}\label{eqn 2.5}
v(\xi)=\sum_k\delta_k(\xi)\log\delta_k(\xi).
\end{equation}
It defines a K\"ahler metric on $G\times_{K}M$, which we call the {Guillemin} metric.
For any strictly convex function $u$ with $u-v\in C^{\infty}(\bar{\Delta})$,
we consider the K\"ahler metric $\mathcal{G}_{u}$ on $G\times_{K}M$ defined by
\begin{align*}
\mathcal G_{u}= \sum_{i,j=1}^{n}( u_{ij}d\xi_{i}\otimes d\xi_{j} + u^{ij}\nu^{i}\otimes \nu^{j})
+\sum_{\alpha\in R_{M^{+}}} D_{\alpha}(dV^{\alpha}\otimes dV^{\alpha}+dW^{\alpha}\otimes dW^{\alpha}),
\end{align*}
where $u_{ij}=\frac{\p^2 u}{\p \xi_i \p \xi_j} $ and $(u^{ij})$ is the inverse matrix of $(u_{ij})$. Let $f$ be the Legendre transformation of $u$, i.e.,
$$
x^{i}=\frac{\p u}{\p \xi_{i}},\;\;\;f=\sum_{j=1}^{n} x^{j}\xi_{j}-u.
$$
In term of $x$ and $f$, the metric can be written as
\begin{align*}
\mathcal G_{f}= \sum_{i,j=1}^{n}(f_{ij}dx^{i}\otimes dx^{j} + f_{ij}\nu^{i}\otimes \nu^{j})
+\sum_{\alpha\in R_{M^{+}}} D_{\alpha}(dV^{\alpha}\otimes dV^{\alpha}+dW^{\alpha}\otimes dW^{\alpha}),
\end{align*}
where $f_{ij}=\frac{\p^2 f}{\p x^i \p x^j}.$
One can check that $f$ is the potential function of the metric $\mathcal G_{f}.$

Set
$$S_{j}=\frac{1}{2}(\frac{\p}{\p x_{j}}-\sqrt{-1} H_{j}^*),\;\;\;
S_{\alpha}=\frac{1}{2}(  V^*_{\alpha}-\sqrt{-1}  W^*_{\alpha}),\;\;\;j\leq m, \;\alpha\in R_{M^{+}}.$$
As in \cite{LSZ} (see also \cite{PS}), the Ricci curvatures are given by
\begin{align}\label{eqn_Ric_4.7}
&Ric (S_{j},\bar S_{k})=-\frac{1}{4}\frac{\p^2 log\mathbb F_{\Delta}}{\p x_{j}\p x_{k} },\;\;\;
Ric (S_{\alpha},\bar S_{k})=
Ric  (S_{j},\bar S_{\alpha})=0,\;\;\; \\\label{eqn_Ric_4.8}
& Ric (S_{\alpha},\bar S_{\beta})=
\delta_{\alpha\beta}\left[ -\frac{1}{4}\sum f^{kl}\frac{\p D_{\alpha}}{\p x_{k}}\frac{\p log \mathbb F_{\Delta}}{\p x_{l} }
+ \frac{1}{4} \frac{\p D_{\alpha}}{\p \xi_{k}}\sigma_{k}\right],
\end{align}
where
$\mathbb{F}_{\Delta}=\left(\prod\limits_{\alpha\in R_{M^{+}}} D_{\alpha}\right)\cdot \det(f_{ij}),$
$\sigma $ is the sum of the positive roots of $R^+_M,$ and
$$\sigma_i=-2\sum_{\alpha\in R^{+}_M} \alpha(\sqrt{-1}\tilde {\mathcal H}_{i}).$$
Denote
 \begin{equation}\label{eqn_K1}
\mathbb K\;=\;\|Ric\|_{\mathcal G_{f}} +\|\nabla Ric\|_{\mathcal G_{f}}
^{\frac{2}{3}}+\|\nabla^2 Ric\|_{\mathcal G_{f}}
^{\frac{1}{2}}.
\end{equation}
 The scalar curvature of $\mathcal G_{u}$ is given by
\begin{equation}\label{eqn 4.6}
\mathbb{S}= -\frac{1}{\mathbb {D}}\sum_{i,j=1}^n\frac{\partial^2(\mathbb {D}u^{ij})}{\partial \xi_i\partial \xi_j} + h_G,
\end{equation}
where
\begin{equation}\label{eqn_D2.7}
\mathbb {D}= \prod_{\alpha\in R_{M^{+}}} D_{\alpha},\quad
h_G= \sum_{i=1}^{n} \sigma_{i}\frac{\partial \log \mathbb{D}}{\partial \xi_{i}}.\end{equation}
Here, $\mathbb{D}$ is called the Duistermaat-Heckman polynomial. Set $A=\mathbb{S}-h_G$.

We will consider the equation
\begin{equation}\label{eqn 2.7}
-\frac{1}{\mathbb {D}}\sum_{i,j=1}^n\frac{\partial^2(\mathbb {D}u^{ij})}{\partial \xi_i\partial \xi_j}=A,
\end{equation}
where $\mathbb{D}>0$ and $A$ are given functions defined on $\bar{\Delta}$.
The equation \eqref{eqn 2.7} was introduced by Donaldson \cite{D5}
in the study of scalar curvatures of toric fibrations. See also \cite{R} and \cite{N-1}.
We call \eqref{eqn 2.7} the generalized Abreu Equation.

\subsection{Uniform Stability}\label{sec-2.3}

We introduce several classes of functions. Set
\begin{align*}
\mc C&=\{u\in C(\bar\Delta):\, \text{$u$ is
convex on $\bar\Delta$ and smooth on $\Delta$}\},\\
\mathbf{S}&=\{u\in C(\bar\Delta):\, \text{$u$ is convex  on $\bar\Delta$
and $u-v$ is smooth on $\bar\Delta$}\},\end{align*}
where $v$ is given in \eqref{eqn 2.5}.
For a fixed
point $p_o\in \Delta$, we consider
\begin{align*}
{\mc C}_{p_o}&=\{u\in \mc C:\, u\geq u(p_o)=0\},\\
\mathbf{S}_{p_o}&=\{ u\in \mathbf S :\, u\geq u(p_o)=0\}.\end{align*}
We say functions in ${\mc C}_{p_o}$ and ${\mathbf{S}}_{p_o}$ are {\it normalized} at $p_o$.

Following \cite{N-1}, we consider
\begin{equation}\label{eqn 1.2}
\mc F_A(u)=-\int_\Delta \log\det(u_{ij}) \mathbb{D}d \mu+\mc L_A(u),
\end{equation}
and
\begin{equation}\label{eqn 1.3}
\mc L_A(u)=\int_{\partial\Delta}u \mathbb{D}d\sigma+\int_\Delta Au  \mathbb{D} d\mu,
\end{equation}
where $\mathbb{D}>0$ and $A$ are given smooth functions defined on $\bar{\Delta}$.
$\mc F_A$ is called the Mabuchi functional
and $\mc L_A$ is closely related to the Futaki invariants. The Euler-Lagrangian equation for $\mc F_A$ is
\eqref{eqn 2.7}. It is known that, if $u\in \mathbf{S}$ satisfies the equation \eqref{eqn 2.7},
then $u$ is an absolute minimizer for
$\mc F_A$ on $\mathbf{S}$.

\begin{definition}\label{definition_1.5}
Let $\mathbb{D}>0$ and $A$ be two smooth functions on $\bar\Delta$. Then,
$({\Delta},\mathbb{D}, A)$ is called {\it uniformly $K$-stable} if
the functional $\mc L_A$ vanishes on affine-linear functions and
there exists a constant $\lambda>0$
such that, for any $u\in  {\mc C}_{p_o}$,
\begin{equation}\label{eqn 1.6}
\mc L_A(u)\geq \lambda\int_{\partial \Delta} u \mathbb{D}d \sigma.
\end{equation}
We also say that $\Delta$ is
$(\mathbb{D},A,\lambda)$-stable.
\end{definition}

Li, Lian and Sheng \cite{LLS}  proved that
 \begin{itemize}
\item[(1)] if the equation \eqref{eqn 2.7} has a solution in $\mathbf{S}$, then $(\Delta, \mathbb{D}, A)$ is uniform K-stable,
\item[(2)] the uniform K-stability implies the interior regularity  for any dimension $n$.
\end{itemize}

\begin{remark}  Nyberg \cite{N-1} proved (2) for toric fibration and $n=2.$
\end{remark}

Donaldson \cite{D4} derived an $L^{\infty}$-estimate for the Abreu equation in
$\Delta \subset \mathbb{R}^2$. His method can be applied directly to the
generalized Abreu Equation $\Delta \subset \mathbb{R}^2$. See \cite{N-1}.

\begin{theorem}\label{theorem_3.2}
Let $\Delta\subset \mathbb{R}^2$ be a Delzant polytope and $\mathbb{D}>0$ and
$A$ be two smooth functions defined on $\bar\Delta$. Let $u\in C^{\infty}(\Delta)$ satisfy \eqref{eqn 2.7}.
Suppose that $\Delta$ is $(\mathbb{D},A,\lambda)$-stable. Then,
$$|\max\limits_{\bar \Delta} u-\min\limits_{\bar \Delta} u|\leq \mathsf C_o,$$
where
$\mathsf C_o$ is a positive constant depending on $\lambda$, $\Delta$, $\mathbb{D}$
and $\|A\|_{C^0}$.
\end{theorem}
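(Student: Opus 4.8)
The plan is to adapt Donaldson's $L^\infty$-estimate for the Abreu equation on polygons to the generalized setting, tracking the weight $\mathbb D$ throughout. The starting point is that $u$ solves the Euler--Lagrange equation for $\mc F_A$ on $\mathbf S$, hence by the remark following \eqref{eqn 1.3} it is an absolute minimizer of $\mc F_A$ on $\mathbf S$. First I would record the two-sided comparison: since $\mathbb D>0$ is smooth on $\bar\Delta$, it is bounded above and below by positive constants depending only on $\Delta$ and $\mathbb D$, so all the weighted integrals $\int_\Delta(\cdot)\mathbb D\,d\mu$ and $\int_{\partial\Delta}(\cdot)\mathbb D\,d\sigma$ are comparable to their unweighted counterparts. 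This reduces most of Donaldson's combinatorial geometry on $\Delta$ to the unweighted case.

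Next, the key normalization step. After subtracting an affine-linear function, which changes neither $\det(u_{ij})$ nor (because $\mc L_A$ vanishes on affine-linear functions) the relevant functional, we may assume $u$ attains its minimum $0$ at the fixed point $p_o\in\Delta$, so $u\in\mathbf S_{p_o}\subset\mc C_{p_o}$. The quantity to control is $\max_{\bar\Delta}u$, which by convexity and normalization is comparable to $\int_{\partial\Delta}u\,\mathbb D\,d\sigma$; the main point is to bound this boundary integral. The stability hypothesis \eqref{eqn 1.6} gives
\begin{equation*}
\lambda\int_{\partial\Delta}u\,\mathbb D\,d\sigma\le\mc L_A(u),
\end{equation*}
so it suffices to bound $\mc L_A(u)=\mc F_A(u)+\int_\Delta\log\det(u_{ij})\,\mathbb D\,d\mu$ from above. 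Using that $u$ minimizes $\mc F_A$, we have $\mc F_A(u)\le\mc F_A(v)$ for the Guillemin potential $v$ of \eqref{eqn 2.5}, and $\mc F_A(v)$ is a finite constant depending only on $\Delta$, $\mathbb D$ and $\|A\|_{C^0}$. Thus the whole estimate rests on producing an \emph{upper} bound for $\int_\Delta\log\det(u_{ij})\,\mathbb D\,d\mu$ in terms of $\int_{\partial\Delta}u\,\mathbb D\,d\sigma$ with a small constant in front, i.e. an inequality of the shape
\begin{equation*}
\int_\Delta\log\det(u_{ij})\,\mathbb D\,d\mu\le\varepsilon\int_{\partial\Delta}u\,\mathbb D\,d\sigma+C(\varepsilon,\Delta,\mathbb D),
\end{equation*}
after which choosing $\varepsilon<\lambda$ and combining with the two displays above closes the loop and yields $\max_{\bar\Delta}u\le\mathsf C_o$.

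The main obstacle is exactly that last inequality, which is Donaldson's concavity-of-$\log\det$ argument: one bounds $\log\det(u_{ij})$ along each line in a fixed direction by exploiting that $u$ restricted to a line is convex, integrates the one-dimensional estimate $\int\log u''\le$ (boundary data) slice by slice, and sums over a suitable family of directions determined by the edges of $\Delta$, controlling the edge contributions by $\int_{\partial\Delta}u\,d\sigma$ with an arbitrarily small coefficient at the cost of a large additive constant. In the weighted setting the only new feature is that $\mathbb D$ must be carried through these slicing integrals; since $\mathbb D$ and $\nabla\log\mathbb D$ are bounded on $\bar\Delta$, one integrates by parts the extra $\mathbb D$-factor and absorbs the resulting lower-order terms, so the structure of Donaldson's argument survives verbatim with constants now depending additionally on $\mathbb D$. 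This is the step referred to in the excerpt by the sentence ``His method can be applied directly,'' so I would present it briefly, emphasizing that no genuinely new idea is needed beyond the uniform positivity and smoothness of $\mathbb D$, and refer to \cite{D4} and \cite{N-1} for the slicing details.
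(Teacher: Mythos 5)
Your proposal follows the standard Donaldson argument, which is exactly what the paper relies on: the paper's own ``proof'' of Theorem~\ref{theorem_3.2} is the one-line remark that Donaldson's method ``can be applied directly'' together with a citation to \cite{D4} and \cite{N-1}. So you are filling in the skeleton of the same proof rather than taking a different route, and the logical chain --- minimizer property $\mathcal F_A(u)\le\mathcal F_A(v)$, stability inequality \eqref{eqn 1.6}, and a slicing estimate of the form $\int_\Delta\log\det(u_{ij})\,\mathbb D\,d\mu\le\varepsilon\int_{\partial\Delta}u\,\mathbb D\,d\sigma+C_\varepsilon$ with $\varepsilon<\lambda$ --- is correct and closes the loop as you describe.

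Two small technical remarks. First, the ``comparability'' reduction is slightly too quick: since $\log\det(u_{ij})$ is not sign-definite, bounding $\mathbb D$ above and below by positive constants does \emph{not} by itself give $\int_\Delta\log\det(u_{ij})\,\mathbb D\,d\mu\le C_{\mathbb D}\int_\Delta\log\det(u_{ij})\,d\mu$. One should either split into positive and negative parts, or (cleaner) simply rerun Donaldson's slicing argument with the weight $\mathbb D$ carried through; the slicing estimate survives because $\mathbb D$ is smooth and uniformly positive, and no integration by parts is needed --- that remark in your last paragraph is unnecessary. Second, subtracting an affine-linear function with nonzero linear part \emph{does} change $\max_{\bar\Delta}u-\min_{\bar\Delta}u$, so the normalization step silently assumes $u\in\mathbf S_{p_o}$; this is an imprecision shared with the statement of the theorem itself (which, as written, is invariant under adding affine functions and thus cannot hold without a normalization), and in context the intended hypothesis is clearly $u\in\mathbf S_{p_o}$.
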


\begin{definition}\label{definition_1}
Let $K$ be a smooth function on $\bar\Delta$. It is called {\it edge-nonconstant}
if it is not constant  on any edge of $\Delta$.  It is called {\em edge-nonvanishing}
if it does not vanish on any edge of $\Delta$.
\end{definition}

\subsection{Some Estimates for the Generalized Abreu Equation}\label{sec-2.4}

We adopt notations in \cite{CLS-1}, \cite{CLS-2}.
Let $(M,\omega_o)$ be a compact toric K\"{a}hler manifold of complex dimension $n$
and denote by $\tau:M\rightarrow \bar{\Delta}\subset \mathfrak{t}$ the moment map of $M$,
where $\Delta$ is a Delzant polytope.
Suppose that $\omega_o$ is the Guillemin metric with the local potential function
${\bf g}$. For any $T^n$-invariant metric $\omega\in [\omega_o]$ with the local potential
function ${\bf f}$, there is a function $\phi$ globally defined on $M$ such that
$${\bf f}={\bf g} + \phi.$$
Set
$$
\mc C^\infty(M,\omega_o)=\{\mathsf f|\mathsf f=\mathsf g+\phi,  \phi\in C^\infty_{\bb T^2}(M) \mbox{ and } \omega_{\mathsf f}>0\}.
$$
Fix a large constant $K_o>0$. We set
$$
\mc C^\infty(M,\omega_o;K_o)
=\{\mathsf{f}\in \mc C^\infty(M,\omega_o)|
|\mathcal{S}(f)|\leq K_o\}.
$$
Take a point $p_o\in \bar{\Delta}$. We choose coordinates $\xi_1,...,\xi_n$ such that $\xi(p_o)=0$.
Let $u=u(\xi_1,...,\xi_n)\in \mathbf{S}$ be a solution of \eqref{eqn 2.7}.
Set
$$x_i=\frac{\partial u}{\partial \xi_i},\;\;\; f=\sum_i x_i\xi_i - u,$$
and
$$w_i= x_i + \sqrt{-1}y_i,\;\;z_i=e^{w_i/2}.$$
In the case $p_o\in \p \Delta$, it is easy to show that the potential function $f$ can be
extended smoothly to the  divisors (see \cite{CLS-1} and \cite{CLS-2}). The Ricci curvature
and the scalar curvature of the K\"{a}hler metric
$\omega_f^{M}$ are given by
$$
R^{M}_{i\b j}= - \frac{\partial^{2}}{\partial z_{i}\partial \bar
z_{j}} \left(\log \det\left(f_{k\bar l}\right)\right), \;\;\;\mc
S^{M}=\sum f^{i\bar j}R_{i\b j},
$$
respectively. When we use the log-affine coordinates
on $\t$,   the Ricci curvature
and the scalar curvature of $\omega_f^{M}$ can be written as
$$
R^{M}_{i\b j}= - \frac{\partial^{2}}{\partial x_{i}\partial x_{j}}
\left(\log \det\left(f_{kl}\right)\right), \;\;\;\mc S^{M}=-\sum
f^{ij}\frac{\partial^{2}}{\partial x_{i}\partial x_{j}} \left(\log
\det\left(f_{kl}\right)\right).
$$
Define
\begin{equation}\label{eqn_1.1c}
\mc K\;=\;\|Ric^{M}\|_f +\|\nabla Ric^{M}\|_f
^{\frac{2}{3}}+\|\nabla^2 Ric^{M}\|_f
^{\frac{1}{2}}.
\end{equation}
Set
$$
\mathbb{F}_{\Delta}= {\mathbb D} [\det(u_{ij})]^{-1}= \mathbb D\det(f_{ij}).$$
We can rewrite \eqref{eqn 2.7} in the coordinates  $(x^{1},\cdots, x^{n})$ as
\begin{equation}\label{eqn 2.13}
-\sum_{i,j} f^{ij}(\log \mathbb F_{\Delta})_{ij} -\sum_{i,j}f^{ij} (\log \mathbb D)_{i}(\log \mathbb F_{\Delta})_{j} = A.
\end{equation}
On toric varieties, $\mathbb D\equiv0$ and then $\det(f_{ij})$ satisfies a linearized
Monge-Amp\`ere equation. For the present case, first-order terms appear in \eqref{eqn 2.13}.

Lemma 4.2 and Lemma 4.4 in \cite{CLS-2} and Proposition 3.9 and Proposition 4.2 in \cite{CLS-1}
can be extended to the
generalized Abreu equation. The following lemma is proved  in \cite{LLS}.

\begin{lemma}\label{lemma 2.5} Let $\Delta \subset \mathbb{R}^{n}$ be a Delzant polytope,
$\mathbb{D}>0$ and $A$
be smooth functions on $\bar\Delta$. Suppose that $u\in C^{\infty}(\Delta)$ satisfies
the generalized Abreu equation
\eqref{eqn 2.7} and that $\mathbb F=0$ on $\partial \Delta$. Let $E$ be a facet of
$\Delta$ given by $\xi_1=0$ and let $p\in E^o$.
Then, in a neighborhood of $p$,
$$ det(D^2u)\geq \frac{b}{\xi_1} $$
where $b$ is a positive constant depending only on $n$, $diam(\Delta)$,
$\max_{\bar\Delta}\mathbb D$, $\min_{\bar\Delta}\mathbb D$ and $\|A\|_{L^{\infty}(\Delta)}$.
\end{lemma}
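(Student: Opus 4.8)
The plan is to reduce the statement to an upper bound $\mathbb F_\Delta\le C\xi_1$ in a neighborhood of $p$, where $\mathbb F_\Delta=\mathbb D\,[\det(u_{ij})]^{-1}$; since $\min_{\bar\Delta}\mathbb D\le\mathbb D\le\max_{\bar\Delta}\mathbb D$, such a bound is equivalent to $\det(D^2u)=\mathbb D/\mathbb F_\Delta\ge b/\xi_1$. The starting point is to rewrite \eqref{eqn 2.7} as a linear equation for $\mathbb F_\Delta$. Writing $\mathrm{Cof}(D^2u)$ for the cofactor matrix of the Hessian, one has $\mathbb D u^{ij}=\mathbb F_\Delta\,\mathrm{Cof}_{ij}(D^2u)$, and the cofactor matrix of a Hessian is row- and column-wise divergence free; substituting this into \eqref{eqn 2.7} and discarding the vanishing first-order terms yields
\[
\operatorname{tr}\!\big(\mathrm{Cof}(D^2u)\,D^2\mathbb F_\Delta\big)=-\mathbb D A,\qquad\text{equivalently}\qquad \sum_{i,j}u^{ij}\,\partial_i\partial_j\mathbb F_\Delta=-A\,\mathbb F_\Delta .
\]
This is a linear, linearized Monge-Amp\`ere type equation with no lower-order drift, right-hand side $-\mathbb D A$ bounded in terms of $\max_{\bar\Delta}\mathbb D$ and $\|A\|_{L^\infty}$, and with $\mathbb F_\Delta\ge0$, $\mathbb F_\Delta=0$ on $\partial\Delta$. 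Two structural facts will be used: affine functions, in particular $\xi_1$, lie in the kernel of the operator; and, because $\det(D^2u)=\mathbb D/\mathbb F_\Delta$ is bounded below by a positive constant (the a priori bound on $\mathbb F_\Delta$ coming from Theorem \ref{theorem_3.2} together with the interior estimates of \cite{N-1}, \cite{LLS}), the coefficient matrix $\mathrm{Cof}(D^2u)$ is positive definite and $\operatorname{tr}\mathrm{Cof}(D^2u)\ge n\,(\det D^2u)^{(n-1)/n}$ is bounded below in terms of the same data.

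Next I would run a barrier and maximum-principle argument at an interior point $p$ of the facet $E=\{\xi_1=0\}$. Choose coordinates with $p=0$ and a small box $Q=\{\,0\le\xi_1\le\delta,\ |\xi_j-p_j|\le\delta\ \text{for }2\le j\le n\,\}\subset\bar\Delta$ meeting $\partial\Delta$ only along $E$. The aim is a function $\Phi$ on $Q$ that is a supersolution, $\operatorname{tr}(\mathrm{Cof}(D^2u)\,D^2\Phi)\le-\mathbb D A$, that dominates $\mathbb F_\Delta$ on $\partial Q$, and that satisfies $\Phi\le C\xi_1$ in a smaller box around $p$; the classical maximum principle for the genuinely elliptic operator $\operatorname{tr}(\mathrm{Cof}(D^2u)\,D^2\cdot)$ on $Q^{\circ}$ then forces $\mathbb F_\Delta\le\Phi$, hence $\mathbb F_\Delta\le C\xi_1$ near $p$, and finally $\det(D^2u)=\mathbb D/\mathbb F_\Delta\ge(\min_{\bar\Delta}\mathbb D)/(C\xi_1)=:b/\xi_1$. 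The principal term of $\Phi$ is $C\xi_1$: it is annihilated by the operator, vanishes on $E$ (matching $\mathbb F_\Delta=0$ there), and equals $C\delta$ on the face $\{\xi_1=\delta\}$, so a choice $C\sim(\text{bound for }\mathbb F_\Delta)/\delta$ handles that face. To dominate $\mathbb F_\Delta$ on the lateral faces of $Q$ (where $\xi_1$ may be tiny) and to absorb the bounded right-hand side, one adds concave corrections that blow up on the lateral faces (for instance of the form $-\varepsilon\log(\delta\pm(\xi_j-p_j))$) together with a term $-C'\xi_1^2$; using $\operatorname{tr}\mathrm{Cof}(D^2u)\ge n\,(\det D^2u)^{(n-1)/n}>0$ one checks that these corrections contribute a sufficiently negative amount to $\operatorname{tr}(\mathrm{Cof}(D^2u)\,D^2\Phi)$ to beat $-\mathbb D A$ at every point of $Q$, while not spoiling the comparison on $\partial Q$.

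The main obstacle, and the reason this is not immediate, is that the operator degenerates as $\xi\to\partial\Delta$: its coefficient matrix $(u^{ij})$ is the inverse of a Hessian whose determinant blows up, so a single comparison on a box of fixed size only yields a sub-linear, H\"older type decay $\mathbb F_\Delta\le C\xi_1^{\gamma}$ with $\gamma<1$. Improving $\gamma$ up to the sharp exponent $1$ requires either a bootstrap iteration on the exponent — re-running the barrier on boxes of size comparable to $\xi_1$, each step using the decay from the previous one on the lateral faces — or an appeal to the boundary regularity theory for linearized Monge-Amp\`ere equations, which applies here because $\det(D^2u)$ is pinched between two positive constants and the boundary portion $E$ is flat with zero Dirichlet data. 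Making all the constants depend only on $n$, $\mathrm{diam}(\Delta)$, $\max_{\bar\Delta}\mathbb D$, $\min_{\bar\Delta}\mathbb D$ and $\|A\|_{L^\infty}$, rather than on the particular solution $u$, is precisely the point where the a priori $L^\infty$-estimate and the interior estimates must be fed in; I expect this uniform, sharp-rate version of the barrier construction to be the technical heart, while the algebraic reduction of the first step and the qualitative maximum principle are routine.
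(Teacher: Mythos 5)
The paper does not actually prove Lemma~\ref{lemma 2.5}; it cites it as ``proved in \cite{LLS}.'' So there is no in-paper argument to compare against line by line, and I will instead assess your proposal on its internal merits and against the standard approach for the Abreu-equation analogue.

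Your opening reduction is correct and is indeed the standard first step: since $\mathbb D u^{ij}=\mathbb F_\Delta U^{ij}$ with $U=\operatorname{Cof}(D^2u)$ divergence-free, equation~\eqref{eqn 2.7} becomes $\sum U^{ij}(\mathbb F_\Delta)_{ij}=-\mathbb D A$, and the target is $\mathbb F_\Delta\le C\xi_1$ near $p$. Where the proposal breaks down is in the barrier itself. The correction terms you propose, $-\varepsilon\log(\delta\pm(\xi_j-p_j))$, are \emph{convex}, not concave (their second derivative in $\xi_j$ is $+1/(\delta\mp(\xi_j-p_j))^2>0$), so $L$ applied to them is $+\varepsilon U^{jj}/(\cdots)^2\ge 0$: they push $L\Phi$ in the \emph{wrong} direction for a supersolution. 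Flipping the sign restores concavity but makes the term $\to-\infty$ on the lateral faces, so it no longer dominates $\mathbb F_\Delta$ there. This is not a cosmetic issue: the operator $L$ degenerates precisely so as to make ``concave, yet large on the lateral boundary, yet $O(\xi_1)$ near $p$'' mutually incompatible for a single fixed box, which is exactly the obstruction you honestly flag. Flagging the obstruction, however, does not resolve it; the iteration and the appeal to linearized Monge--Amp\`ere boundary regularity are left as gestures rather than steps, and it is not clear that a bootstrap of H\"older exponents converges to the sharp linear rate without additional structural input.

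A second, independent problem is the source of your a priori bounds. You invoke Theorem~\ref{theorem_3.2} to control $\mathbb F_\Delta$ (equivalently to bound $\det D^2u$ from below, so that $\operatorname{tr}\operatorname{Cof}(D^2u)$ is controlled). But Theorem~\ref{theorem_3.2} requires uniform $(\mathbb D,A,\lambda)$-stability, is stated only for $n=2$, and produces a constant depending on $\lambda$, whereas Lemma~\ref{lemma 2.5} is stated for general $n$, does not assume any stability, and asserts that $b$ depends only on $n$, $\operatorname{diam}(\Delta)$, $\max_{\bar\Delta}\mathbb D$, $\min_{\bar\Delta}\mathbb D$ and $\|A\|_{L^\infty}$. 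So this ingredient is not available, and even if it were, Theorem~\ref{theorem_3.2} bounds $\operatorname{osc}(u)$, not $\det D^2u$, so an extra (unstated) argument would still be needed.

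What is missing from the proposal, and what the cited argument for the toric case relies on, is the integral identity obtained by pairing $\partial_i\partial_j(\mathbb D u^{ij})=-\mathbb D A$ against test functions vanishing on $\partial\Delta$, together with convexity (e.g.\ AM--GM in the form $\sum u^{ij}\phi_{ij}\ge n(\det\phi_{ij}/\det u_{ij})^{1/n}$) to exploit the vanishing of $\mathbb F_\Delta$ on all of $\partial\Delta$ rather than only on $E$. That is a different mechanism from a pointwise box barrier, it does not require any stability input, and it is what permits constants that depend only on the data listed in the lemma. As written, your proof has the right starting equation but not a closable estimate: the barrier as stated has the wrong concavity, the lateral-face control is unresolved, and the invoked a priori bound is unavailable. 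I would classify this as a genuine gap rather than a complete alternative proof.
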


The following results are proved in \cite{LLS-1}.

\begin{lemma}\label{lemma_2.8} Let $\Delta\subset \mathbb R^2$ be a Delzant ploytope, $\mathbb{D}>0$ and $A$
be smooth functions on $\bar\Delta$. Suppose that $u\in \mathbf{S}_{p_o}$ satisfies the generalized Abreu equation
\eqref{eqn 2.7}, and suppose that there are two constants
$b,d>0$ such that
\begin{equation}\label{eqn_2.0.a}
\frac{\sum \left(\frac{\partial u}{\partial \xi_k}\right)^2}{(d+
f)^2}\leq b,\;\;\;\; d+f\geq 1
\end{equation} where  $f$ is the Legendre function of $u.$
Then,
$$\frac{\det(\partial^2_{ij}u)}{(d+f)^4}(p) \leq \frac{b_0}{ d_E(p, \partial
\Delta)^{4}},$$
then $b_0$ is a positive constant depending $diam(\Delta)$, $\max_{\bar\Delta}\mathbb D$,
$\min_{\bar\Delta}\mathbb D$ and $\|A\|_{L^{\infty}(\Delta)}$.
\end{lemma}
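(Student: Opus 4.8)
The plan is to argue by contradiction through an affine blow-up, reducing the rescaled problem to the \emph{homogeneous} Abreu equation on $\mathbb R^2$ where a Bernstein theorem applies; this follows the scheme of Proposition~3.9 and Proposition~4.2 of \cite{CLS-1} and Lemma~4.2 and Lemma~4.4 of \cite{CLS-2}, carried over to \eqref{eqn 2.7}, and is essentially the argument of \cite{LLS-1}. Write the conclusion as
$$
\det(\partial^2_{ij}u)(p)\cdot d_E(p,\partial\Delta)^{4}\ \le\ b_0\,(d+f(p))^{4}.
$$
The key observation is that the two sides of this inequality, the equation \eqref{eqn 2.7}, and the conditions \eqref{eqn_2.0.a} and $u\in\mathbf{S}_{p_o}$ are essentially invariant under the rescalings $u(\xi)\mapsto\mu\,u(\xi/\lambda)$ together with suitable affine changes of the $\xi$-variable: the density $\mathbb D$ then becomes a constant multiple of $\mathbb D(\xi/\lambda)$, while the right-hand side $A$ picks up a factor $\mu^{-1}$. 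So, if the lemma were false, there would be Delzant polytopes $\Delta_k\subset\mathbb R^2$, positive polynomials $\mathbb D_k$ and smooth $A_k$ with $\operatorname{diam}(\Delta_k)$, $\max_{\bar\Delta_k}\mathbb D_k$, $\min_{\bar\Delta_k}\mathbb D_k$, $\|A_k\|_{C^0}$ and the constants $b,d$ all fixed, solutions $u_k\in\mathbf{S}_{p_o}$ of \eqref{eqn 2.7} satisfying \eqref{eqn_2.0.a}, and points $p_k\in\Delta_k$ at which the scale-invariant quantity $Q_k:=\det(\partial^2_{ij}u_k)\,d_E(\cdot,\partial\Delta_k)^4/(d+f_k)^4$ has $Q_k(p_k)\to\infty$; since $d_E\le\operatorname{diam}(\Delta_k)$ and $d+f_k\ge1$, this forces $\det(\partial^2_{ij}u_k)(p_k)\to\infty$, and one may take $p_k$ to nearly maximize $Q_k$ over an exhaustion of $\Delta_k$.

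Next I would normalize: translate $p_k$ to the origin, rescale by $\lambda_k\to\infty$ and multiply by a scalar $\mu_k\to\infty$ (composed with a linear map supplied by John's lemma applied to a sublevel set of $u_k$) so that for the rescaled solutions $\widetilde u_k$ of the generalized Abreu equation on $\widetilde\Delta_k$ the Hessians $\partial^2_{ij}\widetilde u_k(0)$ are comparable to the identity and $\widetilde A_k\to0$ locally uniformly. Because $d+f_k\ge1$ and $\det(\partial^2_{ij}u_k)(p_k)\to\infty$, the Euclidean balls about $0$ contained in $\widetilde\Delta_k$ have radii tending to $\infty$, so $\widetilde\Delta_k$ exhausts $\mathbb R^2$. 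Moreover each $\mathbb D_k=\prod_\alpha D_\alpha$ is a product of affine functions whose logarithmic gradients are bounded on $\bar\Delta_k$ in terms of $\operatorname{diam}(\Delta_k)$, $\max_{\bar\Delta_k}\mathbb D_k$, $\min_{\bar\Delta_k}\mathbb D_k$ and the normalization \eqref{equ_R_3.6}; hence after rescaling $\partial\log\widetilde{\mathbb D}_k/\partial\xi_i\to0$ and $(\log\widetilde{\mathbb D}_k)_{ij}\to0$ locally uniformly, so the rescaled form \eqref{eqn 2.13} of the equation degenerates, in the limit, to the homogeneous Abreu equation on $\mathbb R^2$.

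To justify the passage to the limit I would invoke the interior a priori estimates and interior regularity for \eqref{eqn 2.7} from \cite{LLS}, \cite{LLS-1}, \cite{LSZ} (the extensions of Lemma~4.2, Lemma~4.4 of \cite{CLS-2} and Proposition~3.9, Proposition~4.2 of \cite{CLS-1}), together with the facet lower bound $\det(D^2u)\ge b/\xi_1$ of Lemma~\ref{lemma 2.5}, which prevents the affine (Calabi) metrics $\sum\widetilde u_{k,ij}\,dx^idx^j$ from collapsing on compact sets; the hypothesis \eqref{eqn_2.0.a} controls the growth of the rescaled potentials and keeps the blow-up non-degenerate. Along a subsequence $\widetilde u_k\to u_\infty$ in $C^\infty_{loc}$, where $u_\infty$ is a smooth, strictly convex solution of the homogeneous Abreu equation on all of $\mathbb R^2$ with $\partial^2_{ij}u_\infty(0)=\delta_{ij}$. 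The Bernstein theorem for the homogeneous Abreu equation in dimension two (in the form used in \cite{CLS-1}, \cite{CLS-2}; cf. Trudinger--Wang and Li--Jia) then forces $u_\infty$ to be a quadratic polynomial; but for a quadratic polynomial $Q$ is bounded by a constant depending only on $b$, $d$, $\operatorname{diam}$ and $\mathbb D$, which contradicts $Q_k(p_k)\to\infty$ together with the near-maximality of $p_k$. This contradiction proves the lemma, and $b_0$ depends only on the listed quantities because only those enter the compactness argument.

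\emph{The main obstacle} is the convergence step: after the affine normalization one must rule out degeneration of the rescaled solutions on compact sets — that is, obtain uniform two-sided bounds on $\det(\partial^2_{ij}\widetilde u_k)$ and uniform interior $C^m$ bounds — and simultaneously verify that the first-order terms $\sum f^{ij}(\log\mathbb D)_i(\log\mathbb F_\Delta)_j$ in \eqref{eqn 2.13}, absent from the Abreu equation, really do vanish in the blow-up. This is precisely where the presence of $\mathbb D$ makes the analysis heavier than in \cite{CLS-1}, \cite{CLS-2}, and where Lemma~\ref{lemma 2.5} and the hypothesis \eqref{eqn_2.0.a} are indispensable.
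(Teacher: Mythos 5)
The paper itself does not prove this lemma; it defers to \cite{LLS-1} with the remark ``The following results are proved in \cite{LLS-1}.'' Even so, the shape of the statement is a strong tell for a Pogorelov-type maximum-principle argument in the Donaldson and Chen--Li--Sheng style rather than a blow-up. The hypothesis $\sum(\partial u/\partial\xi_k)^2/(d+f)^2\le b$ is precisely the quantity one places in the exponent of an auxiliary function of the form
$$
G=\exp\!\left(-\gamma\,\frac{\sum(\partial_k u)^2}{(d+f)^2}\right)\,d_E(\cdot,\partial\Delta)^4\,\frac{\det(u_{ij})}{(d+f)^4}.
$$
By Lemma~\ref{lemma 2.5} and the Guillemin boundary behavior of $u\in\mathbf S$, one has $\det(D^2u)\ge b/\xi_1$ while $d_E^4\sim\xi_1^4$ near a facet, so $G\to0$ on $\partial\Delta$ and $G$ attains an interior maximum; there one applies the linearized Monge--Amp\`ere operator $u^{ij}\partial^2_{ij}$ and the generalized Abreu equation \eqref{eqn 2.7}, and only $L^\infty$ data of $A$ and $\mathbb D$ ever enter the resulting differential inequality. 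This also explains why $b_0$ depends only on $\|A\|_{L^\infty}$, $\operatorname{diam}(\Delta)$ and the bounds on $\mathbb D$: no compactness or higher-order convergence is used.

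Your blow-up/Bernstein route is therefore a genuinely different approach, and it has gaps that do not close. First, the claim that $d+f_k\ge1$ plus $\det(\partial^2_{ij}u_k)(p_k)\to\infty$ forces the rescaled domains to exhaust $\mathbb R^2$ is unjustified: the rescaled distance to the boundary is $\lambda_k\,d_E(p_k,\partial\Delta_k)$, and $\lambda_k$ is determined by the Hessian normalization at $p_k$, not by $d_E(p_k,\partial\Delta_k)$. Ruling out $p_k\to\partial\Delta_k$ fast enough to spoil the exhaustion is precisely the content of the inequality you are trying to prove, so assuming it is circular. Second, passing to a $C^\infty_{loc}$ limit needs interior a priori estimates for the generalized Abreu equation, and in the Chen--Li--Sheng/\cite{LLS-1} framework those are themselves built on determinant bounds of exactly this Pogorelov type, so invoking them as input is a second circularity. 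Third, you never actually use \eqref{eqn_2.0.a} in the limit --- you only assert that it ``keeps the blow-up non-degenerate,'' which is not a proof; in the maximum-principle argument that bound is what makes the differential inequality close, which is a further indication that the intended proof is of that form. Your remarks about $\partial\log\mathbb D_k/\partial\xi_i\to0$ after rescaling and the reduction of \eqref{eqn 2.13} to the homogeneous Abreu equation are fine as far as they go, and this is the right mechanism for the curvature estimate Theorem~\ref{theorem_5.0.1}, but it is not the mechanism behind Lemma~\ref{lemma_2.8}.
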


\begin{theorem}\label{theorem_5.1}
Let $\Omega^*\subset \mathbb{R}^2$ be a  normalized domain. Let $u\indexm\in \mc
F(\Omega^*,1)$ be a sequence of functions and $p^o\indexm$ be
the minimal point of $u\indexm$. Let $\mathbb D_{k}>0$ be
a given smooth function defined on $\overline{\Omega}^*.$
Suppose that there is $\mff N_{1}>0$ such that
$$|\mathcal{S}_{\mathbb{D}_{k}}(u\indexm)|\leq \mff N_{1},\;\;\;\;\mff N_{1}^{-1} \leq \mathbb D_{k}\leq \mff N_{1}$$
and
$$
 \sup_{\Omega^*}|\nabla_{\xi}\log \mathbb D_{k}|\leq \mff N_{1}.
$$
Then, up to subsequences,
$u\indexm$ locally uniformly  converges to a function $u_\infty$ { in $\Omega^*$} and
$p_{k}^o$ converges to $p^o_\infty$ such that $$d_E(p^o_{\infty},\partial\Omega^*)>\mff s$$
for some constant $\mff s>0$ and in
$D_\mff s(p_{\infty}^o)$
$$
\|u\|_{C^{3,\alpha}}\leq\mff  C_1
$$
for some  $\mff C_1>0$ and $\alpha\in (0,1)$.
\end{theorem}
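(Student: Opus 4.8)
The plan is to follow the proof of the corresponding convergence theorem in \cite{CLS-1} and \cite{CLS-2}, tracking the modifications forced by the factor $\mathbb D_k$; the only genuinely new ingredient is the behaviour of $\log\mathbb D_k$ under the blow-up. \emph{Step 1 (compactness).} Membership of $u\indexm$ in $\mc F(\Omega^*,1)$ encodes the normalization $u\indexm\ge u\indexm(p^o\indexm)=0$ at the minimal point together with the standard a priori bound on $u\indexm$ over the normalized domain $\Omega^*$, so convexity gives equi-Lipschitz bounds on compact subsets of $\Omega^*$. By the Blaschke selection theorem (equivalently Arzel\`a--Ascoli for convex functions) we pass to a subsequence with $u\indexm\to u_\infty$ locally uniformly in $\Omega^*$, $u_\infty$ convex, and $p^o\indexm\to p^o_\infty\in\overline{\Omega^*}$. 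It remains to pin $p^o_\infty$ a definite distance inside $\Omega^*$ and to promote the convergence near it to $C^{3,\alpha}$.

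\emph{Step 2 (the limiting minimal point is interior).} Suppose, toward a contradiction, that $d_E(p^o\indexm,\partial\Omega^*)\to0$ along the subsequence. Carry out an affine blow-up at $p^o\indexm$: let $T_k$ be affine maps normalizing a sequence of sublevel sets of $u\indexm$ collapsing onto $p^o_\infty$ and put $\tilde u\indexm=\lambda_k^{-1}\,u\indexm\circ T_k$ for suitable constants $\lambda_k$. Then $\tilde u\indexm$ satisfies the generalized Abreu equation \eqref{eqn 2.7} with coefficient $\tilde{\mathbb D}_k=\mathbb D_k\circ T_k$ and rescaled right-hand side $\tilde A_k$. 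The two-sided bound $\mathsf N_1^{-1}\le\tilde{\mathbb D}_k\le\mathsf N_1$ is scale-invariant, $\tilde A_k\to0$ on compact sets, and --- this is the essential use of $\sup_{\Omega^*}|\nabla_\xi\log\mathbb D_k|\le\mathsf N_1$ --- since the blow-up expands Euclidean distances one has $|\nabla_\xi\log\tilde{\mathbb D}_k|\to0$ locally. Hence the first-order term in \eqref{eqn 2.13} drops out in the limit and $\tilde u_\infty$ solves the classical Abreu equation on a convex subdomain of $\mathbb R^2$; the Bernstein theorem for the Abreu equation employed in \cite{CLS-1} and \cite{CLS-2} forces $\tilde u_\infty$ to be an affine image of $|\xi|^2$, which is incompatible with the normalization of the collapsing sections used to define the blow-up. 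This yields a constant $2\mathsf s>0$ with $d_E(p^o\indexm,\partial\Omega^*)\ge2\mathsf s$ for large $k$, so $d_E(p^o_\infty,\partial\Omega^*)\ge2\mathsf s>\mathsf s$.

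\emph{Step 3 (Hessian-determinant bounds near $p^o\indexm$).} On $D_{2\mathsf s}(p^o\indexm)$ the $C^0$- and gradient bounds of Step 1, after enlarging the constant $d$, give $\sum_j(\p u\indexm/\p\xi_j)^2/(d+f)^2\le b$ and $d+f\ge1$ for the Legendre transform $f$ of $u\indexm$, so Lemma \ref{lemma_2.8} yields $\det(\p^2_{ij}u\indexm)\le C$ on $D_{\mathsf s}(p^o\indexm)$. A matching lower bound $\det(\p^2_{ij}u\indexm)\ge c>0$ on $D_{\mathsf s}(p^o\indexm)$ follows, as in \cite{CLS-1} and \cite{CLS-2}, by combining this upper bound with the interior estimates of \cite{LLS-1} and the normalization built into $\mc F(\Omega^*,1)$. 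Thus $\det(\p^2_{ij}u\indexm)$ lies in a fixed interval $[c,C]$ there.

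\emph{Step 4 (higher regularity and passage to the limit).} In the $\xi$-coordinates the function $\mathbb F_\Delta=\mathbb D_k[\det(\p^2_{ij}u\indexm)]^{-1}$ satisfies a linearized Monge--Amp\`ere equation obtained from \eqref{eqn 2.7}, whose data involve only $\mathbb D_k$, $\nabla_\xi\log\mathbb D_k$ and $A$, all bounded by $\mathsf N_1$; since $\det(\p^2_{ij}u\indexm)\in[c,C]$ on $D_{\mathsf s}(p^o\indexm)$ by Step 3, the Caffarelli--Guti\'errez interior estimates apply and give a uniform interior $C^\alpha$-bound for $\mathbb F_\Delta$, hence for $\det(\p^2_{ij}u\indexm)=\mathbb D_k/\mathbb F_\Delta$. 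Caffarelli's $C^{2,\alpha}$ estimate for the Monge--Amp\`ere equation $\det(\p^2_{ij}u\indexm)=\mathbb D_k/\mathbb F_\Delta$ with H\"older right-hand side then gives a uniform interior $C^{2,\alpha}$-bound for $u\indexm$, and one further bootstrap via Schauder estimates (the coefficients are now $C^\alpha$) upgrades this to $\|u\indexm\|_{C^{3,\alpha}(D_{\mathsf s}(p^o\indexm))}\le\mathsf C_1$ after shrinking $\mathsf s$ once. Since $D_{\mathsf s}(p^o\indexm)\to D_{\mathsf s}(p^o_\infty)$ these bounds pass to the limit, giving the asserted estimate for $u_\infty$ on $D_{\mathsf s}(p^o_\infty)$. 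The one delicate point is Step 2 --- verifying that the affine blow-up annihilates the $\mathbb D_k$-induced first-order term while keeping $\tilde{\mathbb D}_k$ uniformly bounded, so that the blow-up limit genuinely solves the classical Abreu equation; the remainder is a faithful, if computational, transcription of \cite{CLS-1} and \cite{CLS-2}.
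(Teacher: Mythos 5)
The paper does not prove this statement itself: the sentence preceding Lemma~\ref{lemma_2.8} and Theorem~\ref{theorem_5.1} reads ``The following results are proved in \cite{LLS-1},'' so both are imported from the preprint of Li, Lian and Sheng and no proof is given here. There is therefore no in-paper argument to compare yours against; I can only assess the sketch on its own terms.

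As an outline your proposal follows the right Trudinger--Wang/Chen--Li--Sheng template (compactness, locating the minimum, two-sided Hessian bounds, Caffarelli--Guti\'errez plus Schauder bootstrap), and you have correctly isolated the genuinely new feature: under the blow-up the first-order term coming from $\mathbb D_k$ dies precisely because $\sup|\nabla_\xi\log\mathbb D_k|\le \mathsf N_1$. This is the same observation the authors stress in the introduction (``$\frac{\p\log\mathbb D_k}{\p\xi_i}$ converges to zero as $k\to\infty$''). However, there are concrete gaps. In Step 2 the decay $|\nabla_{\tilde\xi}\log\tilde{\mathbb D}_k|\to 0$ is claimed from ``the blow-up expands Euclidean distances,'' but this requires the normalizing affine map $T_k$ to contract in \emph{every} direction, i.e.\ the chosen sublevel sets of $u_k$ must collapse to a point rather than degenerate to a segment. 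When $p^o_k$ approaches $\p\Omega^*$ the section $\{u_k<\lambda\}$ at fixed $\lambda$ is typically thin only transversally to the boundary and can remain of unit length tangentially, so the normalizing map need not be uniformly contracting; one must work at levels $\lambda_k\to 0$ and couple this to a strict-convexity argument, which is not supplied and cannot simply be assumed (it is part of what the interior theory establishes). Relatedly, the invocation of the Bernstein theorem requires the blow-up limit to live on all of $\bR^2$, which you do not justify. In Step 3 you cite Lemma~\ref{lemma_2.8}, but that lemma is stated for $u\in\mathbf{S}_{p_o}$ on a Delzant polytope satisfying the Guillemin boundary condition, not for $u_k\in\mc F(\Omega^*,1)$ on a normalized domain; a local variant must be formulated before it can be used. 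Finally, in Step 4 the function $\mathbb F_\Delta=\mathbb D_k[\det(\p^2_{ij}u_k)]^{-1}$ satisfies, in the $\xi$-variables, a linearized Monge--Amp\`ere equation with a first-order term (compare \eqref{eqn 2.13}), so the Caffarelli--Guti\'errez H\"older estimate and the subsequent Caffarelli $C^{2,\alpha}$ step must be verified to tolerate that drift term --- that requires a comment, since the classical statements are for the drift-free equation.
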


Let $u(\xi)$ be a smooth, strictly convex function defined on a
convex domain  $\Omega^{*}\subset  \t^{*}$.  As $u$ is strictly
convex,
$$
 G_u=\sum_{i,j} u_{ij}d \xi_i d\xi_j
$$
defines a Riemannian metric on $\Omega$. We call it the {\em Calabi} metric. \v
For any $p\in \bar \Delta,$ denote
 $B_b(p, \Delta)=\{ q\in  {\Delta}|\; d_{u}(q,p)< b\}, $  where $d_u(p,q)$ is the distance from $p$ to $q$
with respect to the Calabi metric $G_u$. Sometimes we call it the intersection of the geodesic ball
$B_{b}(p)$ and $\Delta$, and  denote by $B_{b}(p)\cap \Delta.$

\begin{theorem}\label{theorem_4.2.1}
Let $u\in \mathbf S$. Choose a coordinate system $(\xi_1,\xi_2)$ such that $\ell=\{\xi|\xi_1=0\}.$
Let $p\in \ell^\circ$  such that $B_b(p, \Delta)$ intersection
with $\partial\Delta$ lies in the
interior of $\ell$. Suppose that
\begin{equation}\label{eqn_7.1}
\| {\mathbb{S}}(u)\|_{C^3(B_b(p, \Delta))}\leq \mff N_{2} ,  \quad
h_{22}|_{\ell}\geq \mff N_{2} \inv,
\end{equation}
for some constant $\mff N_{2}>0,$ where $h=u|_{\ell}$ and $\|.\|_{C^3(\Delta)}$
denotes the Euclidean  $C^{3}$-norm.
Then, for any $p\in B_{b/2}(p, \Delta) $,
\begin{equation}\label{eqn_4.4}
\left(\Theta+\mathcal K+\mathbb K \right)(p)
d^2_u(p,\ell )\leq \mff C_2,
\end{equation}
where $\mff C_2$ is a positive constant
depending only on   $\mff N$. \end{theorem}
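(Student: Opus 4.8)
The plan is to argue by contradiction via an affine blow-up, following \cite{CLS-2}, with the convergence result Theorem~\ref{theorem_5.1} and the extensions to the generalized Abreu equation of the interior and edge estimates of \cite{CLS-1}, \cite{CLS-2}, \cite{LLS}, \cite{LLS-1}, \cite{LSZ} as the main analytic inputs. Suppose \eqref{eqn_4.4} fails: then there are solutions $u_k$ of the generalized Abreu equation on Delzant polytopes, an edge $\ell_k=\{\xi_1=0\}$, points $p_k\in\ell_k^\circ$ with $B_b(p_k,\Delta)$ meeting $\partial\Delta$ only inside $\ell_k$ and satisfying \eqref{eqn_7.1} with a fixed $\mff N$, and points $q_k\in B_{b/2}(p_k,\Delta)$ for which $(\Theta+\mathcal K+\mathbb K)(q_k)\,d^2_{u_k}(q_k,\ell_k)\to\infty$. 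Since $d_{u_k}(q_k,\ell_k)<b/2$, this forces $(\Theta+\mathcal K+\mathbb K)(q_k)\to\infty$. Note that the quantity $(\Theta+\mathcal K+\mathbb K)\,d^2_u(\cdot,\ell)$ is invariant under affine reparametrizations of the moment polytope and under the homothety $u\mapsto\lambda u$ of the potential; the latter rescales $d^2_u$ by $\lambda$ and each of $\Theta+\mathcal K+\mathbb K$, $\mathbb S(u)$, $h_G$ by $\lambda^{-1}$.

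Exploiting this invariance, a standard point-selection on $B_{3b/4}(p_k,\Delta)$ — maximizing $\Theta+\mathcal K+\mathbb K$ against a cutoff vanishing on the outer geodesic sphere, as in \cite{CLS-2} — replaces $q_k$ by a nearly optimal point at which the quantity is bounded on balls of fixed radius; one then normalizes by $u_k\mapsto\lambda_ku_k$ with $\lambda_k:=(\Theta+\mathcal K+\mathbb K)(q_k)\to\infty$, followed by a linear change of coordinates achieving $D^2u_k(q_k)=\mathrm{Id}$ and $\xi(q_k)=0$. Because $\lambda_k\to\infty$: the rescaled potentials $\tilde u_k$ live on domains $\Omega^*_k$ that exhaust $\mathbb R^2$; the rescaled scalar curvatures $\mathbb S(\tilde u_k)=\lambda_k^{-1}\mathbb S(u_k)\circ(\text{affine map})$ and $\tilde h_{G,k}$ tend to $0$ in $C^3$; the rescaled geodesic distance $d_{\tilde u_k}(q_k,\tilde\ell_k)^2=(\Theta+\mathcal K+\mathbb K)(q_k)\,d^2_{u_k}(q_k,\ell_k)\to\infty$, so $\tilde\ell_k$ recedes to infinity; and, since $\mathbb D=\prod_\alpha D_\alpha$ with the $D_\alpha$ fixed affine functions positive on $\bar\Delta$, the rescaled data satisfies $\mff N^{-1}\le\tilde{\mathbb D}_k\le\mff N$ and $\sup|\nabla_{\tilde\xi}\log\tilde{\mathbb D}_k|\to0$. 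The nondegeneracy $h_{22}|_{\ell_k}\ge\mff N^{-1}$ together with the geodesic-distance estimates of Section~\ref{sect_5} is what keeps the rescaled domains from collapsing near $\ell_k$.

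The hypotheses of Theorem~\ref{theorem_5.1} and of the (extended) edge estimates of \cite{CLS-1}, \cite{CLS-2} are thus met, so after passing to a subsequence $\tilde u_k\to u_\infty$ in $C^{3,\alpha}_{\mathrm{loc}}(\mathbb R^2)$; letting $k\to\infty$ in the rescaled form of \eqref{eqn 2.13} and using $\nabla_{\tilde\xi}\log\tilde{\mathbb D}_k\to0$, the limit $u_\infty$ is a convex solution of the \emph{Abreu} equation $\sum_{i,j}\frac{\partial^2 u_\infty^{ij}}{\partial\xi_i\partial\xi_j}=0$ on all of $\mathbb R^2$, with $\det(u_{\infty,ij})$ nondegenerate and the induced Calabi metric $G_{u_\infty}$ complete. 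The Bernstein theorem for the Abreu equation on $\mathbb R^2$ (\cite{CLS-1}, \cite{CLS-2}) then forces $u_\infty$ to be a quadratic polynomial. Consequently $\det(u_{\infty,ij})$ is constant and $Ric^{M}(u_\infty)\equiv0$; moreover the contributions of $\log\mathbb D$ and of the $D_\alpha$ to \eqref{eqn_Ric_4.7}--\eqref{eqn_Ric_4.8} all vanish in the blow-up limit, so $Ric(\mathcal G_{u_\infty})\equiv0$ as well. Therefore $(\Theta+\mathcal K+\mathbb K)(0)=0$, contradicting the normalization $(\Theta+\mathcal K+\mathbb K)(q_k)=1$ that persists in the limit. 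This proves \eqref{eqn_4.4}.

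The main obstacle is the blow-up and convergence step: one must verify that the rescaled solutions genuinely converge to a solution of the \emph{Abreu} equation rather than the generalized one, which depends on both the $C^{3,\alpha}$-compactness of Theorem~\ref{theorem_5.1} and the precise scaling $\nabla_\xi\log\mathbb D_k\to0$, and one must confirm, using \eqref{eqn_7.1} and the geodesic-distance estimates of Section~\ref{sect_5}, that the limiting domain is all of $\mathbb R^2$ with a complete Calabi metric so that the Bernstein theorem applies. The bookkeeping showing that $\Theta$, $\mathcal K$ and $\mathbb K$ each degenerate in the limit is routine, but — owing to the extra first-order terms that $\mathbb D$ introduces in \eqref{eqn 2.13} and the fiber curvature terms in \eqref{eqn_Ric_4.7}--\eqref{eqn_Ric_4.8} — it is appreciably more involved than in the toric case.
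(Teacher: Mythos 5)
The paper itself does not prove Theorem~\ref{theorem_4.2.1}: it is stated in Section~\ref{sec-Prliminaries} as a preliminary, without a proof environment, in the block introduced by ``The following results are proved in \cite{LLS-1}'' (it is the generalized-Abreu analogue of Theorem~4.1 of \cite{CLS-2}). So there is no argument in this manuscript to compare yours against literally; what I can do is check your sketch for internal consistency and against the proof the paper \emph{does} give for the parallel statement, Theorem~\ref{theorem_5.0.1}.

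Your overall strategy --- contradiction, affine rescaling by $\lambda_k=(\Theta+\mathcal K+\mathbb K)(q_k)$, passage to an entire Abreu solution via Theorem~\ref{theorem_5.1} and the decay $\nabla_\xi\log\mathbb D_k\to 0$, then the Bernstein theorem --- is the right one and matches the paper's methodology. There is, however, a genuine gap in the point selection. You maximize $\Theta+\mathcal K+\mathbb K$ against a cutoff that vanishes only on the \emph{outer} geodesic sphere. That guarantees bounded rescaled curvature on growing balls around the selected point $\hat q_k$, but it does not guarantee that $(\Theta+\mathcal K+\mathbb K)(\hat q_k)\,d_{u_k}^2(\hat q_k,\ell_k)\to\infty$; the maximizer could drift to the edge, in which case $d_{\tilde u_k}(\hat q_k,\tilde\ell_k)$ stays bounded, the limit is a half-space rather than all of $\mathbb R^2$, and the whole-plane Bernstein theorem you invoke does not apply. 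To fix this, the function maximized must be (a cut-off version of) exactly the quantity you wish to bound, $F(q)=(\Theta+\mathcal K+\mathbb K)(q)\,d^2_u(q,\ell)\cdot\psi(q)$ with $\psi$ vanishing on the outer sphere; since $F$ also vanishes on $\ell$, the maximum is interior, and $\lambda_k d^2_{u_k}(\hat q_k,\ell_k)\to\infty$ is then automatic, so the edge genuinely recedes. Alternatively, one must add the missing second case (edge at bounded rescaled distance) and handle it by a boundary Bernstein-type argument, exactly as the paper's proof of Theorem~\ref{theorem_5.0.1} splits into Case~1/Case~2. As written, your proposal silently assumes the favorable case. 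The rest of the sketch --- the $C^{3,\alpha}$ compactness, the vanishing of the $\mathbb D$ and $D_\alpha$ contributions to \eqref{eqn_Ric_4.7}--\eqref{eqn_Ric_4.8} in the blow-up, and the completeness of the limiting Calabi metric required by Bernstein --- is correctly flagged by you as needing work but is not itself wrong in direction.
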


\section{Estimates of Riemannian Distances on $\partial\Delta$}
\label{sect_5}

In this section, we discuss estimates of geodesic distances near
the boundary of polytopes.
We proceed similarly as in Section 5 \cite{CLS-2} and we only state results.

Let $\Delta\subset \mathbb R^2$ be a Delzant polytope, $\ell$ be an edge of $\Delta$ and
$\ell^\circ$ be the interior of $\ell$. Let $\halfplane\subset \t^\ast$ be the half plane given by
$\xi_1\geq 0$. Take a point $\xi^{(\ell)}\in \ell^{\circ}$.
For simplicity, we fix a coordinate system on
$\t^\ast$ such that { (i) $\ell$ is on the $\xi_2$-axis; (ii) $\xi^{(\ell)}=0$;
(iii) $\Delta\subset \halfplane$.} Define
$$\ell_{c,d}=\{(0,\xi_2)|c\leq \xi_2\leq d\}\subset \ell^o.$$

Let $u_{k} \in \mc C^\infty(\Delta,v;K_o)$ be a sequence of
functions satisfying
\begin{equation}\label{eqn_3.1}
-\frac{1}{\mathbb {D}}\sum_{i,j=1}^n\frac{\partial^2 \mathbb {D}(u_{k})^{ij}}{\partial \xi_i\partial \xi_j}=A_{k}.
\end{equation}
We define the operator $\mathbb S_{\mathbb{D}}(u)$   as
$$
\mathbb S_{\mathbb{D}}(u)=-\frac{1}{\mathbb {D}}\sum_{i,j=1}^n
\frac{\partial^2 \mathbb {D} u^{ij}}{\partial \xi_i\partial \xi_j}.
$$
 Suppose that $\Delta$ is uniformly $(\mathbb{D}, A_{k}, \lambda)$-stable and that $A_{k}$
$C^3$-converges to $A$ on $\bar \Delta$. Then,
by the interior regularity  and Theorem  \ref{theorem_3.2}, we have
\begin{enumerate}
\item $\left|\max_{\bar\Delta} u_k-\min_{\bar\Delta} u_k\right| \leq \mathsf C_o$,
\item  $u_{k}$  locally   $C^6$-converges
in $\Delta$ to a strictly convex function $u_\infty$ and $u_\infty$ can be
continuously  extended to be defined on $\bar \Delta.$
\end{enumerate}
Denote by $h_{k}$ the restriction of $u_{k}$ to $\ell$. Then, $h_{k}$ locally
uniformly converges to a convex function $h $ on $\ell$.
Obviously, $u_\infty|_{\ell^\circ} \leq h $. By the estimates of determinantes provided
by  Lemma \ref{lemma 2.5} and
Lemma \ref{lemma_2.8} and the same argument as in Subsections \S 5.1-\S 5.2\cite{CLS-2},
we get the $C^0$-convergence and the strict convexity of $h$ as follows.

\begin{lemma}\label{proposition_5.1.1}
For $q\in \ell^\circ$,
$u_\infty(q) = h(q)$.
\end{lemma}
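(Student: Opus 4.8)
The plan is to show that the limit $u_\infty$, which a priori only satisfies $u_\infty|_{\ell^\circ}\le h$, actually attains equality at each interior point $q$ of the edge $\ell$. The strategy, following \S 5.1--\S 5.2 of \cite{CLS-2}, is a contradiction argument combined with the determinant estimates of Lemma \ref{lemma 2.5} and Lemma \ref{lemma_2.8}. Suppose instead that $u_\infty(q)<h(q)$ for some $q\in\ell^\circ$; after a translation we may take $q=\xi^{(\ell)}=0$. Choose $\delta>0$ with $u_\infty(0)+3\delta < h(0)$, and by continuity of $u_\infty$ on $\bar\Delta$ and of $h$ on $\ell$, fix a segment $\ell_{-a,a}\subset\ell^\circ$ on which $u_\infty < h - 2\delta$ along $\ell$ and on which the relevant lower bounds ($h_{22}\ge \mathsf N^{-1}$ from strict convexity of $h$, an $L^\infty$ bound on $\mathbb D$ and $A$) all hold uniformly.

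The key step is to use the convexity of $u_k$ together with the determinant lower bound near the edge. By Lemma \ref{lemma 2.5}, in a neighborhood of each interior edge point one has $\det(D^2 u_k)\ge b/\xi_1$, so $\det(D^2 u_k)\to\infty$ as $\xi_1\to 0$; dually, the Legendre transforms $f_k$ have Hessian determinant $\det(f_{k,ij})=[\det(D^2u_k)]^{-1}\to 0$, forcing the image of the boundary under the moment/gradient map to behave controllably. First I would establish, exactly as in \cite{CLS-2}, a barrier: since $u_k\to u_\infty$ in $C^6_{loc}(\Delta)$ and $u_k|_\ell = h_k\to h$, for any point $\xi=(\xi_1,\xi_2)$ with small $\xi_1>0$ and $\xi_2$ near $0$, convexity in the $\xi_1$-direction gives
\begin{equation*}
u_k(\xi_1,\xi_2)\le (1-t)\,u_k(\xi_1/ (1-t),\xi_2) + t\, h_k(\xi_2)
\end{equation*}
for appropriate $t\in(0,1)$, which upon letting $k\to\infty$ and then $\xi_1\to 0$ along a sequence where the interior values converge sandwiches $\limsup u_\infty(\xi_1,\xi_2)$ between $u_\infty$-values and $h$-values; combined with the hypothesized strict gap this produces a region where $u_k$ is much smaller than the Guillemin-type lower barrier near $\ell$, contradicting the lower bound forced by $\det(D^2u_k)\ge b/\xi_1$ (which, integrated, controls how fast $u_k$ can rise/fall off the edge). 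Concretely, Lemma \ref{lemma_2.8} supplies the complementary upper bound $\det(D^2 u_k)/(d+f_k)^4\le b_0/d_E(\cdot,\partial\Delta)^4$, and the two estimates together pin down the asymptotics of $u_k$ near $\ell^\circ$ tightly enough that $u_\infty(0)<h(0)$ is impossible.

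The main obstacle I expect is the same one that makes \S 5.2 of \cite{CLS-2} nontrivial: passing from the pointwise determinant bounds to an actual lower bound on $u_\infty$ at the edge requires controlling $u_k$ in a full two-dimensional neighborhood of an edge point, not just along the normal ray, and this uses the affine-invariant/blowup machinery together with the Monge--Amp\`ere structure to rule out the degeneration $u_\infty|_{\ell^\circ}<h$. The only new feature here versus \cite{CLS-2} is the presence of $\mathbb D$, but since $\mathbb D$ is bounded above and below and (by the hypotheses carried along from the sequence in \eqref{eqn_3.1}, cf. Theorem \ref{theorem_5.1}) $|\nabla_\xi\log\mathbb D|$ is controlled, every estimate used is the $\mathbb D$-weighted analogue already recorded in Lemma \ref{lemma 2.5}, Lemma \ref{lemma_2.8}, and Theorem \ref{theorem_3.2}; the argument of \cite{CLS-2} then goes through verbatim after these substitutions, so I would simply indicate the modifications rather than repeat the proof. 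This yields $u_\infty(q)=h(q)$ for all $q\in\ell^\circ$, as claimed.
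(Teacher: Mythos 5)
Your citation strategy matches the paper's: the paper likewise proves this (together with Lemma \ref{lemma_5.2.1}) by invoking Lemma \ref{lemma 2.5}, Lemma \ref{lemma_2.8}, and the argument of \S 5.1--\S 5.2 of \cite{CLS-2}, and you have correctly identified that the $\mathbb D$-dependence is absorbed entirely into those $\mathbb D$-weighted determinant estimates. So at the level of what the paper actually records, your route is the same.

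However, the step you sketch for the hard direction is not right. The convexity inequality $u_k(\xi_1,\xi_2)\le (1-t)u_k(\xi_1/(1-t),\xi_2)+t\,h_k(\xi_2)$ yields only the easy inequality $u_\infty|_{\ell^\circ}\le h$, which the paper already notes as immediate; it does not produce a contradiction from a strict gap. More seriously, the claim that the lower bound $\det(D^2u_k)\ge b/\xi_1$ from Lemma \ref{lemma 2.5} ``integrated, controls how fast $u_k$ can rise/fall off the edge'' is not a valid inference: a lower bound on the Hessian determinant constrains only a product of eigenvalues and gives no one-sided control on the function values, and it is in fact \emph{saturated} by Guillemin-type boundary behavior $u\sim \xi_1\log\xi_1$ with no boundary jump at all, so it cannot by itself rule one out. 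The no-jump conclusion in \cite{CLS-2} \S 5.1--\S 5.2 rests on comparing $u_k$ with affine functions and estimating the resulting sections, with the determinant bounds of Lemmas \ref{lemma 2.5}--\ref{lemma_2.8} entering through that mechanism rather than through a direct integration. Since you already defer the full argument to \cite{CLS-2}, I would simply strike the ``integrated, controls how fast $u_k$ can rise/fall'' sentence and the appeal to a ``Guillemin-type lower barrier,'' and instead state plainly that the gap is ruled out by the section/comparison argument of \cite{CLS-2} with the $\mathbb D$-modified determinant estimates substituted; as written, that particular step would not survive scrutiny.
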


\begin{lemma}\label{lemma_5.2.1}
Let $u\in \mathbf S$ and $h=u|_{\ell}$. There is
a constant
 $\mff C_3>0$ depending only   $diam(\Delta)$, $\max_{\bar\Delta}\mathbb D$,
 $\min_{\bar\Delta}\mathbb D$ and $\|A\|_{L^{\infty}(\Delta)}$, such that on
$\ell_{c,d}$, $ \partial^2_{22}h\geq \mff C_3.$\end{lemma}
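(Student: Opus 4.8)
The plan is to derive the lower bound for $h''=\partial^2_{22}(u|_\ell)$ from the interior determinant estimate of Lemma \ref{lemma 2.5}, using the Guillemin boundary behaviour of $u$. Throughout, $u\in\mathbf S$ is a solution of \eqref{eqn 2.7} with right-hand side $A$, as in the setting where Lemma \ref{lemma_5.2.1} is used. In the chosen coordinates $\ell=\{\xi_1=0\}$ and $\Delta\subset\halfplane$; write $\delta_1$ for the affine function defining $\ell$, so $\delta_1=a\,\xi_1$ with $a>0$ determined by $\Delta$. In $v=\sum_k\delta_k\log\delta_k$ every term other than $\delta_1\log\delta_1$ is smooth on a neighbourhood of $\ell^\circ$ in $\bar\Delta$, since the corresponding $\delta_k$ are bounded away from $0$ there; hence $v-a\,\xi_1\log\xi_1$ is smooth on a neighbourhood of $\ell^\circ$ in $\bar\Delta$. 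Consequently $\psi:=u-a\,\xi_1\log\xi_1=(u-v)+(v-a\,\xi_1\log\xi_1)$ is smooth up to $\ell^\circ$, and differentiating gives $u_{11}=a/\xi_1+\psi_{11}$, $u_{12}=\psi_{12}$, $u_{22}=\psi_{22}$; moreover $h(\xi_2)=u(0,\xi_2)=\psi(0,\xi_2)$, so $h''(\xi_2)=\psi_{22}(0,\xi_2)=\lim_{\xi_1\to0^+}u_{22}(\xi_1,\xi_2)$.

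Next I would invoke Lemma \ref{lemma 2.5}: for each $p=(0,s)\in\ell^\circ$ there is a neighbourhood of $p$ on which $\det(D^2u)\ge b/\xi_1$, where $b>0$ depends only on $diam(\Delta)$, $\max_{\bar\Delta}\mathbb D$, $\min_{\bar\Delta}\mathbb D$ and $\|A\|_{L^\infty(\Delta)}$. Covering the compact segment $\ell_{c,d}$ by finitely many such neighbourhoods yields an $\varepsilon>0$ with $\det(D^2u)\ge b/\xi_1$ on $\{0<\xi_1<\varepsilon,\ c\le\xi_2\le d\}$. Substituting the Hessian entries above,
$$\det(D^2u)=\left(\frac{a}{\xi_1}+\psi_{11}\right)\psi_{22}-\psi_{12}^2=\frac{a\,\psi_{22}}{\xi_1}+\bigl(\psi_{11}\psi_{22}-\psi_{12}^2\bigr),$$
so, multiplying by $\xi_1>0$,
$$a\,\psi_{22}(\xi_1,\xi_2)+\xi_1\bigl(\psi_{11}\psi_{22}-\psi_{12}^2\bigr)(\xi_1,\xi_2)\ \ge\ b .$$
Fixing $\xi_2=s\in[c,d]$ and letting $\xi_1\to0^+$, the second term tends to $0$ because $\psi_{11}\psi_{22}-\psi_{12}^2$ is continuous, hence bounded, near $(0,s)$; therefore $a\,h''(s)=a\,\psi_{22}(0,s)\ge b$, i.e. $\partial^2_{22}h\ge b/a=:\mff C_3$ on $\ell_{c,d}$, with $\mff C_3$ depending only on the stated quantities.

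The delicate points are the verification of the hypothesis $\mathbb F=0$ on $\partial\Delta$ needed to apply Lemma \ref{lemma 2.5} — for $u\in\mathbf S$ solving \eqref{eqn 2.7} this is among the facts carried over from \cite{CLS-1} and \cite{CLS-2} to the present setting, and should be recorded before this lemma — and, strictly, the fact that the neighbourhoods in Lemma \ref{lemma 2.5} patch to a uniform strip; the latter is harmless, since $\varepsilon$ enters only through the limit $\xi_1\to0^+$ while $b$ is already uniform. I do not expect a genuine obstacle. Applied verbatim to the sequence $u_k$ solving \eqref{eqn_3.1}, the same argument gives $\partial^2_{22}h_k\ge\mff C_3$ uniformly in $k$, which together with the local uniform convergence $h_k\to h$ yields the uniform strict convexity of the limit used in the following sections.
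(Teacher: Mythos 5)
Your proposal is correct and matches the route the paper takes: extracting the uniform edge convexity directly from the determinant lower bound of Lemma~\ref{lemma 2.5} together with the Guillemin splitting $u=a\xi_1\log\xi_1+\psi$ with $\psi$ smooth up to $\ell^\circ$, then sending $\xi_1\to 0^+$ after multiplying by $\xi_1$. The caveat you flagged --- that $\mathbb F=0$ on $\partial\Delta$, needed to invoke Lemma~\ref{lemma 2.5} --- is indeed the standard Guillemin/Abreu boundary behaviour for $u\in\mathbf S$ (it is precisely the qualitative statement $h''>0$ on $\ell^\circ$, so the role of Lemma~\ref{lemma 2.5} is to upgrade that to a quantitative bound, not circularly to establish positivity), and it is among the facts carried over from \cite{CLS-1}, \cite{CLS-2}; Lemma~\ref{lemma_2.8} is used for the companion $C^0$-convergence statement (Lemma~\ref{proposition_5.1.1}), not for this one.
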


By Theorem \ref{theorem_5.1}, Theorem \ref{theorem_4.2.1} and by using the same method as in
Subsections \S 5.3-\S 5.4\cite{CLS-2} (see \cite{LLS-1} for more explanation),  we can prove the following result.

\begin{theorem}\label{theorem_5.3.3}
Let $p\in \ell_{c,d}.$
Suppose that  $D_a(p)\cap \partial \Delta\subset  \ell_{c-\epsilon_o,d+\epsilon_o}$ for some $0<a<\epsilon_{o}.$
Then, there exists a small constant
$\epsilon$, independent of $k$, such that the intersection of the geodesic ball
$B\indexn_\epsilon(p)$ and $\Delta$ is contained in a  Euclidean   half-disk $D_a(p)\cap \Delta$.
Here  $D_a(p)$  and $B\indexn_a(p)$ denote the balls of radius $a$ that are  centered at
$p$ with respect to the Euclidean metric and Calabi metric $G_{u_{k}}$ respectively.
\end{theorem}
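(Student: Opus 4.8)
The plan is to argue by contradiction, following the scheme of \S5.3--\S5.4 of \cite{CLS-2}, and to reduce everything to the two a priori estimates already at our disposal: the $C^{3,\alpha}$-convergence near interior points from Theorem \ref{theorem_5.1} and the quadratic decay estimate $(\Theta+\mathcal K+\mathbb K)(p)\,d_u^2(p,\ell)\leq \mff C_2$ of Theorem \ref{theorem_4.2.1}. First I would fix the point $p\in\ell_{c,d}$ and the Euclidean half-disk $D_a(p)\cap\Delta$ with $D_a(p)\cap\partial\Delta\subset\ell_{c-\epsilon_o,d+\epsilon_o}$. Suppose the conclusion fails: then there is a sequence $k\to\infty$ and Calabi-geodesics $\gamma_k$ emanating from $p$, of $G_{u_k}$-length less than $\epsilon$ (with $\epsilon\to 0$ allowed), that escape the half-disk $D_a(p)\cap\Delta$. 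Reparametrizing by arclength, each $\gamma_k$ reaches a point at Euclidean distance $\geq a$ from $p$ while staying at bounded Calabi-length; the goal is to show this forces the Calabi-length to be bounded below by a positive constant independent of $k$, contradicting $\epsilon\to 0$.

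The key steps, in order, are: (i) use Lemma \ref{lemma_5.2.1} to get the uniform lower bound $\partial^2_{22}h_k\geq\mff C_3$ on a slightly enlarged segment $\ell_{c-\epsilon_o,d+\epsilon_o}$, which verifies the hypothesis $h_{22}|_\ell\geq\mff N_2^{-1}$ of Theorem \ref{theorem_4.2.1}; (ii) apply Theorem \ref{theorem_4.2.1} on balls $B_b(q,\Delta)$ centered at points $q\in\ell_{c-\epsilon_o,d+\epsilon_o}$ to control the Calabi geometry near the edge in terms of the Euclidean distance to $\ell$ — this gives that the metric $G_{u_k}$ is, up to the curvature factor, comparable to a model metric near the edge; (iii) on the interior region, away from a fixed Euclidean neighborhood of $\ell$, invoke Theorem \ref{theorem_5.1} (after checking $|\mathcal S_{\mathbb D_k}(u_k)|\leq\mff N_1$, $\mff N_1^{-1}\leq\mathbb D_k\leq\mff N_1$, and $\sup|\nabla_\xi\log\mathbb D_k|\leq\mff N_1$, all of which follow from $u_k\in\mc C^\infty(\Delta,v;K_o)$, the stability hypothesis, and the smoothness of the fixed $\mathbb D$) to obtain uniform $C^{3,\alpha}$-bounds on $u_k$, hence two-sided bounds on the Hessian $(u_k)_{ij}$ on compact interior subsets; (iv) patch (ii) and (iii): any curve leaving $D_a(p)\cap\Delta$ must either travel a definite Euclidean distance through the interior region, where the Hessian bounds give a definite Calabi-length, or it must approach $\partial\Delta$ at a point of $\ell_{c-\epsilon_o,d+\epsilon_o}$, where step (ii) shows the edge is at infinite (or at least uniformly positive) Calabi-distance in the relevant directions — and in all cases the Calabi-length is bounded below by a positive constant independent of $k$.

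The main obstacle will be step (iv), the glueing of the near-edge estimate with the interior estimate: Theorem \ref{theorem_4.2.1} controls $d_u^2(p,\ell)$ only in terms of the combined curvature quantity $\Theta+\mathcal K+\mathbb K$, which is not itself bounded a priori, so one must argue more carefully — as in \cite{CLS-2} — using the structure of the geodesic equation in the Calabi metric and a continuity/connectedness argument to rule out geodesics that oscillate between the two regimes, and here the extra first-order terms in \eqref{eqn 2.13} coming from $\mathbb D$ (through $(\log\mathbb D)_i$) enter the relevant differential inequalities and must be absorbed using the uniform bound $\sup_{\Omega^*}|\nabla_\xi\log\mathbb D_k|\leq\mff N_1$. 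Apart from that, the remaining work is a routine transcription of \S5.3--\S5.4 of \cite{CLS-2} with $\mathbb D$ carried along; in particular, the affine-invariance arguments and the Bernstein-type estimates go through verbatim once the $\mathbb D$-dependent constants in Lemma \ref{lemma 2.5} and Lemma \ref{lemma_2.8} are recorded.
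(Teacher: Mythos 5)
Your proposal identifies the same tools (Theorems \ref{theorem_5.1}, \ref{theorem_4.2.1} and Lemma \ref{lemma_5.2.1}) and the same contradiction-plus-patching scheme that the paper itself adopts by deferring to \S5.3--\S5.4 of \cite{CLS-2}, so the overall approach matches. One caveat on your step (iv): since $p\in\ell$, the edge is at zero Calabi-distance from $p$, so what must be bounded below is the Calabi-distance from $p$ to $(\partial D_a(p))\cap\bar\Delta$, with the tangential lower bound coming from the uniform convexity $\partial^2_{22}h\geq\mff C_3$ of Lemma \ref{lemma_5.2.1} rather than from the curvature decay of Theorem \ref{theorem_4.2.1}; moreover Theorem \ref{theorem_5.1} enters as the blow-up/compactness device in the rescaled analysis near $\ell$, not as the source of interior estimates, which are already supplied by the interior regularity of \cite{LLS} quoted at the beginning of Section \ref{sect_5}.
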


\section{Estimates of $\mc K$ near Divisors}\label{sect_EstimateK}

In this section, we estimate Ricci curvatures near divisors.
We follow the corresponding section in \cite{CLS-1}.

Consider a polytope $\Delta\subset \mathbb R^2.$
Let $\fkz_o$ be a point on a
divisor $Z_\ell$ for some $\ell$ and consider $\mff{p}_{o}\in \ell$ with $\mff{p}_{o}=\tau(\fkz_{o})$. We will denote $Z:=Z_\ell$. Take a coordinate transformation
$$
\xi^{\ell}_{i}=\sum_{j=1}^{2} a_{i}^{j}(\xi_{j} -\xi_{j}(\mff{p}_{o}))
$$
so that $\xi^{\ell}(\mff{p}_{o})=0$ and $\ell=\{\xi^{\ell}|\xi^{\ell}_{1}=0\}.$
Let $f_{\ell} $ be the Legendre transformation of $u$ in terms of $\xi^\ell$, i.e.,
$$
x^{i}_{\ell}=\frac{\p u}{\p \xi_{i}^{\ell}},\;\;\; f_{\ell}=\sum _{i=1}^{2} \xi^{\ell}_{i}x_{\ell}^{i}-u.
$$
Then, $f_{\ell}(\fkz_{o})=\inf f_{\ell}=0$ and
$$f_{\ell}=f-\sum a_{i}x^{i}-c_{o},$$
where $f$ is as in section \S\ref{sec-2.2}. Choose a local complex coordinate system
$(z_{\ell}^1,z_{\ell}^2)$ around $\fkz_o$ such that
$$z_{\ell}^1=e^{\frac{1}{2}(x_{\ell}^{1}+\sqrt{-1}y_{\ell}^1)} ,\;\;\;z_{\ell}^2=x_{\ell}^{2}+\sqrt{-1}y_{\ell}^2.$$
Set
\begin{equation} \label{eqn_f_5.2}
 W_{\ell}=64\det\left(\frac{\p ^2 f_{\ell}}{\p z^i_{\ell} \p \bar  z^j_{\ell}}\right)
 =\det\left(\frac{\p ^2 f_{\ell}}{\p x^i_{\ell} \p x^j_{\ell}}\right)e^{-x^1_{\ell}} ,\;\;\;\;\;\mathbb F_{\ell}=W_{\ell}\mathbb D,
\end{equation}
and
\begin{equation}
 \label{eqn_f_5.3}
 \mathbf{\Psi}_{\ell}:=\|\nabla log \mathbb F_{\ell}\|^2_f,\;\;\;\;\;\;\;\;\;\;\;\;\;
P=\exp\left(\kappa
\mathbb  F_{\ell}^{a}\right)\sqrt{\mathbb  F_{\ell}}{\mathbf{\Psi}_{\ell}},
\end{equation}
where $\kappa$ and $a$ are constants to be determined later. Then by \eqref{eqn 2.13}, we have
\begin{align}\label{eqn_4.3}
-\square \log \mathbb  F_{\ell} =A+ \sum_{k=1}^{2}\frac{\p \log \mathbb  D}{\p \xi _{k}}
\frac{\p x^{1}_{\ell}}{\p x^{k}} =A+ \sum_{k=1}^{2}\frac{\p \log \mathbb  D}{\p \xi_{k}}  b^{1}_{k}  :=  \mathbb  A_{\ell},
\end{align}
where $(b_{i}^{j})$ denote the inverse matrix of $(a_{i}^{j})$.
We point out that the equation \eqref{eqn_4.3} involves first-order terms
due to the presence of $\mathbb D$.

In this section, we denote $\mathbb F_{\ell }, \mathbf{\Psi}_{\ell},\mathbb A_{\ell},\cdots$
by $\mathbb F,\mathbf{\Psi},\mathbb A,\cdots $ to simply notation.
We proceed similarly as in \cite{CLS-1} to prove the following result.

\begin{theorem}\label{theorem_5.0.1}
Let $u\in \mathbf S$. Choose a coordinate system $(\xi_1,\xi_2)$ such that $\ell=\{\xi|\xi_1=0\}.$
Let $p\in \ell^{o}$ and $D_b(p)\cap \bar \Delta$ be an Euclidean half-disk such that its intersection
with $\partial\Delta$ lies in the
interior of $\ell$. Let $B_a(\fkz_o)$ be a closed geodesic
ball satisfying  $\tau_f(B_a(\fkz_o))\subset D_b(p)$. Suppose that
\begin{align}\label{eqn_7.1}
\min_{D_b(p)\cap \bar \Delta}|\mathbb  A(u)|\geq \delta>0,\quad
\|\mathbb {A}(u)\|_{C^3(D_b(p)\cap \bar \Delta)}\leq\mff N_3,  \quad
h_{22}|_{D_b(p)\cap\ell}\geq \mff N_3\inv,
\end{align}
for some constant $N_3>0,$ where $h=u|_{\ell}$ and $\|.\|_{C^3(\Delta)}$
denotes the Euclidean  $C^{3}$-norm.
Then,
\begin{equation}\label{eqn_7.2}
 \frac{\mathbb F^{\frac{1}{2}}(\fkz)}{\max\limits_
 {B_a(\fkz_o )} {\mathbb F}^{\frac{1}{2}}}
\left(\mathbb K(\fkz)+\|\nabla \log |\mathbb A|\|^2(\fkz)+ \mathbf{\Psi}(\fkz)\right)a^2
\leq \mff C_{4}\quad\text{for any } \fkz\in B_{a/2}(\fkz_o ),
\end{equation}
where $\mff C_{4}>0$ is a positive constant depending only on $a$, $b$, $\delta$, and $\mff N_3$.
\end{theorem}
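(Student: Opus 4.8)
The plan is to run a maximum-principle argument for the auxiliary function $P=\exp(\kappa\mathbb F^a)\sqrt{\mathbb F}\,\mathbf\Psi$ defined in \eqref{eqn_f_5.3}, exactly parallel to the corresponding argument in \cite{CLS-1}, but keeping careful track of the first-order terms produced by $\mathbb D$ in \eqref{eqn_4.3}. First I would localize: let $\rho$ be a cutoff supported in $B_a(\fkz_o)$, equal to $1$ on $B_{a/2}(\fkz_o)$, with the usual bounds on $\square\rho$ and $\|\nabla\rho\|_f$ relative to the Calabi metric, and consider the function $Q=\rho^2 P$. Since $\overline{B_a(\fkz_o)}$ is compact (here one uses Theorem~\ref{theorem_5.3.3} and Theorem~\ref{theorem_4.2.1} to know the geodesic ball stays inside the half-disk and has controlled geometry), $Q$ attains its maximum at some interior point $\fkz^\ast$. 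At $\fkz^\ast$ we have $\nabla\log Q=0$ and $\square\log Q\le 0$, which I would expand into the inequality
\begin{equation}\label{eqn-maxineq}
0\;\ge\;\square\log\rho^2+\kappa a\,\mathbb F^a\,\square\log\mathbb F+\kappa a\,\square(\mathbb F^a)_{\text{extra}}+\tfrac12\square\log\mathbb F+\square\log\mathbf\Psi,
\end{equation}
where I am being schematic about the lower-order pieces. The two key structural inputs are: (i) $-\square\log\mathbb F=\mathbb A$ by \eqref{eqn_4.3}, which is bounded above and below by the hypothesis \eqref{eqn_7.1} (the lower bound $|\mathbb A|\ge\delta$ is what makes the $\kappa\mathbb F^a$ factor do its job, as in \cite{CLS-1}); and (ii) a Bochner-type lower bound for $\square\log\mathbf\Psi$ in terms of $\mathbb K$, $\|\nabla\log|\mathbb A|\|^2$, $\mathbf\Psi$ and the Ricci curvature of $G_f$, which is where the curvature quantity $\mathbb K$ enters. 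The presence of $\mathbb D$ forces the extra term $\sum_k\frac{\p\log\mathbb D}{\p\xi_k}b^1_k$ in $\mathbb A$, so I would need the companion estimates on $\nabla_\xi\log\mathbb D$ (as in Theorem~\ref{theorem_5.1}) together with the elementary fact that $\mathbb D$ is a fixed positive polynomial, hence $\|\nabla\log\mathbb F\|_f$ controls $\|\nabla\log W_\ell\|_f$ up to the bounded correction $\|\nabla\log\mathbb D\|_f$.

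The core of the computation is the same differential-inequality bookkeeping as in \cite{CLS-1}: one chooses $a$ small (e.g. $a=\tfrac14$ or whatever makes the exponents balance) and $\kappa$ large depending on $\delta$ and $\mff N_3$, so that the "good" negative term $-\kappa a^2\mathbb F^{2a-1}\|\nabla\mathbb F\|_f^2$ coming from differentiating the exponential twice dominates the "bad" cross terms $\kappa\mathbb F^a\cdot(\text{gradient of }\mathbb A)$ and the lower-order contributions of $\mathbb D$. After absorbing everything, \eqref{eqn-maxineq} yields at $\fkz^\ast$ a bound of the form $\rho^2\mathbb F^{1/2}\mathbf\Psi\le C$ with $C=C(a,b,\delta,\mff N_3)$; the terms $\mathbb K(\fkz)$ and $\|\nabla\log|\mathbb A|\|^2(\fkz)$ are incorporated by the standard trick of running the argument with $\mathbf\Psi$ replaced by $\mathbf\Psi+\mathbb K+\|\nabla\log|\mathbb A|\|^2$ (or by first bounding $\mathbb K$ in terms of $\mathbf\Psi$ and lower-order data via Theorem~\ref{theorem_4.2.1} applied on $D_b(p)$, which supplies the needed $\|\mathbb S(u)\|_{C^3}$ and $h_{22}$ control from \eqref{eqn_7.1}). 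Evaluating at a general $\fkz\in B_{a/2}(\fkz_o)$, where $\rho\equiv1$ and $P(\fkz)\le P(\fkz^\ast)\le\rho^{-2}(\fkz^\ast)C$, and dividing by $\max_{B_a(\fkz_o)}\mathbb F^{1/2}$ to convert the pointwise $\mathbb F^{1/2}(\fkz^\ast)$ into the ratio appearing in \eqref{eqn_7.2}, gives the stated estimate after relabeling constants.

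The step I expect to be the main obstacle is controlling the $\mathbb D$-induced first-order term in \eqref{eqn_4.3} inside the Bochner computation: in \cite{CLS-1} the equation for $\log\mathbb F$ is $-\square\log\mathbb F=\mathbb A$ with $\mathbb A$ a genuine function of $u$ and its low derivatives, whereas here $\mathbb A_\ell=A+\sum_k(\log\mathbb D)_{\xi_k}b^1_k$ contains $\nabla_\xi\log\mathbb D$, so differentiating \eqref{eqn_4.3} once to estimate $\square\mathbf\Psi$ brings in second $\xi$-derivatives of $\log\mathbb D$ contracted against $f^{ij}$, i.e. a term of order $\det(u_{ij})^{-1}=W_\ell$-type weight that must be shown to be harmless. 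The saving grace is that $\mathbb D=\prod D_\alpha$ with each $D_\alpha=2\sum M_\alpha^j\xi_j$ affine and bounded away from $0$ on $\bar\Delta$, so all derivatives of $\log\mathbb D$ in $\xi$ are bounded a priori, and the contraction with $f^{ij}$ is absorbed into the same "bad term" that $\kappa\mathbb F^a$ was designed to kill; making this absorption precise — choosing the exponents $a$ and the constant $\kappa$ uniformly — is the technical heart of the proof and is where I would spend the bulk of the effort, following the template of \cite{CLS-1} line by line.
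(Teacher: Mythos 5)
Your proposal takes a genuinely different route from the paper's, and unfortunately the route has two serious gaps.

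The paper proves Theorem~\ref{theorem_5.0.1} by \emph{contradiction via affine blow-up}, not by a direct maximum principle. One assumes there are $f_k$ and $\fkz_k'\in B_{a/2}(\fkz_o)$ with $\mc W_k^{1/2}\mc R_k(\fkz_k')a^2\to\infty$ (where $\mc W_k=\mathbb F_k/\max_{B_a}\mathbb F_k$ and $\mc R_k=\mc K+\mathbb K+\|\nabla\log|\mathbb A|\|^2+\mathbf\Psi$), locates the maximum $z_k^o$ of the scale-invariant quantity $F_k=\mc W_k^{1/2}\mc R_k\,d_{f_k}(\cdot,\partial B_a)^2$, and performs the affine rescaling $\check u(\xi)=\lambda u(A^{-1}\xi)+\eta\xi_1$ with $\lambda_k=\mc R_k(z_k^o)$ and $\eta_k=(1+l)\log\alpha_k+\log\mathbb F(z_k^o)$. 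The normalizations $\check{\mathbb F}(\check z^o_k)=1$ and $\check{\mathbb F}^{1/2}\check{\mc R}_k(\check z^o_k)=1$ are what turn the awkward ratio weight into something workable; Lemma~\ref{lemma_7.16a} is then invoked \emph{after} the rescaling (where $\check{\mathbb F}_\diamond\geq1$ and $\check d_k\to\infty$) to kill $\check{\mathbf\Psi}$, Theorem~\ref{theorem_7.4.1} gives $C^{3,\alpha}$-convergence of $\check f_k$, and a Bernstein-type argument shows $\check{\mc K}_\infty\equiv0$, contradicting the normalization via \eqref{eqn_4.28}.

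Your proposal, by contrast, tries to run the maximum-principle estimate for $P=\exp(\kappa\mathbb F^a)\sqrt{\mathbb F}\,\mathbf\Psi$ directly on $B_a(\fkz_o)$. Two things go wrong. First, $\mathbb K$ contains the fractional powers $\|\nabla Ric\|^{2/3}_{\mathcal G_f}$ and $\|\nabla^2 Ric\|^{1/2}_{\mathcal G_f}$; there is no usable Bochner inequality for $\square\log(\mathbf\Psi+\mathbb K+\|\nabla\log|\mathbb A|\|^2)$, and the alternative you mention --- bounding $\mathbb K$ via Theorem~\ref{theorem_4.2.1} --- gives control of $\mathbb K\,d_u^2(\cdot,\ell)$, which degenerates at the divisor $Z_\ell$ precisely where the weight $\mathbb F^{1/2}/\max\mathbb F^{1/2}$ in \eqref{eqn_7.2} does \emph{not}. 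Theorem~\ref{theorem_5.0.1} is strictly stronger near the divisor and cannot be deduced from Theorem~\ref{theorem_4.2.1} by a comparison of weights. Second, the direct maximum principle (in the form of Lemma~\ref{lemma_7.16a}) yields a bound on $\mathbb F^{1/2}\mathbf\Psi$ whose right-hand side involves $\mathbb F_\diamond^{1/2}$, $\mathbb F_\diamond^{1/3}$, $\mathbb F_\diamond^{1/4}$; ``dividing by $\max_{B_a}\mathbb F^{1/2}$'' leaves residual negative powers $\mathbb F_\diamond^{-1/6}$, $\mathbb F_\diamond^{-1/4}$, so the resulting constant is \emph{not} independent of $\mathbb F$, as $\mff C_4$ in \eqref{eqn_7.2} must be. The blow-up normalization $\check{\mathbb F}(\check z^o_k)=1$ is exactly the device that removes this dependence, and it has no analogue in a single-scale maximum-principle argument. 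Your observations about the first-order $\mathbb D$-terms in \eqref{eqn_4.3} are apt (and the paper does address exactly that point via \eqref{eqn_4.26}--\eqref{eqn_4.27}), but they are a secondary issue compared with the missing blow-up structure.
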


\def \fkz{\mathfrak{z}}

\subsection{Uniform Control of Sections}\label{sect_7.3a}

In order to use affine blow-up technique we need to control sections (see Subsection \S7.2\cite{CLS-1}).  For the generalized Abreu equation we need more arguments due to the presence of the function
$\mathbb D$.

We consider functions
$
u\in \mc C^\infty(\halfplane,v_{\halfplane} )
$; namely $u=\xi_1\log\xi_1 + \xi_2^2+ \psi$ is strictly convex
for some $\psi\in C^\infty(\halfplane)$.
We assume, in addition,
\begin{equation}\label{eqn_7.6}
\Theta_u(p) d_u^2(p, \p \halfplane)\leq \mff C_2,
\end{equation}
where $d_{u}(p,\p \halfplane)$ is the   distance  from $p$ to $\p \halfplane$
with respect to the Calabi metric $G_{u}.$

Let $\fkz^\circ \in
\CHART_{\halfplane}$ be any point such that $d(\fkz^\circ,Z)=1$ and
$\fkz^\ast\in B_1(\fkz^\circ)\cap Z$, where $d (\fkz^\circ,Z)$ is the   distance
from $\fkz^\circ $ to $Z$ with respect to the   metric $\mathcal G_{f}.$
Without loss of generality, we assume that $\fkz^\circ$ is a representative point
of its orbit and assume that it is on $\t$.
Let $p^\circ$ and $p^\ast$ be
their images of moment map $\tau_f$, respectively.

By adding a linear function we normalize $u$ such that $p^\circ$ is the minimal point of $u$, i.e.,
 \begin{equation}\label{eqnc_7.4}
  u(p^\circ)=\inf u.
 \end{equation}
Let $\check p$ be the minimal point of $u$ on $\p \halfplane$ which is the boundary of $\halfplane$.
By adding some constant to $u$, we may require that
\begin{equation}\label{eqnc_7.5}
u(\check p)=0,
\end{equation}
and, by a coordinate translation, we may assume that \begin{equation}\label{eqnc_7.6} \xi(\check p)=0.
\end{equation}
See Figure 1.
 \begin{figure}
\includegraphics[height=2in]{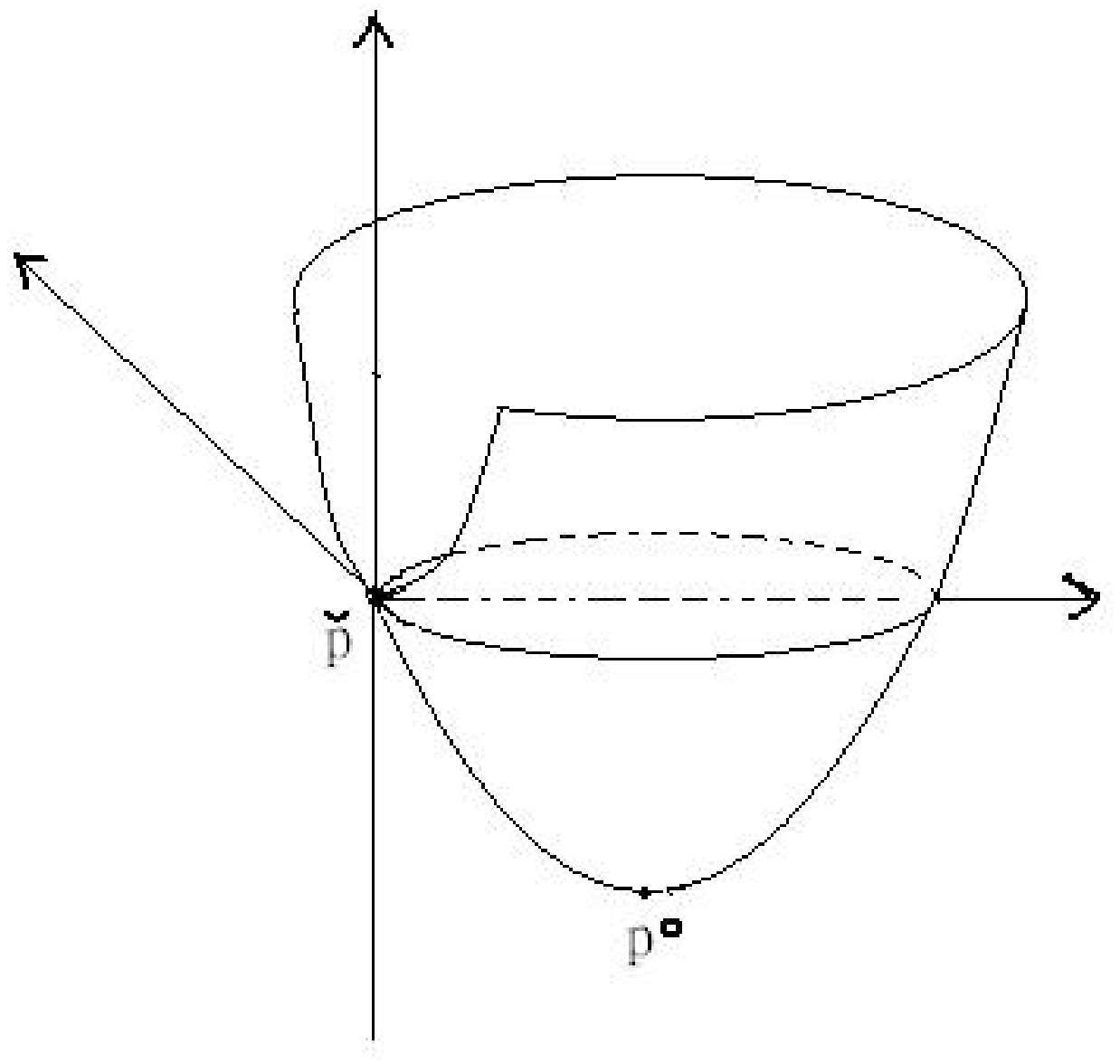}
 \caption{\label{fig1}}
 \end{figure}
Set $z^\ast=\nabla ^u (p^\ast)$ and
$$S_0:=\left\{(-\infty,x_2)\in \t_2 \;|\; \left|\int^{x_2(z^\ast)}_{x_2} \sqrt{f_{22}}dx_2\right|\leq 1 \right\}.$$
By a coordinate transformation
\begin{equation}\label{eqnc_7.3}A(\xi_1,\xi_2)=(\xi_1,\beta\xi_2),\end{equation}
we can normalize $u$ such that
\begin{equation}\label{eqn_7.7}
\left|S_0\right|= 10.
  \end{equation}

We say
$(u,p^\circ, \check p)$ is a {\em minimal-normalized triple}
if $u$ satisfies \eqref{eqnc_7.4}, \eqref{eqnc_7.5}, \eqref{eqnc_7.6},  \eqref{eqn_7.7}, and
$d(p^\circ, \p \halfplane)=1.$

\begin{definition}\label{definition_7.3.3}
Let $(u,p^\circ,\check p)$ be a minimal-normalized triple. Let
$N $ be a constant sufficiently large.
We say that $(u,p^\circ, \check p)$ is a {\it bounded-normalized triple}
if
 \begin{equation}
 \mc K(z)\leq 4, \quad\text{for any } z\in \tau_f\inv(B_N(p^\circ)),\label{eqn_7.8}
 \end{equation}
and
 \begin{equation}
 \frac{1}{4}\leq \frac{W(z)}{W(z')} \leq 4\quad\text{for any } z,z'\in \tau_f\inv(B_N(p^\circ)).\label{eqn_7.9}
 \end{equation}
\end{definition}

Then by the same argument in Subsections \S 7.2-\S 7.3 \cite{CLS-1},  we can prove the following result.
In fact, only Lemma 7.6 in \cite{CLS-1} needs more explanation, which we gave in  \cite{LLS-1}.

\begin{theorem}\label{theorem_7.4.1} Let $(u\indexm, p^\circ\indexm,
\check p\indexm)$ be a sequence of
bounded-normalized triples. Suppose that $\lim_{k\to
0} |\mathbb S(u\indexm)|\to 0$.
 Then,
 \begin{enumerate}
 \item[(1)]  $p^\circ\indexm$ converges to a point $p^\circ_\infty$ and $u_k$
$C^{3,\alpha}$-converges to a strictly convex function $u_\infty$
in $D_{s}(p^\circ_{\infty})
\subset\halfplane$, where $s$ is a constant independent of $k$;
\item[(2)]
 $\fkz^\ast\indexm$ converges to a point $\fkz^\ast_\infty$ and $f \indexm$
   $C^{3,\alpha}$-converges to a
 function $f_\infty $ in $D_{a_1}(\fkz_\infty^\ast)$,
where   $a_1>0 $ is a constant  independent of $k.$
 \end{enumerate}
\end{theorem}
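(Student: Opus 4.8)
The plan is to adapt the two-part compactness argument from Subsections \S 7.2--\S 7.3 of \cite{CLS-1} to the generalized Abreu equation, tracking carefully the places where the function $\mathbb D$ intervenes. The hypothesis $\lim_k |\mathbb S(u\indexm)| \to 0$ together with $h_G = \sum_i \sigma_i (\log \mathbb D)_{\xi_i}$ being a fixed smooth function on $\bar\Delta$ forces $A_k = \mathbb S(u\indexm) - h_G \to -h_G$ in $C^0$, so the right-hand side of \eqref{eqn 2.7} stays uniformly bounded. First I would record that a bounded-normalized triple gives, via \eqref{eqn_7.8}, a uniform $\mathcal K$-bound on $\tau_f\inv(B_N(p^\circ))$; this is exactly the ingredient needed to run the affine blow-up / differential-inequality machinery on the half-plane. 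The normalization conditions \eqref{eqnc_7.4}--\eqref{eqn_7.7} fix the scaling so that no degeneration occurs at the boundary scale, and \eqref{eqn_7.9} controls $W(z) = 64\det(\partial^2 f/\partial z^i \partial \bar z^j)$ on the relevant ball, hence controls $\mathbb F_\Delta = \mathbb D \cdot W$ up to the uniform bounds on $\mathbb D$.

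For part (1), I would show $p^\circ\indexm$ stays in a fixed compact subset of $\halfplane$ (using $d(p^\circ\indexm, \p\halfplane) = 1$ and the normalization $\xi(\check p\indexm) = 0$ together with the $L^\infty$-bound from Theorem \ref{theorem_3.2} applied on the normalized domain), extract a convergent subsequence $p^\circ\indexm \to p^\circ_\infty$, and then invoke a local $C^{3,\alpha}$-estimate for $u_k$ near $p^\circ_\infty$. The estimate comes from: the uniform Hessian-determinant bounds of Lemma \ref{lemma 2.5} and Lemma \ref{lemma_2.8} (which give two-sided control of $\det(u_{ij})$ away from $\partial\Delta$), the uniform $\mathcal K$-bound from \eqref{eqn_7.8}, and interior Schauder/Calabi-interior estimates applied to the equation \eqref{eqn 2.13} — noting that the extra first-order term $-\sum f^{ij}(\log\mathbb D)_i(\log\mathbb F_\Delta)_j$ is harmless because $\sup_{\Omega^*}|\nabla_\xi \log\mathbb D_k|$ is bounded (as in Theorem \ref{theorem_5.1}). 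This yields a strictly convex $u_\infty$ on a fixed disk $D_s(p^\circ_\infty) \subset \halfplane$.

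For part (2), I would translate the convergence of $u_k$ into convergence of the Legendre transforms $f_k$, and then into convergence in the complex coordinates $z^1 = e^{(x_1 + \sqrt{-1}y_1)/2}$, $z^2 = x_2 + \sqrt{-1}y_2$. The point $\fkz^\ast\indexm \in Z$ lies in $B_1(\fkz^\circ\indexm)$ with $\fkz^\circ\indexm$ on $\t$, so its moment-map image $p^\ast\indexm$ lies in $B_N(p^\circ\indexm) \cap \ell$; passing to a subsequence $\fkz^\ast\indexm \to \fkz^\ast_\infty$. Near the divisor $Z$ one uses that $f_k$ extends smoothly across $Z$ (as recalled from \cite{CLS-1}, \cite{CLS-2}), and that $\mathbb F_\ell = W_\ell \mathbb D$ satisfies \eqref{eqn_4.3}, a linear equation with bounded first-order term; combined with the bound \eqref{eqn_7.9} and the $\mathcal K$-bound this gives $C^{3,\alpha}$-estimates for $f_k$ on a fixed ball $D_{a_1}(\fkz^\ast_\infty)$, so $f_k \to f_\infty$ there.

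The main obstacle I expect is exactly the point flagged in the paper's introduction: controlling the first-order terms involving $\nabla \log \mathbb D$ uniformly along the blow-up, i.e.\ showing that rescaling does not amplify them and, in the limit, that $\frac{\partial \log \mathbb D_k}{\partial \xi_i} \to 0$ so that the limit equations revert to the honest Abreu equation. Concretely, the delicate step is the analogue of Lemma 7.6 in \cite{CLS-1} — the uniform control of the sections $S_0$ and the passage from the Calabi-metric geometry to Euclidean half-disks — where the presence of $\mathbb D$ changes the differential inequality satisfied by $P = \exp(\kappa \mathbb F_\ell^a)\sqrt{\mathbb F_\ell}\,\mathbf{\Psi}_\ell$; this is the piece for which the extra explanation was supplied in \cite{LLS-1}, and I would cite that argument rather than reproduce it. Everything else is a routine, if lengthy, transcription of the toric-variety proof with $\det(f_{ij})$ replaced throughout by $\mathbb F_\Delta = \mathbb D\det(f_{ij})$.
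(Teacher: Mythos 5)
Your overall plan --- transplant \S 7.2--\S 7.3 of \cite{CLS-1} and appeal to \cite{LLS-1} for the analogue of Lemma~7.6 --- is exactly what the paper does (its entire ``proof'' of this theorem is a one-sentence citation), so you have identified the right strategy and the right hard spot. Two of your supporting steps are, however, in the wrong framework and would fail as written.

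First, Theorem \ref{theorem_7.4.1} is a statement about bounded-normalized triples on the half-plane $\halfplane$, obtained after an affine blow-up. In that setting $\mathbb D$ is replaced by $\check{\mathbb D}_k=\lambda_k^{l}\mathbb D$, the coordinates are stretched by $\lambda_k,\beta_k\to\infty$, and consequently the $\nabla\log\mathbb D$-terms --- and hence the $h_G$-type terms --- scale like $\lambda_k^{-1}\to 0$. The hypothesis $|\mathbb S(u\indexm)|\to 0$ therefore encodes $\check{\mathbb A}_k\to 0$ \emph{together with} $\nabla\log\check{\mathbb D}_k\to 0$, so that the limit equations degenerate to the scalar-flat ordinary Abreu equation; this is exactly what makes the Bernstein-type rigidity of \cite{CLS-1} available in the limit (compare claims (2), \eqref{eqn_4.26}, \eqref{eqn_4.27} in the proof of Theorem \ref{theorem_5.0.1}). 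Your reading ``$A_k\to -h_G$'' treats $h_G$ as a \emph{fixed} function on $\bar\Delta$, which is not the correct setting here; it would leave a nontrivial right-hand side in the limit and destroy the rigidity step.

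Second, you propose to obtain the $L^\infty$ and determinant controls from Theorem \ref{theorem_3.2} and Lemmas \ref{lemma 2.5}, \ref{lemma_2.8}. Those results are all proved on the Delzant polytope $\Delta$, under a uniform stability hypothesis, and use the polytope geometry essentially; none of them applies on $\halfplane$. In the blow-up framework the needed controls are supplied directly by the bounded-normalized-triple conditions \eqref{eqn_7.8}--\eqref{eqn_7.9} together with the normalizations \eqref{eqnc_7.4}--\eqref{eqn_7.7}, and the compactness of $p^\circ\indexm$ comes from the section-size normalization $|S_0|=10$ --- which is precisely the content of the Lemma-7.6 analogue proved in \cite{LLS-1}, not a consequence of the polytope estimates you cite.
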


\def \half{\frac{1}{2}}

\subsection{Proof of Theorem \ref{theorem_5.0.1}}\label{sect_7.3b}

\begin{proof}[Proof of Theorem \ref{theorem_5.0.1}]
Set
$$
\mc W_f=\frac{\mathbb F}{\max_{B_a(\fkz_o)}\mathbb F},\;\;\;
\mc R_f= \mc K(f)+\mathbb K(f) +\|\nabla\log |\mathbb A(f)|\|^2+ \mathbf{\Psi}(f).
$$
We note that $\mathbb K(f)$ is extra compared with the corresponding
expression $\mc R_f$ on toric varieties as in \cite{CLS-1}.

Suppose that the theorem is not true. Then, there is a sequence of functions
$f\indexm$  and a sequence of points  $\fkz_k'\in B_{a/2}(\fkz_o)$  such that
\begin{equation}\label{eqn_7.18}
   \mc W_k^{\half} \mc
  R_k(\fkz_k')a^2   \to \infty\;\;\; \mbox{ as } k\to \infty,
\end{equation}
where $\mc W_k=\mc W_{f\indexm}$ and $ \mc R_k=\mc R_{f\indexm}$.
Note that $\mc W_k\leq 1$  in $B_a(\fkz_o)$.
Consider the function
$$F\indexm(z )=  \mc W_k^{\half} \mc R_k(z)
 [ d_{f\indexm}(z,\partial
B_a(\fkz_o))]
 ^2$$
defined in $B_a(
 \fkz_o),$ where $B_a(
 \fkz_o)$ and $d_{f\indexm}(z,\partial
B_a(\fkz_o))$ denotes the geodesic ball and the geodesic distance with respect to the
metric $\mathcal G_{f\indexm}$. Suppose that it
attains its maximum at $z^o\indexm$. By \eqref{eqn_7.18}, we have
\begin{equation}\label{eqn_7.20}
\lim_{k\to\infty}F\indexm(z^o\indexm)\to +\infty, \;\;\;\;
\lim_{k\to\infty}\mc W_k^{\half} \mc R_k(z^o\indexm)=+\infty, \;\;\;\lim_{k\to\infty} \mc R_k(z^o\indexm)=+\infty.
\end{equation}
Set $$d\indexm=\frac{1}{2} d_{f\indexm}(z^o\indexm,\partial B_a
(\fkz_o)).$$ Then, 
$$
{\mathbb F_k}^{\frac{1}{2}}\mc R_k \leq 4 {\mathbb F_k}^{\frac{1}{2}}\mc R_k(z_k^o)
\quad\text{in } B_{d_k}(z_k^o).
$$
Let $z_k^\ast\in Z$ be the point such that $ d_{f_{k}}(z_k^o,
z_k^\ast)=d_{f_{k}}(z_k^o,Z ).$ Denote by $q_k^o,q_k^\ast, \ldots,$ the  images
of $ z_k^o, z_k^\ast, \ldots,$ under the moment map $\tau_f$.

Now we perform the affine
blow-up analysis to derive a contradiction.
We consider the following
affine transformation on $u$:
\begin{equation}\label{eqn_4.16}
\check  u(\xi)=\lambda u(A\inv(\xi))+\eta\xi_1,
\end{equation}
where $ A(\xi_1,\xi_2)=(\lambda\xi_1,\beta \xi_2  ). $ Let $\check
f=L(\check u)$.
Set
\begin{equation*} 
\lambda_k= \beta_k^2=\mc R_k(z_k^o),\;\;\;
\eta_k= (1+l)\log \alpha_k+\log \mathbb F(z_k^o).
\end{equation*}
where $l$ is the number of $R_{M^{+}}$. Corresponding to the $u_k\to \check{u}_k$, we also have changes $f_k\to \check{f}_k$, $d_k\to \check{d}_k$, etc.
Then,
$$\check x^{1}=x^{1},\quad\check x^{2}=\frac{\lambda}{\beta}x^{2}.$$ This transformation induces
$$
d\check x^{1}=dx^{1},\quad d\check x^{2}=\frac{\lambda}{\beta}dx^{2},\quad
\check  \nu^{1}= \nu^{1},\quad\check \nu^{2}=\frac{\lambda}{\beta} \nu^{2}.
$$
Let $\{\check H_{i}\in \mathfrak{t},i=1,2\}$ be the dual of $\{\check \nu^{i}\}$.Then,
$$\check H_{1}=\tilde H_{1},\quad\check H_{2}=\frac{\beta}{\lambda}\tilde H_{2}.$$
Let $\check M_{\alpha}^{j}$ be the constants such that
$H_{\alpha}|_{\mathfrak{t}}=\sum_{j=1}^{2}\check M_{\alpha}^{j}\check H_{j}$. Then,
$$
 \check D_{\alpha}=\sum \check M_{\alpha}^{j}  \frac{\p \check f}{\p \check x^{j}}
 =\lambda\sum   M_{\alpha}^{j}  \frac{\p f}{\p x^{j}}=\lambda D_{\alpha}.
$$
It induces the metric transformation
\begin{align*}
\mathcal G_{\check f}&= \sum_{i,j=1}^{2}(\check u_{ij}d\check \xi_{i}\otimes d\check \xi^{j}
+\check u^{ij}\check \nu^{i}\otimes \check\nu^{j}) +\sum_{\alpha\in R_{M^{+}}}
\check D_{\alpha}(dV^{\alpha}\otimes dV^{\alpha}+dW^{\alpha}\otimes dW^{\alpha})\\
&=\lambda \mathcal G_{u},
 \end{align*}
where $\check u_{ij}=\frac{\p^2 \check u}{\p \check \xi_{i}\p \check \xi_{j}} $
and $(\check u^{ij})$ denotes the inverse of $(\check u_{ij}).$ Obviously,
$\check{\mathbb D}=\lambda^{l}\mathbb D$.

  A direct calculation yields
$$
 \check {\mathbb F}(z)=\lambda^{l+1} e^{-\eta}  \mathbb F( B\inv_{\cplane}z),\;\;\;\;
 \check  {\mathbf{\Psi}}(z)=\lambda\inv \mathbf{\Psi}(B\inv_{\cplane}z), \;\;\;\;
 \check  {\mathbb{K}}(z)=\lambda\inv   {\mathbb{K}}(B\inv_{\cplane}z),$$
and
 $$
\|\nabla\log |\check {\mathbb A}|\|^2_{\check f}(z)=\lambda\inv \|\nabla\log |{\mathbb A}|\|^2_{f}(B\inv_{\cplane}z),
\;\;\;\;\check {\mathcal R}(z)=\lambda^{-1} {\mathcal R}(B\inv_{\cplane}z).$$

We claim that
\begin{enumerate}
\item[(1)] $\lambda_k\to \infty$, $ \check  d_k\to \infty$ as $k\to \infty$;
\item[(2)]$\lim\limits_{k\to\infty}\max\limits_{B_{\check d_k}(\check z^o_k)}
|\check { \mathbb A}_k| = 0$;
\item[(3)] $\mathbb{  \check{\mathbb F}}(\check z^o_k)=1$;
{\item[(4)] $\mathbb{ \check{\mathbb F}}_k^\half\check {\mc R}_k(\check z_k^o)=1$ and
$   \check{\mathbb F}_k^\half\check {\mc R_k}\leq 4 \check{\mathbb F}_k^\half\check{\mc R}_k(z_k^o)=4$ in
$B_{\check d_k}(\check z^o_k)$;
\item[(5)] $ \check{\mathbb F}_k^{\half}\check {\mathbf{\Psi}}_k\to 0$ in
$B_{\check d_k/2}(\check z^o_k)$};
\item[(6)] $\check z_k^\ast \in B_{\sqrt{\mff C_2}}(\check z_k^o).$
 \end{enumerate}

 Note that the proof of (1)-(4) and (6) are the same as in \cite{CLS-1}. To prove (5),
 we need the following lemma, the proof of which can be found in \cite{LSZ}.

\begin{lemma}\label{lemma_7.16a} Let $o\in \tau^{-1}(\ell^{o})$ and  $B_{a}(o) $ be a closed
geodesic ball of radius $a$ centered at o with respect to $\mathcal G_{f}$. Let $\mathbb F_\diamond:=\max\limits_{B_{a}(o)} \mathbb F.$
Suppose that
\begin{equation} \label{eqn_7.31a}
\min\limits_{B_{a}(o)}|\mathbb A|\neq0
\quad\text{and}\quad  \mathbb F ^{\half} (\mathbb K+\|\nabla \log |\mathbb A|\|^2_f+ \mathbf{\Psi})\leq 4
\quad\text{in }B_{a}(o).\end{equation}
Then,
\begin{equation}\label{eqn_7.32a} {\mathbb F ^{\half}} \mathbf{\Psi}
 \leq \mff C_{5}\left[\mathbb F _\diamond ^{\half}\max_{B_{a}(o)}|\mathbb A |
 +  \mathbb F _\diamond ^{\frac{1}{3}}\max_{B_{a}(o)}|\mathbb A |^{\frac{2}{3}}  +
a\inv \mathbb F _\diamond^{\frac{1}{4}}+ a^{-2}\mathbb F _\diamond ^{\half} \right]
\quad\text{in }
B_{a/2}(o),
 \end{equation}    where $\mff C_5$ is a constant.
 \end{lemma}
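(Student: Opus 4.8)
The plan is to prove Lemma \ref{lemma_7.16a} by a differential-inequality argument on a suitably weighted version of the function $\mathbf{\Psi} = \|\nabla\log\mathbb F\|^2_f$, following the corresponding estimate on toric varieties in \cite{CLS-1} but tracking the extra first-order terms produced by $\mathbb D$. Recall that $-\square\log\mathbb F = \mathbb A$ with $\mathbb A = A + \sum_k (\log\mathbb D)_k b^1_k$, so $\log\mathbb F$ satisfies a linear equation with bounded (in fact $C^3$-controlled) right-hand side. Differentiating this equation, the Bochner-type formula for $\square\mathbf{\Psi}$ produces, schematically, a good term $\|\nabla^2\log\mathbb F\|^2_f$, a curvature term controlled by $\mathbb K$, a term $\langle\nabla\mathbf{\Psi},\nabla\log\mathbb F\rangle$, and inhomogeneous terms involving $\nabla\mathbb A$ and $\nabla^2\mathbb A$; the key point is that under the hypothesis $\mathbb F^{1/2}(\mathbb K + \|\nabla\log|\mathbb A|\|^2_f + \mathbf{\Psi})\le 4$ on $B_a(o)$, all the dangerous terms are a priori bounded once multiplied by $\mathbb F^{1/2}$, so the problem linearizes.

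The key steps, in order, are as follows. First, I would set $P = \exp(\kappa\mathbb F^{\mathsf a})\sqrt{\mathbb F}\,\mathbf{\Psi}$ as in \eqref{eqn_f_5.3}, for constants $\kappa,\mathsf a>0$ to be chosen, and apply the maximum principle to $P\cdot[d_f(\cdot,\partial B_a(o))]^2$ on $B_a(o)$ — this is the standard Chen--Li--Sheng device for turning a pointwise estimate into one valid on the half-ball $B_{a/2}(o)$. Second, at an interior maximum $z_0$ of this product, I would compute $\square\log(P\, d_f^2)\ge 0$; the term $\square\log\sqrt{\mathbb F} = -\tfrac12\mathbb A$ is controlled, the term $\square(\kappa\mathbb F^{\mathsf a})$ is designed (choosing $\mathsf a$ small, e.g. $\mathsf a = \tfrac14$ or $\tfrac16$) to dominate and absorb the cross term $\langle\nabla\mathbf{\Psi},\nabla\log\mathbb F\rangle_f$ after a Cauchy--Schwarz split, and $\square\log d_f^2$ contributes the usual $-C/a^2$ via the Laplacian comparison using the Ricci bound implicit in $\mathbb K\le 4\mathbb F^{-1/2}$. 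Third, I would estimate $\square\log\mathbf{\Psi}$ from below by the Bochner formula: the ratio $\|\nabla^2\log\mathbb F\|^2_f/\mathbf{\Psi}$ is bounded below (away from the trivial case) by $\mathbf{\Psi}$ modulo lower-order corrections, the curvature term costs $-C\mathbb K$, and the inhomogeneous terms cost $-C(\|\nabla\mathbb A\|_f + \|\nabla^2\mathbb A\|_f + \|\nabla\mathbb A\|_f^2/\mathbf{\Psi})$. Fourth, I would feed in the hypothesis to convert every term into one bounded by a multiple of $\mathbb F_\diamond^{-1/2}$ or $a^{-2}$, collect powers of $\mathbb F$, and solve the resulting algebraic inequality at $z_0$ for $\mathbb F^{1/2}\mathbf{\Psi}(z_0)$, obtaining the right-hand side of \eqref{eqn_7.32a}; evaluating the maximized product at any $z\in B_{a/2}(o)$ then gives the stated bound with $d_f^2\ge a^2/4$ there.

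The bookkeeping of the extra terms from $\mathbb D$ is where the real work lies. In \eqref{eqn_4.3} the identity $-\square\log\mathbb F = \mathbb A$ already absorbs $\mathbb D$ into $\mathbb A$, so $\mathbb A$ plays the role that the scalar curvature played on toric varieties; the hypotheses of Theorem \ref{theorem_5.0.1} (and hence the setting in which this lemma is invoked) bound $\mathbb A$ and its first three Euclidean derivatives, which translate into $f$-norm bounds on $\nabla\mathbb A$, $\nabla^2\mathbb A$ at the cost of powers of $\det(f_{ij})$, i.e. of $\mathbb F$ up to the bounded factor $\mathbb D$. One must also carry the term $\|\nabla\log|\mathbb A|\|^2_f$ through the Bochner computation, since it appears in the hypothesis on the same footing as $\mathbf{\Psi}$ and $\mathbb K$; this is exactly the quantity $\mc R_f$ is built to control. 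The main obstacle, then, is not any single estimate but verifying that the choice of $\kappa$ and $\mathsf a$ (copied from \cite{CLS-1}) still makes the "good" term $\square(\kappa\mathbb F^{\mathsf a})$ beat the enlarged collection of cross terms — in particular the new term $\langle\nabla(\log\mathbb D\cdot b^1),\nabla\log\mathbb F\rangle$-type contributions — without any circular dependence on the constant $\mff C_5$ we are trying to produce; I expect this to go through verbatim as in \cite{CLS-1} because every new term carries a factor of $\log\mathbb D$-derivatives which are uniformly bounded on $\bar\Delta$, but it does require re-checking the sign and size of each term rather than merely citing the toric case.
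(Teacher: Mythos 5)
The paper does not actually prove this lemma; it defers the proof entirely to the companion preprint \cite{LSZ} (``the proof of which can be found in \cite{LSZ}''), so there is no in-text argument against which to compare. That said, your sketch is the expected template: a Chen--Li--Sheng style maximum-principle argument on the weighted quantity $P=\exp(\kappa\mathbb{F}^{\mathsf a})\sqrt{\mathbb{F}}\,\mathbf{\Psi}$ --- which the paper introduces in \eqref{eqn_f_5.3} precisely for this purpose --- multiplied by the squared geodesic distance to $\partial B_a(o)$ to localize, with the Bochner identity applied to $\mathbf{\Psi}=\|\nabla\log\mathbb F\|^2_f$, the equation $-\square\log\mathbb F=\mathbb A$ feeding in the inhomogeneous terms, and the hypothesis $\mathbb F^{1/2}(\mathbb K+\|\nabla\log|\mathbb A|\|^2_f+\mathbf{\Psi})\le 4$ supplying the curvature and gradient controls. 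This is almost certainly how \cite{LSZ} proceeds, and your observation that the only genuinely new feature over the toric case is that $\mathbb A$ now carries $\log\mathbb D$-derivatives, uniformly bounded on $\bar\Delta$, is the correct reading of where the $\mathbb D$-bookkeeping enters.

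Two soft spots. First, you write that at the interior maximum $z_0$ one has $\square\log(P\,d_f^2)\ge 0$; the sign is backwards. The maximum principle gives $\square\log(P\,d_f^2)(z_0)\le 0$, and what one proves analytically is a \emph{lower} bound on $\square\log(P\,d_f^2)$ containing the good $\mathbf{\Psi}^2$-type term; the two inequalities combined produce the algebraic bound. Your surrounding text shows you understand this, but the sentence as written is wrong. Second, the sketch does not explain why the conclusion contains exactly the three terms $\mathbb F_\diamond^{\half}\max|\mathbb A|$, $\mathbb F_\diamond^{1/3}\max|\mathbb A|^{2/3}$ and $a\inv\mathbb F_\diamond^{1/4}$ (plus $a^{-2}\mathbb F_\diamond^{\half}$). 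That exponent pattern is exactly what falls out of the Young-inequality splitting of the cross terms against the good quadratic term, with the exponent $\mathsf a$ in $\exp(\kappa\mathbb F^{\mathsf a})$ tuned so the weights close; saying merely ``collect powers of $\mathbb F$'' elides the part of the argument that determines the shape of the right-hand side. Neither point is a structural flaw in your approach, but both would need repair before the sketch becomes a proof, and neither can be checked against the present paper since the paper itself supplies no proof.
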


By (5), we know that, for any fixed $R$ and for any small constant $\epsilon>0$,
\begin{equation}\label{eqn_4.19}
1-\epsilon \leq  \check{ \mathbb F}_{k}(z)\leq 1+\epsilon,\;\;\;\;\;
 \check{ \mathbf \Psi}_{k} \leq \epsilon\quad\text{for any } z\in B_R(\check z^o_k),
\end{equation}
when $k$ is large enough. By (5) and (6), \eqref{eqn_4.19} also holds in $B_R(\check z^\ast_k).$

It follows from (4) and \eqref{eqn_4.19} that
\begin{equation}\label{eqn_4.20}
\check{\mc R}_{k}\leq 5  \quad\text{for any } z\in B_R(\check z^o_k).
\end{equation}
To derive a contradiction we need the  convergence of $\check f_k.$ We discuss two cases.

{\bf Case 1.} There is a constant $C'>0$ such that
$\check d\indexm(\check z^o\indexm,\check z^\ast\indexm)\geq C',$

{\bf Case 2.} $\lim\limits_{k\to\infty}\check d\indexm(\check z^o\indexm,\check z^\ast\indexm)=0.$

\v
By affine transformations as in  \eqref{eqn_4.16}
with $\lambda=1,$ we can  minimal-normalize $(\check u, \check  p^\circ, \check {\check p})$. To simply notations we still denote them by  $ \check
u,\check d$ and etc,   after this transformation.
   Since  $\check {\mathcal R},\check {\mathcal K},\check {\mathbb K},\tilde{\bf \Psi}$ and  $\check  W(z)/\check  W(z')$ for any $z,z'\in B_{N}(\tilde z^o)$  are invariant under these transformations,  then $(\check u, \check  p^\circ, \check {\check p})$ satisfies the assumption of Theorem \ref{theorem_7.4.1}.
 \v

 As in \cite{CLS-1},  for both cases, by Theorem \ref{theorem_7.4.1},  we have $\check z^o\indexm$ converges to a
point $\check z^\circ_\infty $ and $\check f_k$  $C^{3,\alpha}$-converges to a
function $\check f_\infty$  in a neighborhood of $D_{b_1}(\check z^\circ_\infty),$ and
\begin{equation}\label{eqn_4.25}
\check W\equiv const.,\;\;\;\;\;\;C_1\inv\leq \check
f_{i\bar j}\leq C_1\quad\text{in }D_{b_1}(\check z^\circ_\infty),
\end{equation}  where $C_1>0$ is a constant and $b_1$ is a constant independent of $k$. Here
\eqref{eqn_4.25}
follows from \eqref{eqn_4.19} and the $C^{3,\alpha}$-convergence of $\check f_k$.

By the same argument of \cite{CLS-1}, we check that
\begin{equation}\label{eqn_4.26}
\lim\limits_{k\to\infty}\max\limits_{D_{b_1}(\check z^\circ_\infty)}\|\nabla \log | \check{\mathbb A}_{k}|\|^2_{\check f_k}=0.
\end{equation}
Note that
$$
\frac{\p }{\p \check z^{i}} \log  \check {\mathbb  D}_{k}
=\sum\frac{\p \check \xi_{j}}{\p \check z^{i}} \frac{\p\log  \check {\mathbb  D}_{k}}{\p \check \xi_{j}}.
$$
Then, by \eqref{eqn_4.25}, we can check that
$$
\lim_{k\to\infty} \| \nabla \log  \check {\mathbb  D}_{k} \|=0,\;\;\;\;
 \lim_{k\to\infty} \|\nabla^2 \log  \check {\mathbb  D}_{k} \|=0.
$$
It follows that
\begin{equation}\label{eqn_4.27}
\lim_{k\to\infty}\mc {\check K}_k(\check z_k^o)=\lim_{k\to\infty}\mathbb {\check K}_k(\check z_k^o).
\end{equation}

Combining \eqref{eqn_4.19}, \eqref{eqn_4.26}, \eqref{eqn_4.27}  and
$\mc {\check R}_k(\check z_k^o)=1  $, we get, for $k$ large enough,
\begin{equation}
\label{eqn_4.28}2\mc {\check K}_k(\check z_k^o)\geq 1-\epsilon .\end{equation}
On the other hand,
we can conclude that $ \check{\mc K}\equiv 0 $ as in \cite{CLS-1}.   This contradicts
\eqref{eqn_4.28}.
The theorem is proved.
\end{proof}


\section{An Upper Bound of $H$}\label{sect_6}

In this section, we present
an upper bound of a function $H$ to be defined below.

We assume that $ \Delta\subset \mathbb R^{n}$ is a Delzant polytope and
that $M$ is the corresponding toric variety. Set
$$
\mc C^\infty(M,\omega_g)=\{f|f=g+\phi, \phi\in C^\infty(M) \mbox{ is } T^{n}\mbox{-invariant}\}.
$$
  Fix a large constant $K_o>0$. We set
$$
\mc C^\infty(M,\omega_g;K_o)
=\{f\in \mc C^\infty(M,\omega_o)|
|\mc S(f)|\leq K_o\}.
$$
Choose a local holomorphic coordinate system $z_1,...,z_n$ on $M$. Set
$$
H=\frac{\det(g_{i\bar{j}})}{\det(f_{i\bar{j}})}.$$
It is known that $H$ is a global function defined on $M$.
Set
$$\mathbb{H}=\frac{\mathbb{F}_g}{\mathbb{F}_f}=
\frac{\mathbb{D}_g}{\mathbb{D}_f}H,$$
where
$$\mathbb{D}_g:=(\nabla^g)^*\mathbb{D},\;\;\;\mathbb{D}_f:=(\nabla^f)^*\mathbb{D}.$$

Let $\mu:M\rightarrow \bar{\Delta}\subset \mathbb{R}^{n}$ be the moment map.
We introduce notations:
$$\aligned R_g&= \max\limits_{M}\left\{\left|\sum g^{ij}\left(\log \mathbb{F}_g\right)_{ij}\right|\right\},\\
\mathcal{D}&=\max\limits_{\bar{\Delta}}\left\{\left|\frac{\p }{\p \xi_j}(\log \mathbb D)\right|\right\},\\
\mathcal{R}&=\max\left\{ R_g, \mathcal{D}^2,diam(\Delta)\right\}.\endaligned$$
Li, Lian, and Sheng \cite{LLS-1} proved the following result.

\begin{theorem}\label{theorem 6.1}  For any $\phi\in C^{\infty}(M)$, there holds
\begin{equation}\label{eqn_3.2}
H \leq \mff C_{6} \exp
\left\{(2\mathcal{R}+1)\left(\max_M\{\phi\}-\min_M\{\phi\}\right)\right\},
\end{equation}
where $\mff C_{6}$ is a constant depending only on $n$, $\max|A|$ and $\mathcal R.$
\end{theorem}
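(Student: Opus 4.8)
The plan is to prove Theorem \ref{theorem 6.1} by a maximum-principle argument applied to a suitable test function built from $H$ (or its logarithm) and the Kähler potential $\phi$, in the spirit of the Yau-type estimates for the complex Monge--Amp\`ere equation. First I would compute $\square_f \log H$, where $\square_f$ is the complex Laplacian of the metric $\omega_f$. Writing $H=\det(g_{i\bar j})/\det(f_{i\bar j})$, we have $\log H=\log\det(g_{i\bar j})-\log\det(f_{i\bar j})$, and since on a toric manifold in log-affine coordinates $\log\det(f_{ij})$ is governed by \eqref{eqn 2.13}, I would use the relation $\mathbb F_f=\mathbb D\det(f_{ij})$ and the equation $-\sum f^{ij}(\log\mathbb F_f)_{ij}-\sum f^{ij}(\log\mathbb D)_i(\log\mathbb F_f)_j=A$ to express $\square_f\log\mathbb F_f$ in terms of $A$ and first-order terms in $\log\mathbb D$. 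The parallel identity for $\mathbb F_g$ contributes the term controlled by $R_g$. Combining, $\square_f\log\mathbb H$ becomes $A$ plus terms involving $\nabla\log\mathbb D$ and the difference of the Hessians of $\phi$, and one extracts from the arithmetic--geometric mean inequality a lower bound $\square_f\log\mathbb H\ge -C(\mathcal R) + (\text{positive multiple of})\,\square_f\phi$ type estimate, modulo trace terms $\operatorname{tr}_f\omega_g$.

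Second, I would introduce the auxiliary function $\Phi=\log\mathbb H-(2\mathcal R+1)\phi$ (the coefficient $2\mathcal R+1$ matching the constant in \eqref{eqn_3.2}) and examine its maximum over the compact manifold $M$. At an interior maximum point $p_0$, $\nabla\Phi=0$ and $\square_f\Phi\le 0$. The key is that $\square_f\phi = n - \operatorname{tr}_f\omega_g$ (from $\omega_f=\omega_g+\sqrt{-1}\partial\bar\partial\phi$), so the potentially bad term $-\operatorname{tr}_f\omega_g$ appearing in the estimate for $\square_f\log\mathbb H$ is absorbed by the corresponding term coming from $-(2\mathcal R+1)\square_f\phi$, provided the coefficient is chosen large enough relative to $\mathcal R$; here one also uses $\operatorname{tr}_f\omega_g\cdot\operatorname{tr}_g\omega_f\ge (\det(g)/\det(f))^{1/n}\cdot n^2$ type inequalities, or more simply that $\mathbb H$ and $H$ differ only by the bounded factor $\mathbb D_g/\mathbb D_f$, which lies between $(\min\mathbb D/\max\mathbb D)$ and its reciprocal. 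The inequality $\square_f\Phi(p_0)\le 0$ then forces $\log\mathbb H(p_0)$ to be bounded above in terms of $n$, $\max|A|$, $\mathcal R$, and $\phi(p_0)$; since $\Phi\le\Phi(p_0)$ everywhere, evaluating at an arbitrary point and rearranging gives $\log\mathbb H\le C + (2\mathcal R+1)(\phi - \min_M\phi)$, and using $\mathbb H\le(\max\mathbb D/\min\mathbb D)\,H$ and the fact that we may normalize $\phi$ by an additive constant yields \eqref{eqn_3.2}.

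The main obstacle I anticipate is the bookkeeping of the first-order terms $\sum f^{ij}(\log\mathbb D)_i(\cdot)_j$ in \eqref{eqn 2.13}, which have no counterpart in the classical toric (Abreu) case and must be controlled using $\mathcal D=\max_{\bar\Delta}|\partial_{\xi_j}\log\mathbb D|$ together with a Cauchy--Schwarz/Young inequality so that they are dominated by $\epsilon\operatorname{tr}_f\omega_g + C\mathcal D^2$; the quadratic dependence on $\mathcal D$ is precisely why $\mathcal R$ is defined with $\mathcal D^2$. A secondary technical point is passing between the coordinate expressions in $(x_1,\dots,x_n)$ (log-affine) and the holomorphic coordinates $(z_1,\dots,z_n)$, and ensuring the estimate localizes correctly near the divisors where $\phi$ and $f$ extend smoothly (as noted in Section~\ref{sec-2.4}); but since $H$ is a global function on the compact $M$ and the maximum principle is applied globally, this causes no real difficulty. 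Finally, one checks that all constants depend only on the claimed quantities $n$, $\max|A|$, and $\mathcal R$, which is immediate from the structure of the argument.
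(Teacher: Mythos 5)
The paper does not contain its own proof of Theorem~\ref{theorem 6.1}; it is quoted from the preprint \cite{LLS-1}. So no direct comparison is possible. Your Yau-style maximum-principle plan is the natural route and very plausibly tracks what \cite{LLS-1} does: the test function $\Phi=\log\mathbb H-(2\mathcal R+1)\phi$, the use of $\square_f\phi=n-\operatorname{tr}_f\omega_g$ to absorb the trace term, the reduction from $\mathbb H$ to $H$ via the bounded factor $\mathbb D_g/\mathbb D_f$, and the observation that the first-order terms coming from $\mathbb D$ in \eqref{eqn 2.13} are handled by Young's inequality at the cost of a $\mathcal D^2$ term (which explains why $\mathcal R$ carries $\mathcal D^2$) are all the right ingredients.

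One step is glossed over in a way that matters. You write that the contribution of $\mathbb F_g$ to $\square_f\log\mathbb H$ is ``controlled by $R_g$,'' but the term one actually gets is the $f$-trace $\sum f^{i\bar j}\bigl(\log\mathbb F_g\bigr)_{i\bar j}$, and to extract a bound of the form $-c\,\operatorname{tr}_f\omega_g$ from it one needs a \emph{matrix} inequality $\bigl(\log\mathbb F_g\bigr)_{i\bar j}\ge -c\,g_{i\bar j}$. The quantity $R_g$ as defined in this paper, $R_g=\max_M\bigl|\sum g^{ij}(\log\mathbb F_g)_{ij}\bigr|$, only bounds the $g$-trace (the sum of eigenvalues), and a trace bound does not in general give an eigenvalue bound. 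Of course $\log\mathbb F_g$ is a fixed smooth function on the compact $M$, so some matrix bound $c$ exists, but it is not obviously controlled by $R_g$ alone; as it stands there is a circularity in trying to bound $\operatorname{tr}_f\omega_g$ by a quantity whose estimation already needs $\operatorname{tr}_f\omega_g$. You should either (a) replace $R_g$ in your argument by the operator-norm bound of $\operatorname{Hess}\log\mathbb F_g$ relative to $g$ and note that the theorem's constant must implicitly control that, or (b) explain why the trace bound suffices here (for instance, a sign or dimension-$2$ reduction). A secondary point worth spelling out: the first-order term $\sum f^{ij}(\log\mathbb D)_i(\log\mathbb F_f)_j$ involves $\nabla\log\mathbb F_f$, which is not a priori bounded, and the way it becomes tractable is that at the interior maximum of $\Phi$ the condition $\nabla\Phi=0$ lets you replace $\nabla\log\mathbb F_f$ by $\nabla\log\mathbb F_g-(2\mathcal R+1)\nabla\phi$, both of which are controlled (the latter by $\operatorname{diam}\Delta$, also built into $\mathcal R$); this deserves to be stated explicitly rather than absorbed into ``Cauchy--Schwarz/Young.''
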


\def \minequiv{\operatorname*{\sim}\limits^{min}}

\section{A Lower Bound of $H$}\label{sect_7}

In this section, we present
a lower bound of the function $H$ defined in the previous section.
We first introduce a subharmonic function.


Let $\vartheta$ be a vertex and $\cplane^2_\vartheta$ be a coordinate chart associated to the
vertex $\vartheta$. Now consider an element $\mathsf{f}\in \mc C^\infty(M,\omega )$.
Let $f_\vartheta$ be its restriction to  $\CHART_\vartheta$.
We introduce a function
$$
F_\vartheta=\log \mathbb F_\vartheta+Nf_\vartheta.
$$

We now prove that $F_\vartheta$ is a  {\em subharmonic}  function, with respect to $\square$,
the Laplacian operator of the metric $\mathcal G_{f}.$
In \cite{LSZ} we proved the following result.

\begin{lemma}\label{lemma_7.1.2}
Choose $\{o,\nu^{i},\;i=1,...,n\}$ as a base of $\mathbb{R}^{n}$,
let $\bar{\Delta}\subset \{(\xi_1,...,\xi_{n})| \xi_1>0,\;\xi_2>0,\;...,\xi_{n}>0\}$ be a Delzant polytope satisfying
\begin{equation}
 \sum_{\alpha\in R_{M^{+}}}\frac{\left(\sum_{j=1}^{n} M_{\alpha}^{j}\right)diam(\Delta)}{D_{\alpha}}<\frac{n}{4}.
\end{equation}
Then there is a constant $N>0$, depending only on $n$, $\mathbb{D}$, $\Delta$,
and the position of $\Delta$ in $\mathbb{R}^n$, such that for any vertex $p$ of $\Delta$
$$
\square (\log \mathbb F_{p}+Nf_{p})>0.
$$
\end{lemma}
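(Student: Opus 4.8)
The plan is to check the strict differential inequality pointwise, working in the coordinate chart $\mathbb C^n_p$ attached to the vertex $p$, and to split $\square(\log\mathbb F_p+Nf_p)=\square\log\mathbb F_p+N\square f_p$. First one disposes of $\square f_p$: since $\mathsf f$ is $(G,T^n)$-invariant, $f_p$ depends only on the moment coordinates, so $i\partial\bar\partial f_p$ is supported in the fibre directions where it equals the fibre metric $\omega^M_f$, and because $\mathcal G_f$ is block-diagonal (fibre block $\oplus$ $G/K$ block) the trace is $\square f_p\equiv n$, the complex dimension of the fibre, a positive constant independent of $\mathsf f$, of $p$, and of all the data. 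Hence it is enough to bound $\square\log\mathbb F_p$ from below by a constant $c_0$ depending only on the listed quantities, on $\tau_f^{-1}$ of a fixed neighbourhood of $p$, and then to take $N>c_0/n$.

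For $\square\log\mathbb F_p$ I would follow the template of the rewritten generalized Abreu equation \eqref{eqn 2.13}, \eqref{eqn_4.3}, in the log-affine coordinates of the vertex chart: there $\log\mathbb F_p$ differs from $\log\mathbb F_\Delta=\log\mathbb D+\log\det(f_{ij})$ by an affine function of $x$, so $\square\log\mathbb F_p=c\sum_{i,j}f^{ij}(\log\mathbb F_\Delta)_{ij}$ for the fixed normalization constant $c>0$, and inserting the equation (equivalently, reading off the Ricci identities \eqref{eqn_Ric_4.7}--\eqref{eqn_Ric_4.8}) one finds that the second-order contributions in $\mathbb D$ cancel --- which is exactly why the combination $\mathbb D\det(f_{ij})$ is the correct object to work with --- leaving an identity of the shape
\[
\square\log\mathbb F_p=c\,h_G-c\,\mathbb S(\mathsf f)-c\sum_{i,j}f^{ij}(\log\mathbb D)_i(\log\mathbb F_\Delta)_j .
\]
Here $h_G=\sum_i\sigma_i(\log\mathbb D)_{\xi_i}$ is a fixed smooth function on $\bar\Delta$ by \eqref{eqn_D2.7}, and $\mathbb S(\mathsf f)$ is controlled (by the standing bound $|\mathbb S(\mathsf f)|\le K_o$, or, in the application, since $\mathbb S(\mathsf f)=A+h_G$ with $\|A\|_{C^0}$ fixed), so the only remaining term is the first-order one $\sum f^{ij}(\log\mathbb D)_i(\log\mathbb F_\Delta)_j$, which vanishes identically in the toric case $\mathbb D\equiv1$ treated in \cite{CLS-1}.

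The point of the proof is to see that this last term is bounded, and this is where the position hypothesis \eqref{equ_R_3.6} is used. Using $(\log\mathbb D)_{x_i}=\sum_k f_{ik}(\log\mathbb D)_{\xi_k}$ the term collapses to $\sum_j(\log\mathbb D)_{\xi_j}(\log\mathbb F_p)_{x_j}+\sum_j(\log\mathbb D)_{\xi_j}$, the last sum being a bounded function of $\xi$; and $\bigl|\sum f^{ij}(\log\mathbb D)_i(\log\mathbb F_p)_j\bigr|\le\|\nabla\log\mathbb D\|_f\,\|\nabla\log\mathbb F_p\|_f$, where each $|(\log\mathbb D)_{\xi_k}|\le\sum_\alpha 2M_\alpha^k/D_\alpha$, hence $\|\nabla\log\mathbb D\|_f$, is made as small as one wishes by \eqref{equ_R_3.6} and by placing $\bar\Delta$ far from the origin, while the $C^1$-behaviour of $\log\mathbb F_p$ in the holomorphic chart coordinates near the divisors --- together with the bound for $\mathbf{\Psi}=\|\nabla\log\mathbb F_p\|^2_f$ from Theorem~\ref{theorem_5.0.1} and the smooth extension of $f$ to the divisors --- keeps $\|\nabla\log\mathbb F_p\|_f$ under control; a Young-type splitting then makes the product as small as needed. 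Collecting everything gives $\square\log\mathbb F_p\ge -c_0$ with $c_0$ depending only on $n$, $\mathbb D$, $\Delta$ and its position, and hence $\square(\log\mathbb F_p+Nf_p)\ge Nn-c_0>0$ for $N$ large.

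The main obstacle is precisely this $\mathbb D$-generated first-order term: in the toric picture $\square\log\mathbb F_p$ equals $-\mathbb S$ up to a bounded quantity, whereas here it additionally couples $\nabla\log\mathbb D$ to the (a priori only boundary-regularity-controlled) gradient $\nabla\log\mathbb F_p$, so a crude bound no longer suffices. Rendering that coupling negligible is the one and only use of the normalization \eqref{equ_R_3.6} --- i.e.\ of placing $\Delta$ far enough from the origin that every Duistermaat--Heckman factor $D_\alpha$ is large --- and it is also the reason $N$ is allowed to depend on the position of $\Delta$ in $\mathbb R^n$.
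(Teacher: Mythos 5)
There is a genuine gap, and it concerns the very term the statement is designed to handle. You claim $\square f_p\equiv n$ because "$\mathcal G_f$ is block-diagonal (fibre block $\oplus$ $G/K$ block)" so that $i\partial\bar\partial f_p$ is supported in the fibre directions. This is false on a homogeneous toric bundle: the coefficients $D_\alpha(\xi)$ of the flag-manifold block depend on the moment coordinates, and as a result $\partial_{\alpha}\bar\partial_{\beta}$ of a function of $x$ alone is in general nonzero — this is exactly why \eqref{eqn_Ric_4.8} gives nonzero $Ric(S_\alpha,\bar S_\beta)$ even though $\log\mathbb F_\Delta$ depends only on $x$. If one computes $\square$ of a function $h(x)$ using the volume form $\sqrt{\det g}\propto \mathbb D\det(f_{ij})=\mathbb F_\Delta$, one gets $\square h=\sum f^{ij}h_{ij}+\sum f^{ij}(\log\mathbb D)_i h_j$. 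Consequently $\square f = n+l$ (the complex dimension of $G\times_K M$, as $f$ is the K\"ahler potential), $\square x^i=(\log\mathbb D)_{\xi_i}\neq 0$, and $\square f_p=(n+l)-\sum_\alpha D_\alpha(p)/D_\alpha(\xi)$. The positivity of this quantity is not automatic; condition \eqref{equ_R_3.6} yields $\square f_p>n/2>0$, and that is the actual role of the hypothesis. Your claim that $\square f_p$ is a universal constant is the main place where the argument breaks.

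The second problem is in your formula for $\square\log\mathbb F_p$. Because the full $\square$ already contains the first-order piece $\sum f^{ij}(\log\mathbb D)_i(\cdot)_j$, inserting the generalized Abreu equation \eqref{eqn 2.13} yields $\square\log\mathbb F_\Delta=-A$ with no leftover first-order term, and then $\square\log\mathbb F_p=-\mathbb A_p$ as in \eqref{eqn_4.3}, where $\mathbb A_p=A+\sum_k b^1_k(\log\mathbb D)_{\xi_k}$ is a fixed bounded function of $\xi$. The term $\sum f^{ij}(\log\mathbb D)_i(\log\mathbb F_\Delta)_j$ that you flag as "the main obstacle" never appears when the correct Laplacian is used; it arises only because you have implicitly replaced $\square$ by $\sum f^{ij}\partial_i\partial_j$. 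Moreover, your proposed repair — bounding $\|\nabla\log\mathbb F_p\|_f$ via Theorem~\ref{theorem_5.0.1} and $\|\nabla\log\mathbb D\|_f$ via \eqref{equ_R_3.6} — cannot work even internally: the constant in Theorem~\ref{theorem_5.0.1} depends on geometric data of the particular solution (the geodesic radii $a,b$, the divisor lower bound $\delta$, etc.), and $\|\nabla\log\mathbb D\|_f^2=\sum f_{ij}(\log\mathbb D)_{\xi_i}(\log\mathbb D)_{\xi_j}$ involves the Hessian of $f$ and cannot be made uniformly small by placing $\Delta$ far from the origin. Either way the resulting $N$ would depend on $\mathsf f$, which the lemma forbids. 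The correct argument is much shorter: $\square\log\mathbb F_p=-\mathbb A_p$ is uniformly bounded, $\square f_p>n/2$ by \eqref{equ_R_3.6}, and one takes $N>2\|\mathbb A_p\|_\infty/n$.
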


\def \fkz{\mathfrak{z}}
\def \sff{\mathsf{f}}

Here $p=\vartheta$ and $n=2.$ Set
$$W_{g_\vartheta}=\det((g_\vartheta)_{i\bar j}),\;\;\; \mathbb F_{g}=W_{g_\vartheta} \mathbb D_{g}.$$
Since  $\mathbb F_{g}$ is uniformly bounded and
\begin{equation}\label{eqn_6.2}
C^{-1}\leq \mathbb D_{f}\leq C,\;\;C^{-1}\leq \mathbb D_{g}\leq C
 \end{equation}for some constant $C>0,$ by the same argument of \cite{CLS-2}, we have the following result.

\begin{theorem}\label{theorem_7.0.1}
Let $\Delta\subset \mathbb R^2$ be a Delzant polytope and
 $(M,\omega_o)$ be the associated  compact  toric surface. Assume that $\mathbb D$
 is an edge-nonconstant function and $u\indexm=v+\psi\indexm\in \mc
C^\infty(\Delta,v)$ be a sequence of functions satisfying \eqref{eqn_3.1}.
Suppose that
\begin{enumerate}\item[(1)] $A\indexm$ $C^{3}$-converges to $A$ on $\bar\Delta$;
\item[(2)]  $\max_{\bar \Delta}|u_k|  \leq \mff C_o$,\end{enumerate}
where $\mff C_o$ is a constant independent of $k$.
 Then, for any
$k$,
\begin{equation}\label{eqnz_7.1}
 \mff C_{7}\inv\leq H_{f_k}\leq \mff C_{7},\end{equation}
where $\mff C_{7}$ is a positive constant independent of $k$.
\end{theorem}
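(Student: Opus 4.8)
The plan is to prove the two-sided bound $\mff C_{7}\inv \le H_{f_k} \le \mff C_{7}$ in two separate pieces, following the strategy of \cite{CLS-2} but tracking the extra factors $\mathbb D_f$ and $\mathbb D_g$ that relate $H$ to $\mathbb H$. First I would observe that, by \eqref{eqn_6.2}, the comparison functions $\mathbb D_f$ and $\mathbb D_g$ are uniformly bounded above and below by a constant $C$ depending only on $\Delta$, $\mathbb D$, and $\mff C_o$ (the last because $\|u_k\|_{C^0}$ is controlled, so the Legendre images stay in $\bar\Delta$ and $\nabla^{f_k}$, $\nabla^g$ take values there). Hence $H_{f_k}$ and $\mathbb H_{f_k} = (\mathbb D_g/\mathbb D_f) H_{f_k}$ differ by a uniformly bounded multiplicative factor, and it suffices to bound $\mathbb H_{f_k}$ (equivalently $\log \mathbb F_{f_k} - \log \mathbb F_g$) from above and below.

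For the upper bound on $H_{f_k}$, I would invoke Theorem \ref{theorem 6.1} directly: writing $u_k = v + \psi_k$ with $\psi_k = u_k - v$ smooth on $\bar\Delta$, the corresponding potential $\phi_k$ on $M$ satisfies $\max_M \phi_k - \min_M \phi_k \le C(\mff C_o)$ since $\|u_k\|_{C^0} \le \mff C_o$ controls $\|\psi_k\|_{C^0}$ and the oscillation of $\phi_k$ is comparable to that of $\psi_k$. The quantities $\mathcal R$, $\mathcal D$, $R_g$ appearing in Theorem \ref{theorem 6.1} depend only on $\mathbb D$, $\Delta$, the Guillemin metric $g$, and $\|A\|_{C^0}$ — all $k$-independent, using that $A_k$ converges in $C^3$ — so \eqref{eqn_3.2} gives $H_{f_k} \le \mff C_6 \exp\{(2\mathcal R+1)\cdot C(\mff C_o)\} =: \mff C_{7}$.

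For the lower bound, the idea is to use the subharmonic function $F_\vartheta = \log \mathbb F_\vartheta + N f_\vartheta$ from Lemma \ref{lemma_7.1.2} together with the maximum principle near vertices, exactly as in \cite{CLS-2}. On each vertex chart $\cplane^2_\vartheta$, $F_{\vartheta}(f_k) - F_\vartheta(g)$ is the difference of a $\square_{f_k}$-subharmonic function and a fixed smooth function; comparing on a fixed neighborhood of the vertex and using that $H_{f_k}$ is already bounded above (so $\log \mathbb F_{f_k}$ is bounded above) together with the $C^0$-control of $f_k$ (hence of $f_k - g$) on the compact chart, the maximum principle forces a lower bound for $\log \mathbb F_{f_k}$, hence for $\mathbb H_{f_k}$ and $H_{f_k}$, near the vertices. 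Away from the vertices — i.e.\ on a fixed compact subset of $M$ avoiding neighborhoods of all vertices — the edge-nonconstant hypothesis on $\mathbb D$ and the interior and edge estimates (Lemma \ref{lemma 2.5}, Lemma \ref{lemma_2.8}, Theorem \ref{theorem_5.3.3}, Theorem \ref{theorem_5.0.1}) give a uniform positive lower bound on $\mathbb F_{f_k}$, so patching the two regions yields $H_{f_k} \ge \mff C_{7}\inv$.

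The main obstacle I expect is the lower bound near vertices: one must make sure the maximum-principle argument from \cite{CLS-2} survives the presence of the first-order terms $\sum_k (\log \mathbb D)_{\xi_k} b^1_k$ that now appear in the equation \eqref{eqn_4.3} for $\log \mathbb F$, and that the constant $N$ in Lemma \ref{lemma_7.1.2} and the size of the vertex charts can be chosen uniformly in $k$. This is exactly where the normalization \eqref{equ_R_3.6} of the position of $\Delta$ is used, and where one must verify that $\|\nabla_\xi \log \mathbb D\|$ being bounded (which holds since $\mathbb D$ is a fixed polynomial, positive on $\bar\Delta$) is enough to absorb these terms into the subharmonicity estimate. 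The edge-nonconstant assumption enters to rule out degeneration of $\mathbb F_{f_k}$ along an edge in the convergence argument, just as in the toric case but with $\mathbb D$ non-constant.
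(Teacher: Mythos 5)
Your overall strategy — upper bound of $H_{f_k}$ from Theorem \ref{theorem 6.1} via the oscillation control $\max\phi_k-\min\phi_k\le C(\mff C_o)$, lower bound via the subharmonic function $F_\vartheta=\log\mathbb F_\vartheta+Nf_\vartheta$ of Lemma \ref{lemma_7.1.2} and the maximum principle, with \eqref{eqn_6.2} giving uniform comparability of $H$ and $\mathbb H$ — is the same as the paper's (the paper simply notes that $\mathbb F_g$ is bounded, $\mathbb D_f,\mathbb D_g$ are pinched by \eqref{eqn_6.2}, and then invokes the argument of \cite{CLS-2}). So the skeleton is right, and your identification of the main obstacles (the first-order terms $\sum_k(\log\mathbb D)_{\xi_k}b^1_k$ in \eqref{eqn_4.3}, the role of \eqref{equ_R_3.6}) is correct.

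However, the directions of the key inequalities in your lower-bound argument are reversed, and this is a genuine gap rather than a typo because it propagates through the whole paragraph. Subharmonicity $\square F_\vartheta>0$ together with the maximum principle gives that $F_\vartheta$ attains its maximum on the boundary, i.e.\ an \emph{upper} bound for $F_\vartheta$ — and hence, using the $C^0$-control of $f_\vartheta$, an \emph{upper} bound for $\log\mathbb F_{f_k}$ on the chart. It is this upper bound on $\mathbb F_{f_k}=\mathbb D_f\det(f_{i\bar j})$, equivalently on $\det(f_{i\bar j})$, that produces the desired \emph{lower} bound on $H_{f_k}=\det(g_{i\bar j})/\det(f_{i\bar j})$. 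Your text asserts instead that the maximum principle gives a lower bound for $\log\mathbb F_{f_k}$ which then gives a lower bound on $H_{f_k}$; those two statements each have the wrong sign and happen to cancel, but the underlying claim is false as written. The same confusion appears in the "away from vertices" step: Lemma \ref{lemma 2.5} reads $\det(u_{ij})\ge b/\xi_1$, which translates to an \emph{upper} bound $\det(f_{ij})\le\xi_1/b$ (and hence an upper bound on $W_\ell$ and $\mathbb F_\ell$ near the edge, after the $e^{-x^1_\ell}$ factor), not a "uniform positive lower bound on $\mathbb F_{f_k}$" as you claim. For the lower bound of $H$ one needs upper control of $\mathbb F_{f_k}$ everywhere; the lower bound on $\mathbb F_{f_k}$ is simply not what the lemma gives nor what is needed. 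You should rewrite the lower-bound step with these directions corrected: the edge estimate Lemma \ref{lemma 2.5} and interior estimates bound $\mathbb F_{f_k}$ from above on the boundary of each vertex chart, subharmonicity of $F_\vartheta$ propagates that upper bound into the chart, and this yields $H_{f_k}\ge\mff C_7^{-1}$.
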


\section{A Convergence Theorem}\label{sect_Convergence}

In this section, we extend Theorem 6.11  \cite{CLS-1} to the generalized Abreu equations.

\begin{lemma}\label{lemma_4.1}
 Let $z_o\in\CHART_\bullet$, where $\bullet$ can be $\Delta, \ell_i$ or $\vartheta_i$, and
 $B_a(z_o)$ be a geodesic ball in $\CHART_\bullet$.
 Suppose that
$f(z_o)=0$, $\nabla f (z_o)=0$ and
 $$ \mathcal K(f) \leq \mff N_4,\;\;\;\;    \det(f_{i\bar j})\leq  \mff N_4,\;\;\; |z|\leq \mff N_4\quad\text{in }B_a(z_o),$$
for some positive
 constant $\mff N_4$.
 Then there is a constant $a_1>0$, depending on $a$ and $C_1$,
such that  $D_{{2a_1}}(z_o)\subset B_{{a}/{2}}(z_o),$ and for any $k\geq 0,$
$$
\|f\|_{C^{k+3,\alpha}(D_{a_1}(z_o ))}\leq \mff C_{8}(a,\mff N_{4}, \|A\|_{C^k }, \|\mathbb D\|_{C^{k+1}}),
$$
where $\mff C_{8}>0 $ is a constant depending only on $a$, $\|A\|_{C^k },$ $\|\mathbb D\|_{C^{k+1}}$ and $\mff N_{4}$.
\end{lemma}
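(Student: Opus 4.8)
The plan is to fix the complex coordinate chart $\CHART_\bullet$ — with complex coordinates $z=(z_1,z_2)$ adapted to the interior of $\Delta$, to an edge $\ell_i$, or to a vertex $\vartheta_i$ exactly as in \S\ref{sec-2.4} and \S\ref{sect_EstimateK} — to establish a uniform two-sided bound for the K\"ahler metric $(f_{i\bar j})$ on a fixed Euclidean ball about $z_o$, to check that this Euclidean ball lies inside the geodesic ball $B_{a/2}(z_o)$, and then to run a Schauder-type bootstrap on the equivalent forms \eqref{eqn 2.13}--\eqref{eqn_4.3} of the generalized Abreu equation. Since only $\mc K(f)$, the curvature of $\omega_f^M$, enters the hypotheses, the scheme is the one of \cite{CLS-1}; the only genuinely new feature is the presence of the first-order terms carrying $\nabla\log\mathbb D$, which must be tracked throughout.

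\emph{Step 1 (uniform ellipticity of the metric).} From $\mc K(f)\le\mff N_4$ we read off $\|Ric^M\|_f\le\mff N_4$, i.e.\ the complex Hessian of $-\log\det(f_{i\bar j})$ is bounded in $\mathcal G_f$. Combined with the one-sided bound $\det(f_{i\bar j})\le\mff N_4$, with $|z|\le\mff N_4$ — which localizes the chart to a fixed compact region and, since $D_\alpha=2\sum_j M^j_\alpha\,\partial f/\partial x^j$ and $\mathbb D$ is a fixed positive smooth function, gives $\mff C\inv\le\mathbb D_f\le\mff C$ as in \eqref{eqn_6.2} — and with the equation \eqref{eqn_4.3} for $\log\mathbb F$, whose inhomogeneous term $\mathbb A$ is controlled by $\|A\|_{C^0}$ and $\mathcal D=\max_{\bar\Delta}|\nabla_\xi\log\mathbb D|$, one runs the maximum principle on the affine-invariant test quantities of \cite{CLS-1} (see also \cite{LLS-1} and \cite{LSZ}), absorbing the extra term $-\sum f^{ij}(\log\mathbb D)_i(\log\mathbb F)_j$ of \eqref{eqn 2.13} by Cauchy--Schwarz against the $\mathcal D$-bound. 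This yields on $B_{3a/4}(z_o)$ a lower bound $\det(f_{i\bar j})\ge\mff C\inv$ together with an upper bound for $\mathrm{tr}(f_{i\bar j})$, hence $\mff C\inv I\le(f_{i\bar j})\le\mff C I$ there, and in particular $f\in C^{1,1}$ on $B_{3a/4}(z_o)$.

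\emph{Steps 2--3 (geodesic versus Euclidean balls, and bootstrap).} Since $f(z_o)=0$ and $\nabla f(z_o)=0$, the point $z_o$ is the minimum of $f$; with $(f_{i\bar j})$ two-sided bounded on $B_{3a/4}(z_o)$, a standard connectedness argument shows any Euclidean ball $D_r(z_o)$ with $\mff C^{1/2}r<3a/4$ lies in $B_{3a/4}(z_o)$, and $d_f(z_o,\cdot)\le\mff C^{1/2}|\cdot-z_o|$ on it. Choosing $a_1=a_1(a,\mff C)>0$ with $2\mff C^{1/2}a_1\le a/2$ gives $D_{2a_1}(z_o)\subset B_{a/2}(z_o)$ with the metric still uniformly elliptic there. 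On $D_{2a_1}(z_o)$ view the equation as the coupled pair $\det(f_{i\bar j})=\mathbb F/\mathbb D_f$ and $-\square\log\mathbb F=\mathbb A$: Krylov--Safonov applied to the latter (uniformly elliptic, bounded measurable coefficients, bounded right-hand side) gives $\mathbb F\in C^\alpha$, whence $\mathbb F/\mathbb D_f\in C^\alpha$ and Evans--Krylov for the complex Monge--Amp\`ere equation (with $f\in C^{1,1}$ from Step 1) gives $f\in C^{2,\alpha}(D_{a_1}(z_o))$. From here the linear equation for $\log\mathbb F$ has $C^\alpha$ coefficients and, since $A\in C^k$ and $\mathbb D\in C^{k+1}$, an arbitrarily smooth right-hand side, so Schauder applied alternately to the two equations gains one derivative per round and yields
$$\|f\|_{C^{k+3,\alpha}(D_{a_1}(z_o))}\le\mff C_8(a,\mff N_4,\|A\|_{C^k},\|\mathbb D\|_{C^{k+1}}).$$

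The main obstacle is Step 1: it is the only place where the full strength of the bound on $\mc K$ is exploited, and the only place where the generalized Abreu equation differs substantively from the Abreu equation, because of the first-order term carrying $\nabla\log\mathbb D$. The point to verify carefully is that this term is of lower order and that $\mathbb D$ is a fixed smooth function with $D_\alpha$ linear in $\nabla f$, so the Chen--Li--Sheng maximum-principle argument for the affine-invariant quantities goes through with the extra term absorbed — but the bookkeeping is heavier than in \cite{CLS-1}. Once uniform ellipticity of $(f_{i\bar j})$ is established, Steps 2 and 3 are routine elliptic theory.
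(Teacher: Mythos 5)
Your proposal follows the same route as the paper: two-sided eigenvalue bounds on $(f_{i\bar j})$ as in \cite{CLS-1} (with the extra $\nabla\log\mathbb D$ term absorbed), then Krylov--Safonov on the rewritten equation \eqref{complex_scalar_curvature_equations} to get $\log(W\mathbb D)\in C^\alpha$, hence $\det(f_{i\bar j})\in C^\alpha$, then a Schauder bootstrap. The paper's proof is terser (it defers most of Step 1 and the final bootstrap to ``the same argument as in \cite{CLS-1}''), while you spell out the passage to Euclidean balls and name Evans--Krylov for the $C^{2,\alpha}$ step, but the logical skeleton is identical.
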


\begin{proof}   We only prove the case $B_a(z_o)\subset \CHART_\ell$;
the proof for other cases is similar. Assume that $Z_{\ell}=\{z_{1}=0\}.$  As in \cite{CLS-1}, we can prove
$$  C_{1}\inv\leq \lambda_1\leq  \lambda_2\leq   C_{1}\quad\text{for any }q\in B_{a/2}(p_0),$$
and
\begin{equation} \label{eqn_2.10}
\|f\|_{C^{2}}\leq  C_{1},
\end{equation}
where $\lambda_1, \lambda_2$ are eigenvalues of the matrix $(f_{i\b
 j})$ and $  C_{1}$
 is a positive constant depending on $n,a$ and $\mff N_4.$
Note that $\mathbb F_{\ell}=W\mathbb D,$ where $W=\det(f_{i\bar j}).$
 In coordinates $\mathsf U_{\ell}$ we can rewrite \eqref{eqn_4.3} as
\begin{equation}\label{complex_scalar_curvature_equations}
-\sum f^{i \bar j}\frac{\p^2 (\log  ( W\mathbb D))}{\p z_{i}\p \bar z_{j}}
+\frac{1}{2}\sum \frac{\p \log \mathbb D}{\p \xi_{i}}Re\left[ z_{i}\frac{\p \log  ( W\mathbb D)}{\p z_{i}}
 \right]=\mathbb A_{\ell}.\end{equation}
We point out that first-order terms appear in \eqref{complex_scalar_curvature_equations}
due to the presence of $\mathbb D$ for homogeneous toric bundles.
By applying Krylov-Safonov's estimate to the equation \eqref{complex_scalar_curvature_equations},
 we have $\log (W\mathbb D)\in C^{\alpha}(U)$, for some $\alpha\in (0,1)$.  By $$
\xi_1= {z_1} \frac{\partial f}{\partial z_1},\;\;\;\;
\xi_2=2\frac{\partial f}{\partial w_2},
 \;\;\;\;\frac{\p \log\mathbb D}{\p z_{i}}=\frac{\p \xi_{j}}{\p z_{i}}\frac{\p  \log\mathbb D}{\p x_{j}}$$
 and \eqref{eqn_2.10},
we can check  that
$
\log  \mathbb D\in C^{1}(U).
$
It follows that
$$
  \det(f_{i\bar j})\in C^{\alpha}(U).
$$
Then by the same argument as in \cite{CLS-1}, we obtain the desired estimate.
\end{proof}


\section{The Proof of Main Theorem}\label{sec_8}

In this section, we prove the existence of
the solution to \eqref{eqn 2.7} by the standard continuity method.
The closedness is provided by Theorem \ref{theorem_2.2.2} below.
For technical reasons,
we are only  able to prove this under the condition that $\mathbb A_{\ell}$ is an edge-nonvanishing.
Due to this, we need to require that the Duistermaat-Heckman polynomial
$\mathbb D$ is an  edge-nonconstant
function.

\subsection{The Method of Continuity}\label{sec_MethodContinuity} In this subsection,
we construct a 1-parameter family of equations to solve \eqref{eqn 2.7} by the method of continuity.
We first demonstrate that edge-nonconstant $\mathbb D$ implies
edge-nonvanishing $\mathbb A_{\ell}$.

\begin{remark}\label{remark_9.1}
Let $\ell\subset \p \Delta$ be an edge and $(\xi^{\ell}_{1},\xi^{\ell}_{2})$
be the coordinates such that $$\ell\subset \{\xi^{\ell}|\xi^{\ell}_{1}= 0\},\;\;\;\;
\Delta\subset \{\xi^{\ell}|\xi^{\ell}_{1}\geq 0\}.$$
Then,
$$
\mathbb A_{\ell}=A+\sum_{k=1}^{2} \frac{\p \log \mathbb D}{\p \xi_{k}^{\ell}}\frac{\p x^{1}_{\ell}}{\p x^{k}_{\ell}}.
$$
We consider the coordinate transformation
$$
\tilde \xi_{1}=\xi^{\ell}_{1},\; \tilde \xi_{2}=\xi^{\ell}_{1}+a\xi^{\ell}_{2},
$$ where $a$ is a constant to be determined.
By the Legendre transforation we have
$$\tilde x^{1}=x^{1}_{\ell}-ax^{2}_{\ell},\; \tilde x^{2}=x^{2}_{\ell}.$$
Set
$$\tilde {\mathbb F}=\det\left(\frac{\p^2 f}{\p\tilde x^{i} \p\tilde x^{j}}\right)e^{- \tilde x^{1}},\quad
\tilde {\mathbb A}=-\square \log \tilde {\mathbb F}.$$ Then,
\begin{equation}\label{eqn_9.1}
\tilde {\mathbb A}=A+\sum_{k=1}^{2} \frac{\p \log \mathbb D}{\p \xi_{k}^{\ell}}\frac{\p \tilde x^{1}}{\p x^{k}_{\ell}}
=\mathbb A_{\ell}-a\frac{\p \log \mathbb D}{\p \xi_{2}^{\ell}}.
\end{equation}
Since $\mathbb D$ is an edge-nonconstant function, we have  $\frac{\log \mathbb D}{\p \xi^{2}_{\ell}}\neq 0.$
By choosing $a$ appropriately, we conclude that $\tilde {\mathbb A}|_{\ell}\neq 0$.
Set $$A_{0}=\mathcal S(v)+h_{G},\;\;\; \mathbb A_{\ell}^{0}=A_{0} -a\frac{\p \log \mathbb D}{\p \xi_{2}^{\ell}},\;\;\;A_{1}=A ,\;\;\;
\mathbb A_{\ell}^{1}=A_{1} -a\frac{\p \log \mathbb D}{\p \xi_{2}^{\ell}}.$$
 Hence by choosing $a$ in \eqref{eqn_9.1} appropriately,
 we can fix $q_{\ell} \in \ell$ and  coordinates
 $(\xi^{\ell}_{1},\xi^{\ell}_{2})$ for any edge of $\p \Delta,$ such that
\begin{equation}\label{eqn_9.2}
\xi^{\ell}(q_{\ell})=0,\;\;\;\; \;\;\ell\subset \{\xi^{\ell}|\xi^{\ell}_{1}= 0\},\;\;\;\;\Delta\subset \{\xi^{\ell}|\xi^{\ell}_{1}\geq 0\},
\end{equation}
and
\begin{equation}
|\mathbb A_{\ell} (q_{\ell})|>2\delta_{o}>0,\;\;\;|\mathbb A^{o}_{\ell} (q_{\ell})|>2\delta_{o}>0,\;\;\;
{\mathbb A}_{\ell}^{o}\mathbb A_{\ell}>0,
\end{equation}
for some constant $\delta_{o}>0.$ Hence, we can assume that the whole path $A_t$,
connecting $A_0$ and $A_1=A$, such that
${\mathbb A}_{\ell}^{t}=(1-t){\mathbb A}_{\ell}^{0}+t {\mathbb A}_{\ell}^{1}$ satisfies  \eqref{eqn_2.2} below.
On each $\ell$, let $\epsilon$ be  a positive constant such that
\begin{equation}\label{eqn_2.1}
D_{2\epsilon}(q_{\ell})\cap \partial \Delta\subset \ell,
\end{equation}
and, for any $t\in [0,1],$
\begin{equation}\label{eqn_2.2}
|\mathbb A^{t}_{\ell}|>\delta_o \quad\text{on } \mc D^\ell:= D_\epsilon(q_{\ell})\cap  \bar\Delta,
\end{equation}
for a constant $\delta_o>0$.
\end{remark}

Let $K$ be a scalar function on $\bar\Delta$ of $G\times_{T}M$ and suppose that there exists a constant
$\lambda>0$ such that $\Delta$ is $(\mathbb{D},A,\lambda)$ stable, where $A=K-h_{G}$.

Let $I=[0,1]$ be the unit interval. At $t=0$ we start with a known
metric $\mathcal G_{v}$.
Let $K_0$ be its scalar curvature
on $\Delta$ and $A_{0}=K_{0}-h_{G}$. Then, $\Delta$ is $(\mathbb{D}, A_0, \lambda_0)$ stable for some constant
$\lambda_0>0$ (cf. \cite{LLS}). At $t=1$, set
$(A_1,\lambda_1)=(A,\lambda)$. On $\Delta,$ set
$$
A_t=tA_1+(1-t)A_0, \;\;\; \lambda_t=t\lambda_1+(1-t)\lambda_0.
$$
Obviously, $
A_t=tA_1+(1-t)A_0$.
It is easy to verify that $\Delta$ is $(\mathbb{D},A_t,\lambda_t)$ stable.

\begin{remark}\label{remarka_2.2.1}
 For any $t\in [0,1],$ $\Delta$ is $(\mathbb{D},A_t,\lambda')$-stable,
 where $\lambda'=\min\{\lambda_{0} ,\lambda_1\}.$
\end{remark}

Let $u_t\in \mathbf S_{p_{o}}$ be a solution of the equation
$$-\frac{1}{\mathbb D}\frac{\p ^2 \mathbb D u_{t}^{ij}}{\p \xi_{i} \p \xi_{j}}=A_t.$$
Applying $C_{0}$ estimates, we have
$$|\max\limits_{\bar \Delta} u_t-\min\limits_{\bar \Delta} u_t|\leq \mathcal C_1
\quad\text{for any } t\in [0,1],$$
where $\mathcal C_1$ is a constant depending only on  $\lambda'$, $\Delta$, $\|\mathbb D\|_{C^{0}}$ and
$\|A_0\|_{C^{0}}+\|A_1\|_{C^{0}}.$

Set
$$
\Lambda=\left\{t|-\frac{1}{\mathbb D}\frac{\p^2 \mathbb D u^{ij}}{\p \xi^{i} \p \xi^{j}}= A_t
\text{ has a solution in }  \mathbf S\right\}.
$$
We will prove that $\Lambda$ is open and closed. The openness is
standard by following an argument by  Lebrun and Simanca \cite{L-Ss}.
The closedness is provided by Theorem \ref{theorem_2.2.2} below.
This then implies Theorem
\ref{theorem_1.1}.

\subsection{Regularity near $\partial\Delta$}\label{sect_Regularity}
In this subsection, we discuss the regularity near $\partial\Delta$.
The following result can be regarded as the
main result in this paper.

\def \fkz{\mathfrak{z}}

\begin{theorem}\label{theorem_2.2.2}
Let $\Delta\subset \mathbb R^2$ be a Delzant polytope and
 $(M,\omega_o)$ be the associated  compact  toric surface.
 Suppose that $\mathbb D$ is an edge-nonconstant   function on $\bar\Delta$. Let $u\indexm=v+\psi\indexm\in \mc
C^\infty(\Delta,v)$ be a sequence of functions satisfying \eqref{eqn_3.1}. Suppose that
\begin{enumerate}\item[(1)] $A\indexm$ $C^{3}$-converges to $A$  on $\bar\Delta$;
\item[(2)]  $
\max_{\bar \Delta}\left|u_k\right| \leq \mff C_o$,\end{enumerate}
where $\mff C_o>0$ is a constant independent of $k$.
Then,  there is a subsequence of  $\psi\indexm$ which  $C^{6,\alpha}$-converges to
a function $\psi\in  C^{6,\alpha}(\bar \Delta)$ with $\mathbb
S(v+\psi)=K$ where $K=A+h_{G}$.
\end{theorem}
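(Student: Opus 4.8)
The plan is to establish the compactness statement by a covering argument on $\bar\Delta$, treating the interior, the edge interiors, and the vertices separately, and then assembling the local estimates into a global $C^{6,\alpha}$ bound. First I would invoke Theorem~\ref{theorem_3.2} together with the interior regularity from \cite{LLS} (recorded in Section~\ref{sec-2.3}) so that, after passing to a subsequence, $u_k$ converges locally in $C^6$ on $\Delta$ to a strictly convex limit $u_\infty$ which extends continuously to $\bar\Delta$; by Lemma~\ref{proposition_5.1.1} the restriction of $u_\infty$ to each edge $\ell^\circ$ agrees with the limit $h$ of $h_k=u_k|_\ell$, and by Lemma~\ref{lemma_5.2.1} the second derivative $\partial^2_{22}h$ is bounded below on each compact subinterval $\ell_{c,d}$, so $u_\infty\in\mathbf S_{p_o}$ once regularity up to the boundary is known. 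The bulk of the work is to upgrade this to a uniform $C^{6,\alpha}(\bar\Delta)$ bound on $\psi_k = u_k - v$.

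The key steps near an edge interior $\ell^\circ$ are: (i) fix $q_\ell$ and adapted coordinates as in Remark~\ref{remark_9.1} so that $|\mathbb A_\ell^t|>\delta_o$ on $\mathcal D^\ell$, which is exactly where the edge-nonconstant hypothesis on $\mathbb D$ is used — it guarantees the edge-nonvanishing of $\mathbb A_\ell$, which in turn is needed to apply Theorem~\ref{theorem_5.0.1}; (ii) use the Riemannian-distance estimate Theorem~\ref{theorem_5.3.3} to control geodesic balls $B_\epsilon^{(k)}(p)$ by Euclidean half-disks; (iii) apply the $\mathcal K$-near-divisor estimate Theorem~\ref{theorem_5.0.1} to get a bound on $\mathbb F^{1/2}(\mathcal K+\mathbb K + \|\nabla\log|\mathbb A|\|^2 + \mathbf\Psi)$ in geodesic balls over $D_b(p)\cap\bar\Delta$; (iv) combine with the upper and lower bounds on $H$ from Theorem~\ref{theorem_7.0.1} (which again uses edge-nonconstancy of $\mathbb D$) to obtain two-sided bounds $C^{-1}\le f_{i\bar j}\le C$ near the divisor $Z_\ell$; and (v) feed this into the convergence lemma Lemma~\ref{lemma_4.1} to extract a $C^{3,\alpha}$-convergent subsequence of $f_k$ near $Z_\ell$ — and then bootstrap in $\|A\|_{C^k}$ and $\|\mathbb D\|_{C^{k+1}}$ to $C^{6,\alpha}$. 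Near a vertex $\vartheta$ one argues similarly, but the crucial input is the subharmonic function $F_\vartheta = \log\mathbb F_\vartheta + Nf_\vartheta$ of Lemma~\ref{lemma_7.1.2}: a maximum-principle argument on $\square F_\vartheta>0$ in the preimage $\tau_f^{-1}$ of a vertex neighborhood, together with the uniform control of sections from Theorem~\ref{theorem_7.4.1}, gives the required pointwise bounds on $\mathbb F$ near $\vartheta$, after which Lemma~\ref{lemma_4.1} applies again.

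Having obtained, for each point of $\bar\Delta$, a neighborhood on which (a subsequence of) the potentials converges in $C^{6,\alpha}$, I would cover the compact set $\bar\Delta$ by finitely many such neighborhoods, pass to a diagonal subsequence, and conclude that $\psi_k$ converges in $C^{6,\alpha}(\bar\Delta)$ to some $\psi$; since the equation \eqref{eqn_3.1} and the convergence $A_k\to A$ in $C^3$ pass to the limit, $\mathbb S(v+\psi)=A+h_G=K$, and the limit lies in $\mathbf S$ because strict convexity and the Guillemin behavior of $v$ are preserved under $C^{6,\alpha}$-convergence with the determinant estimates of Lemma~\ref{lemma 2.5} and Lemma~\ref{lemma_2.8}. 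I expect the main obstacle to be Step~(iii)–(iv) near the edges: controlling the ratio $H$ and the quantity $\mathbb F^{1/2}\mathcal R$ simultaneously in the presence of the genuinely first-order terms $\sum (\log\mathbb D)_i (\log\mathbb F_\Delta)_j$ in \eqref{eqn 2.13} and \eqref{eqn_4.3}, since these terms survive the affine blow-up and must be shown to vanish in the limit (as in \eqref{eqn_4.26}–\eqref{eqn_4.27}) in order to return to the classical Abreu picture where $\check{\mathcal K}\equiv 0$; making this blow-up analysis uniform in $k$ while tracking $\nabla\log\mathbb D$ is the technically delicate heart of the argument.
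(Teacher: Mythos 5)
Your edge step matches the paper's Step~1 essentially line for line: Remark~\ref{remark_9.1} for edge-nonvanishing $\mathbb A_\ell$, Theorem~\ref{theorem_5.3.3} for the geodesic-ball/Euclidean-half-disk comparison, Lemma~\ref{lemma_5.2.1} for the boundary convexity, Theorem~\ref{theorem_5.0.1} for the $\mathcal K+\mathbb K$ bound, Theorem~\ref{theorem_7.0.1} for the two-sided bound on $H$ (hence on $W_f$ and $\mathbb F_f$), and Lemma~\ref{lemma_4.1} to close. That part is fine.

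The vertex step, however, has a genuine gap. You propose to use Lemma~\ref{lemma_7.1.2} ($\square(\log\mathbb F_\vartheta + Nf_\vartheta)>0$) together with Theorem~\ref{theorem_7.4.1}, then apply Lemma~\ref{lemma_4.1}. But Lemma~\ref{lemma_7.1.2} only feeds the lower bound on $H$ in Theorem~\ref{theorem_7.0.1}, i.e.\ it controls the determinant $\mathbb F_{f_\vartheta}$ and $W$; it says nothing about the trace $T=n-\square\phi$ or about $\mathbf\Psi$, and hence does not give two-sided bounds on the eigenvalues of $(f_{i\bar j})$ near a vertex. Lemma~\ref{lemma_4.1} in turn requires a $\mathcal K$-bound, and Theorem~\ref{theorem_5.0.1} does not apply near vertices since it needs the geodesic ball to project into a half-disk sitting inside the open edge. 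The paper's actual vertex argument uses a different and stronger tool, Lemma~\ref{lemma_8.2.1}: the refined differential inequality $\square(P+Q+\mff C_9 f_\vartheta)\geq \mff C_{10}(P+Q)^2>0$, where $Q$ involves the trace $T$ and $P$ involves $\mathbf\Psi$. The maximum principle is run on the $G$-invariant set $G(\Omega_\vartheta)\subset G\times_K M$ (not just the fiber), using Step~1 to control $T$ on $G(\partial\Omega_\vartheta)$; this is precisely where the group action matters and what the introduction flags as the new difficulty compared with toric surfaces. Once $T$, $P$, $Q$ are bounded, the paper combines with Theorem~\ref{theorem_7.0.1} to get the eigenvalue and $\|\log W_f\|_{C^1}$ bounds and then applies Theorem~6.7 of \cite{CLS-1} (not Lemma~\ref{lemma_4.1}) to reach $C^{6,\alpha}$. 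Theorem~\ref{theorem_7.4.1} is a blow-up convergence statement used inside the proof of Theorem~\ref{theorem_5.0.1} and does not supply pointwise bounds near vertices. So your proposal needs to replace the Lemma~\ref{lemma_7.1.2}/Theorem~\ref{theorem_7.4.1} route by the $T$-bound from Lemma~\ref{lemma_8.2.1} together with the $G$-invariant maximum-principle argument.
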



\begin{proof}
We estimate $\psi\indexm$ in two steps, near edges in the first step and near vertices
in the second.

{\em Step 1: Estimates on edges.}
We will prove the  regularity near each edge $\ell$ where $\mathbb A_{\ell}$ does not vanish.
Let $\ell$ be any edge and $(\xi^{\ell}_{1},\xi^{\ell}_{2})$ be the coordinates such that
$$
\xi^{\ell}_{1}(\ell)=0,\;\;\;\Delta\subset \{\xi^{\ell}|\xi^{\ell}_{1}\geq 0\}.
$$
By Remark \ref{remark_9.1}, we can assume that  $q_{\ell}\in \ell$ such that $|\mathbb A_{\ell}(q_{\ell})| >0$.
Recall that $f_{k}=g +\phi_k,$ where $f_k,g$ are Legendre transform of $u_k,v$ respectively
and $\phi_k\in C^\infty_{\mathbb T^2}(M).$
Set
$$\Omega=\{(z_1,z_2)|\log|z_1|^2\leq \half ,|\log |z_2|^2|\leq 1\}.$$
By the convergence of $A_k$ to $A$,
we can assume
that
$$\aligned &|{\mathbb A}_k|>\delta >0\quad\text{in }\Omega,\\
&D_{2a}(\xi^{(\ell)})\subset \tau_{f_k} (\Omega),\endaligned$$
and
$$|z_1(\fkz^{(\ell)}_k) |=0,\quad |z_2(\fkz^{(\ell)}_k)|=1,$$
where $\delta,a$ are positive  constants independent of $k,$ and
  $\fkz^{(\ell)}_k\in Z_\ell$  whose image of the moment map is $\xi^{(\ell)}$.

  We omit the index $k$ if
there is  no danger of confusion. By Theorem \ref{theorem_5.3.3}, we conclude that
there is a constant $\epsilon>0 $ that is independent of
$k$   such that
$$B _\epsilon(\xi^{(\ell)})\cap \Delta\subset D_a(\xi^{(\ell)})\cap \Delta.$$ Then
$B _\epsilon(\fkz^{(\ell)})$ is uniformly bounded. Hence, on this domain, we
assume that all data of ${g_\ell}$
are bounded.

Note that  $\mathbb F_{g_\ell},W_{g_\ell}, \mathbb D_{g}$  and $\mathbb D_{f}$
are bounded on $B _\epsilon(\fkz^{(\ell)})$. By  Theorem \ref{theorem_7.0.1}, we have
\begin{equation}\label{eqn_8.6}
C_1\inv\leq W_{f} \leq C_1,\;\;\;\;C_1\inv\leq\mathbb F_{f} \leq C_1
\quad\text{in }B _\epsilon(\fkz^{(\ell)}).
\end{equation}
It follows from Lemma \ref{lemma_5.2.1} that
$$ \partial ^{2}_{22} h|_{D_a(\xi^{(\ell)})\cap \ell}\geq \mathsf C_{3}.$$
Then by Theorem \ref{theorem_5.0.1} and \eqref{eqn_8.6},
we conclude that there is a constant $C_2>0$ independent of $k$ such that
\begin{equation}\label{eqn_8.7}
\mc K+\mathbb K \leq C_2\quad\text{in }B _\epsilon(\fkz^{(\ell)}).
\end{equation}
By the convexity of $u$ and $\|u-v\|_{L^{\infty}(\Delta)}\leq \mathcal C_{1},$ we have
$$ |\partial _{2}u|\leq  \mff C_{o} a^{-1},\;\;\; \;\;\; \;\;\; \partial _{1} u\leq  \mff C_{o}a^{-1}.$$
That is
$\max_{B _\epsilon(\fkz^{(\ell)})}|z|\leq C_3.$
 Hence, by Lemma \ref{lemma_4.1},
 we have the regularity of $\mathsf{f} $
on $B_\epsilon(\fkz^{(\ell)})$.

{\em Step 2: Estimates on vertices.} We will prove
the regularity in neighborhoods of each vertex with the help of subharmonic functions in
Lemma \ref{lemma_8.2.1} below.

Let  $\mathbf {\phi}=\mathbf f-\mathbf g\in C^\infty(G\times_{K}M),$
where $\mathbf f$(resp. $\mathbf g$) are potential function of the metric $\mathcal G_{u}$(resp. $\mathcal G_{v}$).
Let $(t^{1},\cdots, t^{n+l})$ be the local holomorphic coordinates of $G\times_{K}M.$
Let $\mathbf f_{A\bar B}=\frac{\p^2 \mathbf f}{\p t^{A}\p \bar t^{B}}$ and $\mathbf f^{A\bar B}$
denotes the inverse of the matrix  $\mathbf f_{A\bar B}$.
Set
$$
T=\sum \mathbf f^{A\bar B}\mathbf g_{A\bar B}=n-\square \phi,\;\;\;
P=\exp(\kappa \mathbb F^\alpha)\sqrt{\mathbb F}\mathbf{\Psi},
\;\;\;Q=e^{- N_1(\phi-inf {\phi}+1)}\sqrt{\mathbb F}T.
$$
In the basis of $\{S_{j},\bar S_{j},S_{\alpha},\bar S_{\alpha}\}_{j\leq n,\alpha\in R_{M^{+}}},$
one can check that (see \cite{LSZ})
$$
\mathcal G_{f}=2\sum_{A,B=1}^{2+l} f_{,A\bar B}(\omega^{A}\otimes \bar\omega^{B})
$$
where $\{\omega^{A}\}$ are dual $(1,0)$ form of $\{S_{j}\} $, and $f$ is the function defined in \S\ref{sec-2.2}.
Then, $f$ is potential function of $G\times_{K}M$.
Hence,
$$
\phi=f-g.
$$ Note that $f_{\vartheta}$ and $f_{\ell}$ are not a potential function.
A direct calculation gives us
\begin{equation}\label{eqn_8.1}
T=\sum_{i=1}^{2} f^{ij}g_{ij}+\sum_{\alpha \in R_{M^{+}}}\frac{\sum\limits_{i,j=1}^{2} f^{ij}
\frac{\p D_{\alpha}}{\p x^{i}} \frac{\p g}{\p x^{j}}}{\sum\limits_{k,l=1}^{2} f^{kl}\frac{\p D_{\alpha}}{\p x^{k}}
\frac{\p f}{\p x^{l}}}\geq \sum_{i=1}^{2} f^{ij}g_{ij},
\end{equation}
where we used
$\frac{\p f}{\p x^{j}}>0,\;\frac{\p g}{\p x^{j}}>0,\;\sum\limits_{i=1}^{2} f^{ij}\frac{\p D_{\alpha}}{\p x^{i}}
=\frac{\p D_{\alpha}}{\p \xi_{j}}>0$ for any $ {j\leq 2,\alpha\in R_{M^{+}}}.$

Since $\sum\limits_{k,l=1}^{2} f^{kl}\frac{\p \log D_{\alpha}}{\p x^{k}} \frac{\p f}{\p x^{l}}=1$, we can check that
\begin{equation}\label{eqn_8.2}
T\leq \sum_{i,j=1}^{2} f^{ij}g_{ij}+\sum_{\alpha \in R_{M^{+}}} \sum_{i,j=1}^{2} f^{ij}
\frac{\p \log D_{\alpha}}{\p x^{i}} \frac{\p g}{\p x^{j}}\leq \sum_{i,j=1}^{2} f^{ij}g_{ij}+C,
\end{equation}
where $C$ is a constant depending only on $diam(D)$ and  $\max_{\Delta} |\nabla\log \mathbb D|.$
\v
Li, Sheng, and Zhao \cite{LSZ} prove the following lemma.
\begin{lemma}\label{lemma_8.2.1}
 Let
  $\vartheta$ be a vertex, $\Omega\subset \CHART_\vartheta$ and
  $ -\square  \log \mathbb F_{\vartheta} =\mathbb A_{\vartheta}$. Suppose that
\begin{equation}\label{eqn_8.3}
  \|\mathbb A_{\vartheta}\|_{C^{1}(\tau_f(\bar \Omega))} \leq  \mff N_5,\quad \mathbb F_{\vartheta}\leq \mff N_5,\quad
\sup|\phi|+\sup_{\Omega}|z|\leq \mff N_5\quad,\end{equation}
for some constant $ \mff N_5>0$ independent of $k$. Assume that \eqref{equ_R_3.6} holds with $n=2.$ Take
\begin{equation}\label{eqn_8.4} N_1=100,\alpha=\frac{1}{3},\kappa=[4\mff N_5^\frac{1}{3}]\inv.
\end{equation}
Then,
\begin{equation}\label{eqn_8.5}
\df (P+Q+ \mff C_{9}f_{\vartheta} ) \geq \mff C_{10}(P+Q)^2>0,
\end{equation} for some positive constants $\mff C_{9}$ and $\mff C_{10}$
depending only on $\mff N_2$, the structure constants of $\mathfrak{g}$, $\mathbb{D}$, $\Delta$ and the position of $\Delta$ in $\mathbb{R}^2$.
\end{lemma}

By Step 1, there is a bounded open set $\Omega_\vartheta\subset \CHART_\vartheta$, independent
of $k$,
such that $\vartheta \in \tau(\Omega_\vartheta)$ and the regularity of
$f_\vartheta$ holds in a neighborhood of $\partial \Omega_\vartheta$.
By the $G$-action, we get a $G$-invariant neighborhood in $G\times_KM$,
denotes by $G(\Omega_\vartheta).$ 
By \eqref{eqn_8.2} we have $T$ is uniform bounded in a neighborhood of $G(\p\Omega_\vartheta) $,
the boundary of $G(\Omega_\vartheta)$.
It follows from Lemma \ref{lemma_8.2.1} that $T$ is uniform bounded in  $G(\Omega_\vartheta)$.
We omit the index $k$ if
no confusion occurs.

By Lemma \ref{lemma_8.2.1}, $P$ and $Q$ are bounded above.
Since  $\mathbb F_{g_{\vartheta}},$ $\frac{\mathbb D_{g}}{\mathbb D_{f}}$
are uniformly bounded, by Theorem \ref{theorem_7.0.1},
we conclude that  $\mathbb F_{f_{\vartheta}}$ and $W$ is bounded below and above in $G(\Omega_\vartheta)$.
Then, $T$ is bounded above. Since $T$ is $G$-invariant,  by \eqref{eqn_8.1}  we have a constant $C_{1}>0$
such that
$$C_1\inv \leq \chi_1 \leq   \chi_2 \leq
C_1, \;\;\;\;\;\|\nabla \log\mathbb F_{f_\vartheta}\|_{\mathcal G_{f}}\leq C_{1}
$$ where  $\chi_1,\chi_2$ be the
eigenvalues of the matrix $(\sum g^{i \b j}f_{k\b j}).$  Let $\lambda_1,\lambda_2$
(resp. $\mu_1,\mu_2 $)  be the eigenvalue of the matrix  $(f_{i \bar j})$ (resp. the matrix $(g_{i\bar j})$).
Since $C_{2}^{-1}\leq  \mu_1, \mu_2\leq C_{2}$ for some constant $C_{2}>0,$  we can conclude
$$
C_{3}\inv\leq \lambda_1\leq\lambda_2\leq C_{3},\;\;\;\;\;\;\|\nabla
\mathbb F_{f_\vartheta} \|_{C^{1}(\Omega_\vartheta)}\leq C_{3} \;\;\;\;\mbox{ in } \Omega_\vartheta.
$$
By the  bound  of   $\|\nabla\log\mathbb D_{f}\|$,  we have
$$
\|\log  W_f\|_{C^{1}}\leq C_{4}.
$$

By Theorem 6.7 in \cite{CLS-1}, we have, for any $U\subset\subset \Omega_\vartheta$,
$$\|f_\vartheta\|_{C^{6,\alpha}(U)}\leq C_{5}.$$
where $C_{5}$ is a constant depends on $\|A\|_{C^{3}(\Delta)},$ $d_E(U,\Omega_\vartheta)$ and the bound of $\Omega_\vartheta.$

Since $f_{\vartheta}$ is $G$-invariant,  we get the interior regularity $f_{\vartheta}$ in
$G(\Omega_\vartheta)$; namely,
$\phi\indexm$ uniformly $C^{6,\alpha}$-converges to
a function $\phi\in  C^{6,\alpha} (G\times_{K}M)$ with $\mathbb
S(\phi+g)=K\circ \nabla^f$.
Then, $\psi\in C^{6,\alpha}(\bar \Delta) $ satisfies \eqref{eqn_3.1}.
\end{proof}

\end{document}